\documentclass[12pt]{amsart}
\usepackage[utf8]{inputenc}
\usepackage[colorlinks=true]{hyperref}
\usepackage{amsmath,amscd,amsbsy,amssymb,amsthm,amsfonts,array}
\usepackage{pgfplots}
\usepackage{latexsym,url,bm, verbatim}
\usepackage{cite,enumitem}
\usepackage{ulem,xcolor}
\usepackage{tikz}

\pgfplotsset{compat=1.18}

\newcommand{\nP}{{\mathbf P}}

\usepackage{fullpage}

\newtheorem{theorem}{Theorem}[section]
\newtheorem{lemma}[theorem]{Lemma}
\newtheorem{proposition}[theorem]{Proposition}
\newtheorem{corollary}[theorem]{Corollary}
\newtheorem{definition}[theorem]{Definition}

\theoremstyle{remark}
\newtheorem{remark}[theorem]{Remark}
\numberwithin{equation}{section}
\numberwithin{figure}{section}
%\allowdisplaybreaks

\title{Two-dimensional solitary water waves  with constant vorticity, Part II: the deep capillary case}
%Alternative title: Existence of two-dimensional deep capillary solitary water waves with constant vorticity

\author{James Rowan}
\address{Department of Mathematics, University of North Carolina at Chapel Hill}
\curraddr{}
\email{rowanj@unc.edu}

\author{Lizhe Wan}
\address{Department of Mathematics, University of Wisconsin - Madison}
\curraddr{}
\email{lwan33@wisc.edu}

\keywords{solitary waves, constant vorticity, NLS approximation.}
\subjclass[2020]{76B15, 35Q35}
\pagestyle{plain}

\begin{document}

\begin{abstract}
 We consider the two-dimensional capillary water waves with nonzero constant vorticity in infinite depth.
  We first derive the Babenko equation that describes the profile of the solitary wave.
  When the velocity $c$ is close to a critical velocity and a sign condition involving the physical parameters is met, the Babenko equation can be reduced to the stationary focusing cubic nonlinear Schr\"odinger equation plus perturbative error.
  We show the existence of a critical value of a dimensionless physical parameter below which at least two families of velocities satisfy the focusing condition and above which only one does.
  This gives the existence of small-amplitude solitary wave solutions for the water wave system with constant vorticity.
\end{abstract}

\maketitle

\section{Introduction} \label{s:Intro}
We consider two-dimensional  water waves with constant vorticity on deep water without  viscosity.
The fluid occupies a time-dependent domain $\Omega_t \subset \mathbb{R}^2$ which has infinite depth and a free upper boundary $\Gamma_t$ which is asymptotically flat to $y \approx 0$. 
Denoting the fluid velocity by $\mathbf{u}(t,x,y) = (u(t,x,y), v(t,x,y))$, the pressure by $p(t,x,y)$, and the constant vorticity by $\gamma$, the equations inside $\Omega_t$ are
\begin{equation*}
\left\{
             \begin{array}{lr}
            u_t +uu_x +vu_y = -p_x &  \\
            v_t + uv_x +vv_y = -p_y -g& \\
            u_x +v_y =0 & \\
            \omega := u_y -v_x = -\gamma.
             \end{array}
\right.
\end{equation*}
On the boundary $\Gamma_t$ we have the dynamic boundary condition
\begin{equation*}
    p = -\sigma\mathbb{H},
\end{equation*}
and the kinematic boundary condition
\begin{equation*}
    \partial_t +\mathbf{u}\cdot \nabla \text{ is tangent to }\Gamma_t.
\end{equation*}
Here $g$ is the gravitational constant and $\sigma$ represents the strength of the surface tension.
They are nonnegative parameters, and at least one of them is non-zero. 
Let $\eta(x)$ be the graph of the free upper boundary $\Gamma_t$.
$\mathbb{H}$ is the mean curvature of the free boundary and can then be expressed by 
\begin{equation*}
   \mathbb{H}(\eta) = \left(\frac{\eta_x}{\sqrt{1+\eta_x^2}}\right)_x.
\end{equation*}

 \begin{figure}
   \centering

   \begin{tikzpicture}
     \begin{axis}[ xmin=-12, xmax=12.5, ymin=-2.5, ymax=1.5, axis x
       line = none, axis y line = none, samples=100 ]

       \addplot+[mark=none,domain=-10:10,stack plots=y]
       {0.2*sin(deg(2*x))*exp(-x^2/20)};
       \addplot+[mark=none,fill=gray!20!white,draw=gray!30,thick,domain=-10:10,stack
       plots=y] {-1.5-0.2*sin(deg(2*x))*exp(-x^2/20)} \closedcycle;
      \addplot+[black, thick,mark=none,domain=-10:10,stack plots=y]
       {1.5+0.2*sin(deg(2*x))*exp(-x^2/20)};

       \draw[->] (axis cs:-3,-2) -- (axis cs:-3,1) node[left] {\(y\)};
       \draw[->] (axis cs:-11,0) -- (axis cs:12,0) node[below] {\(x\)};
       \filldraw (axis cs:-3,-1.5)  node[above left]
       {\(\)}; \node at (axis cs:0,-0.75) {\(\Omega(t)\)}; \node at
       (axis cs:6,0.3) {\(\Gamma(t)\)};

    \end{axis}
  \end{tikzpicture}
   \caption{The fluid domain.}
 \end{figure}
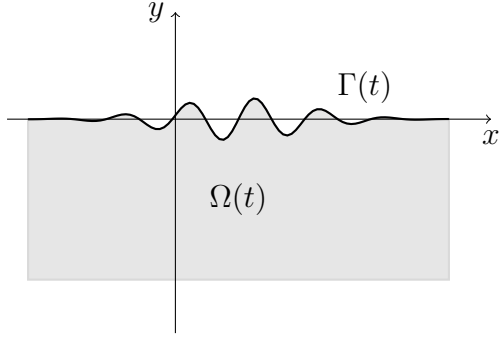

The water wave system is often studied in the celebrated Zakharov-Craig-Sulem formulation, see, for example, Wahl\'en \cite{MR2309783}.
In this paper, we reexpress the water wave system using a different framework.
In the zero surface tension case, the above system  was studied using holomorphic position/velocity potential variables $(W(t,\alpha),Q(t,\alpha))$ by  Ifrim-Tataru \cite{MR3869381} and  Ifrim-Rowan-Tataru-Wan \cite{MR4462478}.
In the presence of nonzero surface tension, the water wave system with constant vorticity has the following form:
\begin{equation}
\left\{
             \begin{array}{lr}
             W_t + (W_\alpha +1)\underline{F} +i\dfrac{\gamma}{2}W = 0 &  \\
             Q_t - igW +\underline{F}Q_\alpha +i\gamma Q +\mathbf{P}\left[\dfrac{|Q_\alpha|^2}{J}\right]- i\dfrac{\gamma}{2}T_1-2\sigma \mathbf{P}\Im\left[ \dfrac{W_{\alpha \alpha}}{J^{\frac 1 2}(1+W_{\alpha})}\right]  =0,&  
             \end{array}
\right.\label{e:CVWW}
\end{equation}
where $J := |1+ W_\alpha|^2$, $\mathbf{P}$ is the projection onto negative frequencies, namely
\begin{equation*}
    \mathbf{P} = \frac{1}{2}(\mathbf{I} - iH),
\end{equation*}
with $H$ denoting the Hilbert transform, and
\begin{equation*}
\begin{aligned}
&F: = \mathbf{P}\left[\frac{Q_\alpha - \Bar{Q}_\alpha}{J}\right], \quad &F_1 = \mathbf{P}\left[\frac{W}{1+\Bar{W}_\alpha}+\frac{\Bar{W}}{1+W_\alpha}\right],\\
&\underline{F}: =F- i \frac{\gamma}{2}F_1,  \quad &T_1: = \mathbf{P}\left[\frac{W\Bar{Q}_\alpha}{1+\Bar{W}_\alpha}-\frac{\Bar{W}Q_\alpha}{1+W_\alpha}\right].
\end{aligned}
\end{equation*}
We refer interested readers to Appendix $B$ of \cite{MR3869381} and Section $1$ of \cite{MR3667289} for the derivation of the water wave system \eqref{e:CVWW} in the constant vorticity  and the capillary setting, respectively; this equation is a combination of the terms from those two effects.

A well-posedness result in the periodic setting for the constant vorticity water wave system with both gravity and surface tension is due to Berti-Maspero-Murgante \cite{MR4658635}.
They showed the almost global-in-time existence of small amplitude solutions for \eqref{e:CVWW} for almost all choices of positive $\sigma$.

The system \eqref{e:CVWW} does not have a complete spacetime scaling symmetry.
As a consequence, it is impossible to rescale $g, \gamma$, and $\sigma$ to $1$ at the same time.
We thus keep them in this article as fixed parameters  to help organize terms of various scalings, except in the final section where we nondimensionalize the equations.

The zero solution is a trivial solution of \eqref{e:CVWW}, but we are interested in the existence and other properties of nontrivial solutions.
A natural question in any water wave model is the existence and properties of solitary wave solutions.
In other words, are there any nontrivial solutions of \eqref{e:CVWW} having the form
\begin{equation}
    \left(W(t,\alpha), Q(t,\alpha)\right) = \left(W(\alpha + ct), Q(\alpha + ct)\right), \qquad \lim_{\alpha \rightarrow \infty} (W(\alpha), Q(\alpha))= (0,0) \label{Form}
\end{equation}
for $t\geq 0$? 

We remark that the requirement that the solution vanishes at infinity excludes the possibility of periodic or quasi-periodic traveling wave solutions.
These types of special solutions of \eqref{e:CVWW} were studied by Wahl\'en \cite{MR2262949}, Martin \cite{MR2969824} and Berti-Franzoi-Maspero  \cite{MR4228858}.

To our knowledge, the known historical results for the existence/nonexistence of deep solitary waves with constant vorticity in two space dimensions are summarized in Table \ref{table:1} below.
\begin{table}[htb]   
\begin{center}   
\caption{Existence/Nonexistence of 2D deep solitary waves with constant vorticity}  
\label{table:1} 
\begin{tabular}{|m{2.3cm}<{\centering}|m{3.6cm}<{\centering}|m{3.8cm}<{\centering}|m{3.8cm}<{\centering}|}   
\hline   \textbf{Gravity} & \textbf{Surface tension} & \textbf{Constant vorticity} & \textbf{Existence} \\   
\hline   Yes & No & Zero  & No \cite{MR2993054, MR4151415} \\ 
\hline  No & Yes & Zero  & No \cite{MR4151415} \\  
\hline   Yes & Yes & Zero  & Yes  \cite{MR2069635, MR2847283, MR1423002}\\ 
\hline   Yes & No & Nonzero  & Yes \cite{rowan2023dimensional} \\
\hline   No & Yes & Nonzero  & Unknown till ours\\
\hline   Yes & Yes & Nonzero  & Unknown till ours \\
\hline   
\end{tabular}   
\end{center}   
\end{table}

In the case of zero vorticity $\gamma = 0$, it was shown by Hur \cite{MR2993054} and Ifrim-Tataru  \cite{MR4151415} that no solitary waves exist for  pure gravity cases. 
Ifrim-Tataru further showed that no pure capillary solitary wave exists in \cite{MR4151415}.
When both gravity and surface tension are present, solitary waves are proved to exist by Iooss-Kirrmann \cite{MR1423002}, Buffoni \cite{MR2069635}, and Groves-Wahl\'{e}n \cite{MR2847283}.

Recently, the authors constructed  deep pure gravity solitary waves in the presence of nonzero constant  vorticity~\cite{rowan2023dimensional}.
In addition to the existence result, an asymptotic description of the solitary waves profile was given, leveraging the simpler algebraic structure of working in holomorphic coordinates.
This article gives an affirmative answer to the last two unknown rows in the table above.

To give an  overview of the results of 2D solitary water waves, we also recall the historical results for the finite depth case here.
In the case of zero vorticity $\gamma = 0$,  the existence of pure gravity solitary  waves in finite depth was shown by Friedrichs and Hyers \cite{MR65317} in 1954, followed by results of Beale \cite{MR445136}, Amick-Toland \cite{MR629699} and Plotnikov \cite{MR1133302}.
As for the non-gravity case,
Ifrim-Pineau-Tataru-Taylor showed in \cite{MR4455193} that there is no solitary wave for the pure capillary finite depth case.  
When both gravity and surface tension are present, solitary waves are proven to exist, see Amick-Kirchg\"{a}ssner \cite{MR963906}, Buffoni \cite{MR2073504, MR2133392}, Buffoni-Groves \cite{MR1378603} and Groves \cite{MR4246394}.
In the case of nonzero constant vorticity, Kozlov, Kuznetsov, and Lokharu proved the existence of the pure-gravity case in \cite{MR4164801}.
Groves and Wahl\'{e}n showed in \cite{MR3415532} the existence of solitary waves in the gravity-capillary setting for small momentum.

In this article, we prove the existence of solitary waves for the remaining 2D cases, deep gravity-capillary case and deep pure capillary case with nonzero constant vorticity.
Before stating the exact existence result, we give some heuristics by looking at the linearization equations of the water wave \eqref{e:CVWW} around the zero solution.
The linearized system is given by 
\begin{equation}
\left\{
             \begin{array}{lr}
             w_t + q_\alpha = 0 &  \\
             q_t +i\gamma q - igw +i\sigma w_{\alpha \alpha} =0,&  
             \end{array}
\right.\label{e:ZeroLinear}
\end{equation}
restricted to holomorphic functions.
\eqref{e:ZeroLinear} is a linear dispersive equation that can be written as
\begin{equation*}
    w_{tt}+ i \gamma w_t + igw_\alpha - i\sigma w_{\alpha \alpha \alpha} =0.
\end{equation*}

Its dispersion relation is given by
\begin{equation*}
    \tau^2 + \gamma\tau +g\xi +\sigma \xi^3 =0, \quad \xi\leq 0.
\end{equation*}
Suppose the \eqref{e:ZeroLinear} has a linear wave solution of frequency $k< 0$ and velocity $c$, i.e, $e^{ik(\alpha+ ct)}$, then it must satisfy the relation
\begin{equation*}
    \sigma k^2 +c^2 k + (g+c\gamma) =0, \quad k< 0.
\end{equation*}
Viewing the velocity $c$ as a function of frequency $k$, we get
\begin{equation}
    c^\pm(k) = \dfrac{-\gamma \pm \sqrt{\gamma^2 -4k(g+\sigma k^2)}}{2k}, \quad k< 0. \label{Defcpm}
\end{equation}
A rough graph of $c^{\pm}(k)$ is given in Figure \ref{f:Cplusminus}.

Direct computation gives that $c^{+}(k)$ is negative, with a unique global maximum  $c^{+}(\omega_1) = c_1$ and $c^{-}(k)$ is positive, with a unique global minimum  $c^{-}(\omega_2) = c_2$, where $c_1 < 0< c_2$ are the only two real roots of the equation of the discriminant
\begin{equation}
c^4 = 4 \sigma (g+c\gamma), \quad g+c\gamma>0. \label{RangeOfC}
\end{equation}
Note that at these two critical velocities,
\begin{equation*}
 \sigma k^2 +c_1^2 k + (g+c_1\gamma) \geq 0, \quad  \sigma k^2 +c_2^2 k + (g+c_2 \gamma) \geq 0.   
\end{equation*}

 \begin{figure} 
  \centering

  \begin{tikzpicture}
     \begin{axis}[ xmin=-6.5, xmax=1.5, ymin=-6.5, ymax=6.5, axis x
       line = none, axis y line = none, samples=200 ]
      %% \gamma =1, g=1, \sigma=2
       \addplot+[mark=none,draw=black,domain=-5:-2.5]
       {(-1+(1-4*1*x*(1+2*x^2))^(1/2))/(2*1*x)} node[below,text=black] {\(c^+(\omega)\)};
       \addplot+[mark=none,draw=black,domain=-2.5:-0.01]
       {(-1+(1-4*1*x*(1+2*x^2))^(1/2))/(2*1*x)};
        \addplot+[mark=none,domain=-5:-2.5]
       {(-1-(1-4*1*x*(1+2*x^2))^(1/2))/(2*1*x)} node[above] {\(c^-(\omega)\)};
       \addplot+[mark=none,domain=-2.5:-0.01]
       {(-1-(1-4*1*x*(1+2*x^2))^(1/2))/(2*1*x)};
       \draw [dashed] (axis cs:-0.208,-0.913) -- (axis cs:-0.208,0) node[above] {\(\omega_1\)};
       \draw [dashed] (axis cs:-1.277,2.26) -- (axis cs:-1.277,0) node[below] {\(\omega_2\)};
       \draw[->] (axis cs:0,-5.5) -- (axis cs:0,5.5) node[right] {\(c\)};
       \draw[->] (axis cs:-5.5,0) -- (axis cs:0.5,0) node[below] {\(k\)};
     \end{axis}
  \end{tikzpicture}
   \caption{The graphs of $c^{\pm}(k)$, with critical points at frequencies $\omega_1$ and $\omega_2$.} \label{f:Cplusminus}
 \end{figure}
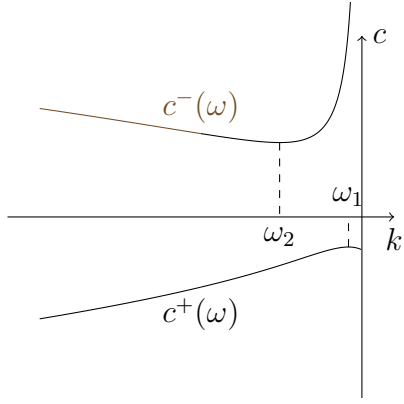 %%Note: in this picture, c, g, and \sigma are all equal to 1

By Section $3$ of Dias and Kharif \cite{MR1670945}, when the linear group and phase velocities are equal so that $c^{'}(k) = 0$, one can expect bifurcations of nonlinear solitary waves.
Therefore, we may expect the existence of small-amplitude solitary waves whose velocity is either $c_1+\epsilon^2$ or $c_2-\epsilon^2$ and bifurcate from a linear periodic wave  with frequency either $\omega_1$ or $\omega_2$.
Indeed, our main theorem below states that under such conditions, the solitary waves of \eqref{e:CVWW} exist, and gives a quantitative NLS approximation result.

\begin{theorem} \label{t:TheoremOne} 
Given any nonzero constant vorticity $\gamma$, positive capillary constant $\sigma$, and positive gravity constant $g$, let $c_1< 0<c_2$ be two roots of the equation \eqref{RangeOfC}.
If $c = c_1+\epsilon^2$ or $c = c_2 -\epsilon^2$ for some small constant $\epsilon$, then the system \eqref{e:CVWW} has a unique small  $H^\infty$ solitary waves solution with velocity $c$ as long as the corresponding value of $\omega$ as illustrated in Figure \ref{f:Cplusminus} satisfies the focusing condition
\begin{equation}
    \frac{\sigma}{2}|\omega|^3-\gamma^2>0.\label{e:focusingConditionThm}
\end{equation} 
Moreover, $W$ is of the following type:
\begin{equation*}
     W(\alpha)  = \pm 2\epsilon \rho_* (\epsilon \alpha) e^{i\omega \alpha} + o_{H^1}(\epsilon),
\end{equation*}
where $\pm \rho_*(\beta)$ are nontrivial real-valued even solutions of the stationary focusing cubic nonlinear Schr\"odinger (NLS) equation
\begin{equation}
    \left(\sqrt{\gamma^2-4\omega(g+\sigma \omega^2)}-\sigma \partial_{\beta}^2\right)\rho - \left(\frac{3\sigma}{2}|\omega|^4-\gamma^2|\omega|\right)|\rho|^2 \rho =0. \label{CubicShrodingerEqn}
\end{equation}
The explicit formula of $\rho_*$ is given in \eqref{FormulaRho}.
In addition, $\Im Q$ is determined by \eqref{e:BabenkoQ}, and $Q = 2\nP \Im Q$.
\end{theorem}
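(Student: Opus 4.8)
I would first substitute the traveling ansatz \eqref{Form} into \eqref{e:CVWW}: the first equation becomes an algebraic relation that determines $\underline F$ in terms of $W$, and since $W,Q$ are holomorphic it also yields $\Im Q$ — this is \eqref{e:BabenkoQ} — hence $Q=2\mathbf{P}\,\Im Q$. Feeding these into the second equation of \eqref{e:CVWW} produces a single nonlocal scalar equation for $W$,
$$\mathcal{L}_c W=\mathcal{N}(W)=\mathcal{Q}(W)+\mathcal{C}(W)+\cdots,$$
where $\mathcal{L}_c$ is the Fourier multiplier with symbol $m_c(k)=\sigma k^2+c^2k+(g+c\gamma)$ on negative frequencies and $\mathcal{Q},\mathcal{C},\dots$ are the quadratic, cubic, and higher terms built from $W$, $\overline W$ and the nonlocal operators $\mathbf{P},H$. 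The algebraic key is that at $c=c_j$ the symbol factors as $m_{c_j}(k)=\sigma(k-\omega_j)^2$, so it degenerates to second order at the critical frequency $\omega_j$; for $c=c_1+\epsilon^2$ or $c=c_2-\epsilon^2$ one has $m_c(k)=\sigma(k-\omega)^2+\epsilon^2(2c_j\omega+\gamma)+O(\epsilon^2|k-\omega|)+O(\epsilon^4)$ near $k=\omega$, and since $c_j=c^{\pm}(\omega_j)$ a short computation gives $2c_j\omega_j+\gamma=\pm\sqrt{\gamma^2-4\omega_j(g+\sigma\omega_j^2)}$ with the sign tied to the branch, so that on both branches the effective detuning is $+\epsilon^2\sqrt{\gamma^2-4\omega(g+\sigma\omega^2)}$.

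\textbf{Step 2: Formal NLS reduction.} Next I would insert the two-scale ansatz $W(\alpha)=2\epsilon\,\rho(\epsilon\alpha)e^{i\omega\alpha}+\epsilon^2W_2+\epsilon^3W_3+\cdots$, where $W_2$ carries the second-harmonic ($e^{2i\omega\alpha}$) and near-zero-frequency (mean-mode) content forced by the quadratic self-interaction of the leading term; since $\sigma>0$ and $\gamma\ne0$ one has $m_{c_j}(2\omega_j)=m_{c_j}(0)=\sigma\omega_j^2>0$, so $\mathcal{L}_{c_j}$ is non-degenerate there and $W_2$ is uniquely fixed by $\rho$. Collecting the resonant $O(\epsilon^3e^{i\omega\alpha})$ terms, the degenerate part of $\mathcal{L}_c$ gives $-\sigma\epsilon^2\rho''(\epsilon\alpha)$, the detuning gives $+\epsilon^2\sqrt{\gamma^2-4\omega(g+\sigma\omega^2)}\,\rho$, and the resonant cubic contributions — the direct $W^2\overline W$ interaction plus the cascade through the second-harmonic and mean-mode parts of $W_2$ — combine to $-\epsilon^2\big(\tfrac{3\sigma}{2}|\omega|^4-\gamma^2|\omega|\big)|\rho|^2\rho$, up to an $O(\epsilon^3)$ remainder; dividing by $\epsilon^2$ recovers \eqref{CubicShrodingerEqn}. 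The sign hypothesis \eqref{e:focusingConditionThm} is exactly what makes the reduced cubic coefficient — and the auxiliary constants entering $W_2$ and the remainder bounds — have the signs rendering \eqref{CubicShrodingerEqn} a genuine focusing NLS, whose even, exponentially decaying soliton $\rho_*(\beta)\propto\mathrm{sech}(\kappa\beta)$ from \eqref{FormulaRho} is the leading profile; the $\pm$ reflects $\rho_*\mapsto-\rho_*$.

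\textbf{Step 3: Rigorous construction.} I would then work with the slow variable $\beta=\epsilon\alpha$ and exponentially weighted Sobolev spaces adapted to $\rho_*$, writing $W$ as the NLS profile $\pm2\epsilon\,\rho_*(\epsilon\alpha)e^{i\omega\alpha}$ plus the forced higher harmonics and mean mode (obtained by inverting $\mathcal{L}_{c_j}$ at frequencies near $2\omega,3\omega,\dots$ and near $0$, where $m_{c_j}=\sigma\omega_j^2\ne0$) plus a remainder $R$. The equation for $R$ has linear part $\mathcal{L}_c$; in the resonant window (frequencies $O(\epsilon)$-close to $\omega$) this is, after rescaling, a small perturbation of the linearization of \eqref{CubicShrodingerEqn} about $\rho_*$, whose kernel — the translation mode $\rho_*'$ and, at the NLS level, a spurious phase mode — is removed by imposing the reflection symmetry compatible with \eqref{e:CVWW} and the phase normalization of the ansatz; away from that window $\mathcal{L}_c$ is elliptic and invertible, the mild loss in its inverse there being offset by the smallness of the residual. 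A contraction mapping then produces a unique small $R$ for each small $\epsilon>0$ and each sign. Finally, since $m_c(k)\sim\sigma k^2$ as $|k|\to\infty$, $\mathcal{L}_c$ is elliptic of order two and elliptic bootstrapping upgrades the solution to $H^\infty$, while $Q=2\mathbf{P}\,\Im Q$ with $\Im Q$ given by \eqref{e:BabenkoQ} completes the proof.

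\textbf{Main obstacle.} The hard part will be the uniform-in-$\epsilon$ analysis of the linearized Babenko operator in the resonant regime together with the bookkeeping of $\mathbf{P}$ and $H$ applied to products of the modulated packet $\rho(\epsilon\alpha)e^{i\omega\alpha}$: one must cleanly separate the genuinely resonant $O(\epsilon^3)$ contributions to the cubic coefficient — including the second-harmonic/mean-mode cascade — from non-resonant remainders, and verify that \eqref{e:focusingConditionThm} is precisely the condition forcing the reduced coefficient to be of focusing sign. Because the Hilbert transform does not commute with multiplication, disentangling the two-scale structure from the nonlocality is the principal technical difficulty; the remaining fixed-point analysis should be comparatively routine.
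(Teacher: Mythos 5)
Your outline follows the same overall strategy as the paper — reduce to a Babenko-type scalar equation whose linear symbol degenerates to $\sigma(k-\omega)^2$ at the critical velocity, extract a stationary cubic NLS for the envelope near the resonant frequency, and close with an implicit-function/contraction argument perturbing off the sech soliton — but the implementation differs at two points. First, the paper derives the real-valued Babenko equation \eqref{e:BabenkoEqnLR} for $U=\Im W$ variationally (critical point of the energy at fixed momentum, with $c$ the Lagrange multiplier), which yields \eqref{e:BabenkoQ} for free, rather than by direct substitution of the ansatz into \eqref{e:CVWW}; this is cosmetic. Second, in place of your multiple-scales expansion $W=2\epsilon\rho e^{i\omega\alpha}+\epsilon^2W_2+\cdots$ followed by Lyapunov--Schmidt in weighted spaces, the paper runs Groves' two-step scheme: a first implicit function theorem (Lemma \ref{t:Implicit}) solves for the non-resonant part $U_2=(1-\chi(D))U$ as a smooth function of the frequency-localized part $U_1$, using the scaled norm $\mathfrak{E}_\omega$ and the space $\mathcal{Z}$; a second one (Proposition \ref{t:ImplicitTwo}) passes to the limit $\epsilon\to 0$ of the rescaled equation \eqref{e:FocusingODE} and perturbs off $\pm\rho_*$ in $H^1_e(\mathbb{R},\mathbb{C})=\{\rho(\beta)=\overline{\rho(-\beta)}\}$ — which is exactly your ``reflection symmetry plus phase normalization'' device for killing the translation and phase modes. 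Both routes are viable; the paper's buys uniform-in-$\epsilon$ control without weighted spaces, yours is closer to the classical modulation picture.

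The one place where your proposal is not merely a reorganization is the computation of the cubic coefficient, and there you assert rather than derive the key identity. You state that the direct $W^2\overline W$ interaction \emph{plus} the cascade through the second-harmonic and mean-mode parts of $W_2$ ``combine to'' $-\bigl(\tfrac{3\sigma}{2}|\omega|^4-\gamma^2|\omega|\bigr)|\rho|^2\rho$. In the paper's accounting this coefficient comes \emph{entirely} from the direct cubic terms of \eqref{e:BabenkoEqnLR} (see \eqref{ApproximateThree}): the quadratic terms are shown to be non-resonant after applying $\chi^+(D)$ (equation \eqref{ApproximateTwo}), and the induced correction $U_2$ is estimated as $O(\epsilon\|U_1\|^2_{\mathfrak{E}_\omega})$ so that all quadratic-times-$U_2$ cascade terms land in the $\underline{\mathcal{O}}_+(\epsilon^{3/2}\|U_1\|^3_{\mathfrak{E}_\omega})$ remainder of \eqref{UPlusOneReduced}. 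A generic second-harmonic forcing of size $O(\epsilon^2)$ would feed back at the same $O(\epsilon^3)$ order as the direct cubic term, so you cannot wave at the cascade and quote the paper's coefficient: you must either compute the $e^{2i\omega\alpha}$ and mean-mode components of $W_2$ explicitly and verify that their contribution to the resonant balance vanishes (or reproduces exactly the stated coefficient), or prove the improved smallness of $U_2$ that the paper relies on. This computation — not the fixed-point analysis — is the crux of the proof, and as written your proposal leaves it open.
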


As a special case where the fluid is located in a place where the gravity is negligible or at a scale where the capillary effects dominate, we have the following result for pure capillary solitary waves with nonzero constant vorticity:
\begin{corollary} 
Let the constant vorticity $\gamma\neq 0$,   gravity constant $g=0$, and let $c_* = \sqrt[3]{4\sigma\gamma}$.
The system \eqref{e:CVWW} has a unique small $H^\infty$ solitary waves solution of velocity  $c = c_*+\epsilon^2$ if $\gamma<0$ or $c = c_* -\epsilon^2$ if $\gamma>0$ for some small constant $\epsilon$.  
Moreover, $W$ is of the following type:
\begin{equation*}
     W(\alpha)  = \pm 2\epsilon \rho_* (\epsilon \alpha)e^{i\omega \alpha} + o_{H^1}(\epsilon),
\end{equation*}
where $\omega=-\left(2\gamma^2\sigma^{-1}\right)^{\frac 1 3}$ and the functions $\pm \rho_*(\beta)$ are  nontrivial real-valued even solutions of the stationary focusing  NLS equation
\begin{equation*}
   \left (\sqrt{\gamma^2-4\sigma\omega^3}-\sigma \partial_{\beta}^2\right)\rho - \left(\frac{3\sigma}{2}|\omega|^4-\gamma^2|\omega|\right)|\rho|^2 \rho =0.
\end{equation*}
The explicit formula of $\rho_*$ is given in \eqref{FormulaRho}.
In addition, $\Im Q$ is determined by \eqref{e:BabenkoQ}, and $Q = 2\nP \Im Q$.
\end{corollary}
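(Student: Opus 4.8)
The plan is to obtain this statement as the specialization of Theorem~\ref{t:TheoremOne} to the degenerate case $g=0$. Once that is set up, essentially all that remains is to carry out the explicit computations identifying the critical velocity $c_*$, the associated extremal frequency $\omega$, and the resulting constant-coefficient Schr\"odinger equation, and then to confirm that the reduction underlying Theorem~\ref{t:TheoremOne} still applies when $g=0$.

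First I would locate $c_*$ from the discriminant equation \eqref{RangeOfC}. With $g=0$ it becomes $c^4=4\sigma\gamma c$ subject to $c\gamma>0$; since $c=0$ is excluded by the sign constraint, this is equivalent to $c^3=4\sigma\gamma$, whose unique real root is $c_*=\sqrt[3]{4\sigma\gamma}$, which has the same sign as $\gamma$, so that $c_*\gamma>0$ holds automatically. When $\gamma>0$, this $c_*$ is the global minimum $c_2$ of the branch $c^-$, and Theorem~\ref{t:TheoremOne} is applied with $c=c_*-\epsilon^2$; when $\gamma<0$, it is the global maximum $c_1$ of $c^+$, and one takes $c=c_*+\epsilon^2$. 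The associated frequency is the double root of the linear dispersion relation $\sigma k^2+c_*^2k+c_*\gamma=0$, namely $\omega=-c_*^2/(2\sigma)$; using $c_*^2=(4\sigma\gamma)^{2/3}$ this simplifies to $\omega=-(2\gamma^2\sigma^{-1})^{1/3}<0$, which is the value indicated in Figure~\ref{f:Cplusminus}.

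Next I would substitute this $\omega$ into the data of \eqref{CubicShrodingerEqn}. Since $|\omega|^3=2\gamma^2/\sigma$, one computes $\gamma^2-4\sigma\omega^3=\gamma^2+8\gamma^2=9\gamma^2$, so $\sqrt{\gamma^2-4\sigma\omega^3}=3|\gamma|$, and $\frac{3\sigma}{2}|\omega|^4-\gamma^2|\omega|=|\omega|\bigl(\frac{3\sigma}{2}|\omega|^3-\gamma^2\bigr)=2\gamma^2|\omega|$. Hence \eqref{CubicShrodingerEqn} becomes the constant-coefficient focusing cubic NLS $\bigl(3|\gamma|-\sigma\partial_\beta^2\bigr)\rho-2\gamma^2|\omega|\,|\rho|^2\rho=0$, which admits the real-valued even $\mathrm{sech}$-profile solutions $\pm\rho_*$ recorded in \eqref{FormulaRho}. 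The description of $W$, the identity \eqref{e:BabenkoQ} for $\Im Q$, and the relation $Q=2\nP\Im Q$ are then inherited directly from Theorem~\ref{t:TheoremOne}.

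The one genuinely delicate point is that the pure capillary frequency $\omega=-(2\gamma^2\sigma^{-1})^{1/3}$ lies exactly on the boundary of the sufficient condition \eqref{e:focusingConditionThm}: since $|\omega|^3=2\gamma^2/\sigma$ one has $\frac{\sigma}{2}|\omega|^3-\gamma^2=0$, so \eqref{e:focusingConditionThm} holds only with equality, not as a strict inequality. I would resolve this by returning to the proof of Theorem~\ref{t:TheoremOne} and checking that the step reducing the Babenko equation to a \emph{focusing} NLS genuinely requires only the strict positivity of the cubic coefficient $\frac{3\sigma}{2}|\omega|^4-\gamma^2|\omega|$, which here equals $2\gamma^2|\omega|>0$, so the construction survives this limiting case. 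Equivalently, a short perturbative computation shows that for small $g>0$ one has $\frac{\sigma}{2}|\omega|^3-\gamma^2\approx 2\gamma g/c_*>0$ along the surviving branch, so that Theorem~\ref{t:TheoremOne} already furnishes the solitary waves for small positive gravity and the corollary follows as the $g\to0$ endpoint. I expect this verification of the marginal case to be the only real obstacle; apart from it, no new estimates are required, since all the substantive analysis is that of Theorem~\ref{t:TheoremOne}.
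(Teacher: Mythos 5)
Your proposal is correct and follows essentially the same route as the paper: the corollary is obtained by specializing the machinery of Theorem \ref{t:TheoremOne} to $g=0$, computing $c_*=\sqrt[3]{4\sigma\gamma}$ and $\omega=-(2\gamma^2\sigma^{-1})^{1/3}$, and checking that the cubic coefficient $\frac{3\sigma}{2}|\omega|^4-\gamma^2|\omega|=2\gamma^2|\omega|$ is strictly positive, exactly as the paper does in its remark at the end of Section \ref{s:Approximate} (where $\frac{3\sigma}{2}|\omega|^3=\frac{3c_*^6}{16\sigma^2}\geq 3\gamma^2$ whenever $c_*\gamma>0$, with equality precisely when $g=0$). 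Your observation that condition \eqref{e:focusingConditionThm} as literally written, $\frac{\sigma}{2}|\omega|^3-\gamma^2>0$, holds only with equality in the pure capillary case is a correct and worthwhile catch: the condition actually used in \eqref{UPlusOneReduced} and in Section \ref{s:parameterRegimes} is $\frac{3\sigma}{2}|\omega|^3-\gamma^2>0$, so the coefficient $\frac{\sigma}{2}$ in the theorem statement appears to be a misprint, and your resolution via strict positivity of the true cubic coefficient is precisely how the paper's argument covers this marginal case.
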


\begin{remark}
We make the following remarks about the main results.
\begin{enumerate}
\item To see why the equation \eqref{RangeOfC} has exactly two roots, we consider the function 
\begin{equation*}
   f(x) = x^4 -4\sigma \gamma x -4\sigma g. 
\end{equation*}
Then $f^{'}(x) = 4x^3 - 4\sigma\gamma$, so that $f(x)$ is decreasing on $(-\infty, \sqrt[3]{\sigma\gamma})$, and $f(x)$ is increasing on $(\sqrt[3]{\sigma\gamma}, \infty)$.
Also $f(0) = -4\sigma g <0$, and $f(x)$ is positive when $|x|$ is large.
  The range of velocity $c$ satisfies the sign condition
\begin{equation}
    g+c\gamma >0, \label{SignCon1}
\end{equation}
as well as the coercivity condition
\begin{equation}
   c^4<  4\sigma (g+c\gamma). \label{CoerciveCon1}
\end{equation}
These two conditions ensure that the Babenko equation \eqref{e:BabenkoEqnLR} that describes the profile of solitary waves is a second-order quasilinear elliptic equation.
\item In the zero surface tension case $\sigma =0$, the Babenko equation \eqref{e:BabenkoEqnLR} degenerates to a first-order elliptic equation.
The sign conditions for capillary and pure gravity  water waves with constant vorticity are different.
In the pure gravity case, we assume $g+c\gamma<0$ as in \cite{rowan2023dimensional}.
This is to be expected, due to the different convexities of the dispersion relations for the equations linearized around the zero solution.
In both cases, the sign condition ensures that the solitary wave velocity is not an allowed group velocity for linear dispersive waves.
\item Our result also holds in the zero vorticity case $\gamma = 0$, where the profile of the solitary waves can be approximated by a  stationary focusing NLS equation, which is consistent with the result in Groves-Wahl\'{e}n \cite{MR2847283}.
\item  The condition~\eqref{e:focusingConditionThm} is always satisfied when $c$ is near whichever of $c_1$ and $c_2$ is larger in absolute value, so that there is always at least one family of solitary wave solutions.
\end{enumerate}
\end{remark}

Next, we discuss further the number of possible families of gravity-capillary solitary waves with nonzero constant vorticity. 
The minimum number of families depends on the values of the physical parameters $g,$ $\sigma$, and $\gamma$:
\begin{proposition}
    Define the dimensionless parameter $V=\frac{\sigma \gamma^4}{g^3}$.
    There exists a critical value $V_*\approx 0.11034$ such that if $V<V_*$, both possible velocities $c_1$ and $c_2$ satisfy condition~\eqref{e:focusingConditionThm}, so that there are at least two families of solitary waves, while if $V>V_*$, only the family near the velocity with the same sign as the vorticity satisfies the condition \eqref{e:focusingConditionThm}.\label{p:RangeOfV}
\end{proposition}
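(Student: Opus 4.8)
The plan is to turn the focusing condition into a single scalar inequality in the critical velocity and then locate the threshold value of $V$ explicitly. First I would record that at either critical velocity the quadratic $\sigma k^{2}+c^{2}k+(g+c\gamma)=0$ has a double root, namely $\omega=-c^{2}/(2\sigma)$, so $|\omega|=c^{2}/(2\sigma)$ and $|\omega|^{3}=c^{6}/(8\sigma^{3})$. (At $c=c_{1}$ or $c=c_{2}$ one has equality in \eqref{RangeOfC}, so $g+c\gamma=c^{4}/(4\sigma)>0$ and \eqref{CoerciveCon1} degenerates to an equality; thus the admissibility constraints are automatic and the focusing condition is the only extra requirement.) Substituting $|\omega|=c^{2}/(2\sigma)$ into \eqref{e:focusingConditionThm} — equivalently, into the positivity of the cubic coefficient $\tfrac{3\sigma}{2}|\omega|^{4}-\gamma^{2}|\omega|$ of the limiting NLS \eqref{CubicShrodingerEqn} — recasts it as $|c|^{3}>\kappa\,\sigma|\gamma|$ with the explicit dimensionless constant $\kappa=4/\sqrt{3}$. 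Finally, since the substitution $(c,\gamma)\mapsto(-c,-\gamma)$ preserves \eqref{RangeOfC}, $|\omega|$, the condition \eqref{e:focusingConditionThm}, and $V=\sigma\gamma^{4}/g^{3}$, while interchanging $c_{1}$ and $c_{2}$, it suffices to treat $\gamma>0$; then $c_{2}>0$ is the root whose sign agrees with $\gamma$.

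Next I dispose of the $c_{2}$-family. Since $c_{2}$ solves \eqref{RangeOfC}, $c_{2}^{4}=4\sigma(g+c_{2}\gamma)$, hence $c_{2}^{3}=4\sigma\gamma+4\sigma g/c_{2}>4\sigma\gamma\ge\kappa\sigma\gamma$ because $\kappa\le 4$ and $g,\sigma,c_{2}>0$. So \eqref{e:focusingConditionThm} holds at $c_{2}$ for every admissible choice of parameters, which already yields the ``at least one family'' assertion. (As a side check consistent with the Remark, comparing $f(-c_{2})$ with $f(c_{2})=0$ for $f(x)=x^{4}-4\sigma\gamma x-4\sigma g$ gives $f(-c_{2})=8\sigma\gamma c_{2}>0$, and since $f$ is decreasing on $(-\infty,(\sigma\gamma)^{1/3})$ this forces $-c_{2}<c_{1}$, i.e. $c_{2}>|c_{1}|$: the sign-of-$\gamma$ velocity is the one of larger modulus.)

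The substance is the $c_{1}$-family. Write $s:=|c_{1}|=-c_{1}>0$; then \eqref{RangeOfC} reads $s^{4}+4\sigma\gamma s=4\sigma g$ (so $g-\gamma s=s^{4}/(4\sigma)>0$), and \eqref{e:focusingConditionThm} reads $s^{3}>\kappa\sigma\gamma$. Eliminating $\sigma$ via $4\sigma=s^{4}/(g-\gamma s)$ reduces the focusing condition to the elementary inequality $s<s_{*}:=\dfrac{4g}{(4+\kappa)\gamma}$. Because $\Phi(s):=s^{4}+4\sigma\gamma s-4\sigma g$ is strictly increasing on $(0,\infty)$ with $\Phi(0)=-4\sigma g<0$, its unique positive zero is $s=|c_{1}|$, so $|c_{1}|<s_{*}$ holds if and only if $\Phi(s_{*})>0$. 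A direct computation gives
\[
\Phi(s_{*})=\frac{256\,g^{4}}{(4+\kappa)^{4}\gamma^{4}}-\frac{4\kappa\,\sigma g}{4+\kappa},
\]
which is positive precisely when $\dfrac{\sigma\gamma^{4}}{g^{3}}<\dfrac{64}{\kappa(4+\kappa)^{3}}=:V_{*}$. Plugging in $\kappa=4/\sqrt{3}$ yields $V_{*}=\dfrac{9}{4(1+\sqrt{3})^{3}}=\dfrac{9}{40+24\sqrt{3}}\approx 0.11034$. Combining the two families: for $V<V_{*}$ both $c_{1}$ and $c_{2}$ satisfy \eqref{e:focusingConditionThm} (two families), while for $V>V_{*}$ only $c_{2}$ does, and $c_{2}$ is the velocity with the sign of $\gamma$; the case $\gamma<0$ follows by the reflection symmetry above. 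This is exactly the assertion of Proposition~\ref{p:RangeOfV}.

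I do not expect a genuine analytic obstacle here; the work is entirely algebraic bookkeeping. The two places to be careful are: (i) verifying that ``the $\omega$ corresponding to $c$ in Figure~\ref{f:Cplusminus}'' is the double root $-c^{2}/(2\sigma)$ for both $c_{1}$ and $c_{2}$ (rather than a value on some other branch of $c^{\pm}$), and keeping the identity $|\omega|=c^{2}/(2\sigma)$ consistent with the NLS coefficients in \eqref{CubicShrodingerEqn}; and (ii) handling the sign of $\gamma$ uniformly via the reflection symmetry so that the threshold $V_{*}$ — a function of $V$ alone — comes out the same in both cases. Locating $s_{*}$ and evaluating $\Phi(s_{*})$ is the only computation of any length, and it is routine.
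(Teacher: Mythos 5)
Your proposal is correct, and it takes a genuinely different — and in fact more elementary — route than the paper. The paper nondimensionalizes, expresses the critical frequency $\tilde\omega_+$ by radicals as a root of the quartic $V^2k^4-2Vk^2+2Vk+1=0$, handles the regimes $V>10^6$ and $V<10^{-6}$ by asymptotic bounds, and then locates $V_*$ on the intermediate range by an interval-arithmetic Newton method, obtaining only the numerical enclosure $V_*\in[0.110335,0.110336]$. You instead exploit the observation (which the paper itself records in Section 4 but does not use here) that at a critical velocity the dispersion quadratic has the double root $\omega=-c_*^2/(2\sigma)$, so the focusing condition $\tfrac{3\sigma}{2}|\omega|^3>\gamma^2$ collapses to $|c_*|^3>\tfrac{4}{\sqrt3}\sigma|\gamma|$; eliminating $\sigma$ via $c_*^4=4\sigma(g+c_*\gamma)$ and using the strict monotonicity of $\Phi(s)=s^4+4\sigma\gamma s-4\sigma g$ then yields the \emph{closed form} $V_*=\frac{9}{4(1+\sqrt3)^3}=\frac{9}{40+24\sqrt3}\approx 0.110336$, which lies exactly in the paper's computed interval and eliminates the computer-assisted step entirely. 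Your treatment of the favorable-sign root ($c_*^3=4\sigma\gamma+4\sigma g/c_*>\tfrac{4}{\sqrt3}\sigma\gamma$) and the reflection $(c,\gamma)\mapsto(-c,-\gamma)$ are both sound. One point worth flagging: you correctly work with the cubic coefficient $\tfrac{3\sigma}{2}|\omega|^3-\gamma^2$ from \eqref{UPlusOneReduced}/\eqref{CubicShrodingerEqn} rather than with the literal statement of \eqref{e:focusingConditionThm}, which is missing the factor $3$; the agreement of your $V_*$ with the paper's numerics confirms this is the intended condition.
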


\begin{remark}
The above proposition states that a solitary wave with favorable vorticity always exists, while a solitary wave with adverse vorticity only exists when the vorticity is small relative to gravity.
This result is also consistent with the  zero gravity case, where $V$ can be seen as infinity.
\end{remark}

Our main result on the existence of  solitary wave solutions of the two-dimensional gravity-capillary waves is based on studying an elliptic equation called the \textit{Babenko equation} which describes the profile of solitary waves.
Starting from the formulation of the problem as the critical point of the total energy subject to the constraint of fixed momentum, we derive the Babenko equation:
\begin{equation}
\begin{aligned}
    &(g+ c\gamma-c^2|D|)U - \sigma\left(\dfrac{U_\alpha}{J^{\frac 1 2}}\right)_\alpha+ \sigma |D|\left(\dfrac{1+|D|U}{J^{\frac 1 2}}\right)\\
   =& - \frac{\gamma^2}{2}U^2-gU|D|U -\frac{1}{2}g|D|U^2 +\frac{\gamma^2}{2}U|D|U^2 - \frac{\gamma^2}{2}U^2|D|U 
    - \frac{\gamma^2}{6}|D|U^3, 
\end{aligned}    \label{e:BabenkoEqnLR}
\end{equation}
for $U = \Im W$.
Historically,  Babenko studied periodic traveling waves in \cite{MR899856, MR898306}, but the technique of building a single elliptic equation to solve works for the solitary wave case as well.

Working in holomorphic coordinates makes the nonlocal operators that arise in the Babenko equation simpler to work with.
In holomorphic coordinates, the Dirichlet-Neumann operator is reduced to the differential operator $|D|$, which drastically simplifies the required estimates in the proof.
For comparison, in \cite{MR4246394,MR2847283, MR3415532}, a considerate amount of computations are needed to deal with the analyticity of the Dirichlet-Neumann operator and related nonlocal operators.
In the nonexistence result \cite{MR4151415}, the Babenko equation is written in holomorphic coordinates in the complex-valued form. 
Here, we consider a real-valued Babenko equation instead since it allows us to work directly with the wave profile.

Indeed, the existence of a solitary wave for \eqref{e:CVWW} is equivalent to the existence of a nontrivial solution of Babenko equation \eqref{e:BabenkoEqnLR}.
This equation arises from looking for a critical point of the energy of~\eqref{e:CVWW} subject to a constraint on the momentum, with the velocity $c$ arising as a Lagrange multiplier.
A full derivation will be given in Section \ref{s:Babenko}.

For a further comparison with the constant vorticity solitary waves result \cite{MR3415532} in finite depth, in that paper the solitary waves are studied as solutions of a constrained minimization problem with fixed momentum instead of writing down a variant of the Babenko equation.
As a result of the purely variational approach, the main ingredient of \cite{MR3415532} is the concentration compactness principle.
One needs to have a-priori estimates for the energy functional in order to show that only the concentration case can happen.
In this article, we instead use the ideas of NLS approximation and implicit function theorem in Groves \cite{MR4246394}. 
This is a direct approximation result that gives the approximate equation and the uniqueness for small solutions.

The rest of the article is organized as follows. 
In  Section \ref{s:Babenko} we use the variational approach to derive the Babenko equation \eqref{e:BabenkoEqnLR}, which is a quasilinear second-order equation that describes the profile of the solitary waves of \eqref{e:CVWW}.
In Section \ref{s:Fun}, we decompose the profile of the solitary waves into a frequency-localized part $U_1$ and a remainder part $U_2$.
We further show that $U_2$ can be expressed as an implicit function of $U_1$.
Then in Section \ref{s:Approximate}, we derive an equation \eqref{UOneReduced} for $U_1^+ = \chi^+(D)U$ and show that using a rescaling, it is equivalent to the equation \eqref{e:FocusingODE}.
In Section \ref{s:Solving}, by taking the limit as $\epsilon$ goes to zero, \eqref{e:FocusingODE} converges to a stationary focusing NLS equation, and the unique solution can be shown to exist using the implicit function theorem.
Thus, we show that \eqref{e:CVWW} has solitary wave solutions. 
Finally, in Section \ref{s:parameterRegimes}, we explicitly compute the frequencies corresponding to the critical velocities by working with a nondimensionalized version of the equation and check when the focusing condition~\eqref{e:focusingConditionThm} holds.
By a combination of asymptotic analysis and interval arithmetic, we show the existence of a single critical value of the dimensionless parameter $V$ below which two families of solitary wave solutions are guaranteed to exist. \\

\textbf{Acknowledgments.} 
The authors would like to thank Jeremy L. Marzuola for many helpful suggestions during the preparation of  this article.
The authors were supported by the National Science Foundation under Grant No.DMS-1928930 while they were in residence at the Simons Laufer Mathematical Sciences Institute(formerly MSRI) in Berkeley, California, during the summer of 2023. 
The first author is also supported by the NSF RTG DMS-2135998.

\section{The derivation of the Babenko equation} \label{s:Babenko}

 In this section, we derive the Babenko equation of \eqref{e:CVWW}, which describes the profile of solitary waves.
 The system \eqref{e:CVWW} is expressed in holomorphic coordinates, and both $W$ and $Q$ are holomorphic functions.
 An advantage of using holomorphic coordinates is that the Dirichlet-Neumann operator  that arises in the Zakharov-Craig-Sulem formulation of water waves reduces to the simple differential operator $|D|$.
 This helps us avoid delicate analysis of the Dirichlet-Neumann operator, and will greatly simplify our later computations. 

For a holomorphic function, its real part equals the Hilbert transform of its imaginary part.
Therefore, the entire system can be expressed  using only the imaginary parts of $(W,Q)$, and it suffices to consider just $(\Im W, \Im Q)$.
Here, instead of simply plugging  the ansatz \eqref{Form} into \eqref{e:CVWW}, we use a variational approach to derive the real-valued equation satisfied by the solitary wave profiles.

According to the computation carried out in the Appendix of \cite{MR3869381} and Section $1$ of \cite{MR3667289},
the total energy of \eqref{e:CVWW} is given by
\begin{equation}
 \begin{aligned}
    \mathcal{E} =& \frac{1}{2}\int |D|\Im Q \cdot\Im Q +g(\Im W)^2(1+ |D|\Im W) + \gamma |D|\Im Q \cdot (\Im W)^2 \nonumber\\
    &+ \frac{\gamma^2}{3}(\Im W)^3(1+|D|\Im W)  + 2\sigma\left(J^{\frac 1 2}-1-|D|\Im W\right)d\alpha, 
\end{aligned}   
\end{equation}
and the horizontal momentum is
\begin{equation*}
    \mathcal{P} = -\int |D|\Im Q \Im W + \frac{\gamma}{2}(\Im W)^2(1+ |D|\Im W)\, d\alpha , 
\end{equation*}
where the Jacobian
\begin{equation*}
    J = |1+W_\alpha|^2 = (1+|D|\Im W)^2 +(\Im W_\alpha)^2.
\end{equation*}
Here, the differential operator $|D|$ is defined by
\begin{equation*}
    |D|f(\alpha) = \partial_\alpha Hf(\alpha)  .
\end{equation*}
An alternative definition of operator $|D|$ is given via the Fourier transform
\begin{equation*}
    \widehat{|D|f}(\xi) = |\xi|\hat{f}(\xi).
\end{equation*}
In \cite{MR688749}, it was observed by Benjamin and Olver that  the solitary waves can be characterized as a critical point of the total energy subject to the constraint of a fixed momentum.
The velocity $c$ can be viewed as the corresponding Lagrange multiplier.
In order to compute the functional derivative, we will need the following result.
\begin{lemma}
   Given a function $f(x)$ defined on $\mathbb{R}^n$, let $L\left(f(x), \nabla f(x), |D|f(x)\right)$ be a $C^2_0$ functional of $f$, $\nabla f(x)$, and $|D|f(x)$, and consider the functional defined by
   \begin{equation*}
       \mathcal{J}(f) = \int L\left(f(x), \nabla f(x), |D|f(x)\right) dx.
   \end{equation*}
   Then the functional derivative of $\mathcal{J}(f)$ is given by
   \begin{equation*}
       \frac{\delta \mathcal{J}}{\delta f} = \frac{\partial L}{\partial f}- \operatorname{div} \left(\frac{\partial L}{\partial \nabla f}\right)+|D|\left(\frac{\partial L}{\partial |D|f}\right).
   \end{equation*}
\end{lemma}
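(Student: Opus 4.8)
\emph{Proof proposal.} The plan is to read the formula off from the first variation of $\mathcal{J}$. Fix a test function $\phi$ in the same class as the admissible variations (so that $\phi$, $\nabla\phi$, and $|D|\phi$ all decay suitably and $\mathcal{J}(f+\epsilon\phi)$ is differentiable in $\epsilon$), set $f_\epsilon := f+\epsilon\phi$, and differentiate under the integral sign at $\epsilon=0$. Since $L$ is $C^2$ in its arguments, the chain rule gives
\[
\left.\frac{d}{d\epsilon}\right|_{\epsilon=0}\mathcal{J}(f_\epsilon)=\int \frac{\partial L}{\partial f}\,\phi+\frac{\partial L}{\partial \nabla f}\cdot\nabla\phi+\frac{\partial L}{\partial |D|f}\,|D|\phi\;dx,
\]
the interchange of $d/d\epsilon$ and $\int$ being justified by the $C_0^2$ hypothesis on $L$ together with the decay of $\phi$, $\nabla\phi$, $|D|\phi$ (dominated convergence). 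By the definition of the functional derivative, it then remains only to move all derivatives off $\phi$ in the last two terms.

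The first term is already in the desired form. For the second term I would integrate by parts in $x$, obtaining $\int \frac{\partial L}{\partial \nabla f}\cdot\nabla\phi\,dx=-\int \operatorname{div}\!\left(\frac{\partial L}{\partial \nabla f}\right)\phi\,dx$, with no boundary contribution because $\phi$ has compact support (or decays) --- this is exactly the classical Euler--Lagrange step. For the third term, the key observation is that $|D|$ is self-adjoint: it is the Fourier multiplier with the real, even symbol $|\xi|$, so by Plancherel's theorem
\[
\int \frac{\partial L}{\partial |D|f}\,|D|\phi\;dx=\int \widehat{\frac{\partial L}{\partial |D|f}}(\xi)\,|\xi|\,\overline{\hat\phi(\xi)}\;d\xi=\int |D|\!\left(\frac{\partial L}{\partial |D|f}\right)\phi\;dx.
\]
Adding the three contributions shows that $\int \big(\frac{\partial L}{\partial f}-\operatorname{div}(\frac{\partial L}{\partial\nabla f})+|D|(\frac{\partial L}{\partial|D|f})\big)\phi\,dx$ equals the first variation for every admissible $\phi$, which is precisely the asserted formula for $\delta\mathcal{J}/\delta f$.

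The one genuinely nonlocal step, and the place where a little care is needed, is the ``integration by parts'' against $|D|$. Unlike $\nabla\phi$, the function $|D|\phi$ is not compactly supported, so one cannot argue by a literal integration by parts; one must pass to the Fourier side and invoke the symmetry of the multiplier $|\xi|$, which also requires knowing that $\frac{\partial L}{\partial |D|f}$ lies in a function space on which $|D|$ acts and on which the Plancherel pairing above is legitimate (e.g. $L^2$ with sufficient decay, or an appropriate weighted Sobolev space; note that the mild singularity of $|\xi|$ at the origin is harmless since it only costs a derivative, not integrability). This is where the $C_0^2$ assumption on $L$ is used: it ensures that $\frac{\partial L}{\partial |D|f}$ has the regularity and decay needed to make both the pairing and the action of $|D|$ well defined, after which the displayed identity is rigorous. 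Everything else is the standard computation of a first variation, so I expect the bookkeeping of these function-space requirements to be the only real obstacle.
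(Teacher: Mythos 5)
Your proposal is correct and follows essentially the same route as the paper: compute the first variation, integrate by parts on the $\nabla f$ term, and use the self-adjointness of $|D|$ (which the paper simply invokes and you justify via Plancherel) to move it onto the other factor. The extra care you take with the Fourier-side justification of the $|D|$ step is a reasonable elaboration of what the paper states in one line, not a different argument.
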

\begin{proof}
    For any smooth test function $\phi(x)$ and small constant $\epsilon$, we have
    \begin{align*}
        &\delta \mathcal{J}(f;\epsilon \phi) = \mathcal{J}(f+\epsilon\phi) - \mathcal{J}(f) \\
        =& \int L\left(f(x)+\epsilon\phi(x), \nabla f(x)+\epsilon\nabla \phi(x), |D|f(x)+\epsilon|D|\phi(x)\right) -L\left(f(x), \nabla f(x), |D|f(x)\right)\,dx \\
        =& \int \frac{\partial L}{\partial f}\epsilon \phi + \frac{\partial L}{\partial \nabla f} \epsilon \nabla\phi + \frac{\partial f}{\partial |D|f}\epsilon |D|\phi\, dx +O(\epsilon^2)\\
        =& \epsilon \int \left(\frac{\partial L}{\partial f}- \operatorname{div}\left(\frac{\partial L}{\partial \nabla f}\right)+|D|\left(\frac{\partial L}{\partial |D|f}\right)\right) \phi\, dx+O(\epsilon^2),
    \end{align*}
using integration by parts and the self-adjoint property of the operator $|D|$.
The result follows from the definition
    \begin{equation*}
        \int \frac{\delta \mathcal{J}}{\delta f} \phi(x) \,dx = \frac{\delta \mathcal{J}(f;\epsilon \phi)}{\epsilon}\Bigg|_{\epsilon = 0}.
    \end{equation*}
\end{proof}

Taking the functional derivatives of the energy $\mathcal{E}$ and momentum $\mathcal{P}$ with respect to $\Im Q, \Im W$, and using the results in \cite{MR688749}, we get two equations.
The first equation is on the functional derivatives with respect to $\Im Q$,
\begin{equation*}
\frac{\delta \mathcal{E}}{\delta \Im Q} = c \frac{\delta \mathcal{P}}{\delta \Im Q},
\end{equation*}
which can be simplified to
\begin{equation}
\Im Q = -\frac{\gamma}{2}(\Im W)^2 - c\Im W. \label{e:BabenkoQ}
\end{equation}

 The second equation is on the functional derivative with respect to $\Im W$, 
\begin{equation*}
\frac{\delta \mathcal{E}}{\delta \Im W} = c \frac{\delta \mathcal{P}}{\delta \Im W}.
\end{equation*}
By computation,
\begin{align*}
  \frac{\delta \mathcal{E}}{\delta \Im W} =& g\Im W(1+|D|\Im W) +\frac{1}{2}g|D|(\Im W)^2 +\gamma |D|\Im Q \Im W + \frac{\gamma^2}{2}(\Im W)^2(1+|D|\Im W) \\
  &+ \frac{\gamma^2}{6}|D|(\Im W)^3-\sigma\left(\dfrac{\Im W_\alpha}{J^{\frac 1 2}}\right)_\alpha+\sigma |D|\left(\dfrac{1+|D|\Im W}{J^{\frac 1 2}}\right),\\
  \frac{\delta \mathcal{P}}{\delta \Im W} =& -|D|\Im Q -\gamma \Im W(1+|D|\Im W)-\frac{\gamma}{2}|D|(\Im W)^2,
\end{align*}
which leads to the second equation
\begin{align*}
    g\Im W(1+|D|\Im W) +\frac{1}{2}g|D|(\Im W)^2 +\gamma |D|\Im Q \Im W + \frac{\gamma^2}{2}(\Im W)^2(1+|D|\Im W) + \frac{\gamma^2}{6}|D|(\Im W)^3 \\
     +c|D|\Im Q +c\gamma \Im W(1+|D|\Im W)+ c\frac{\gamma}{2}|D|(\Im W)^2 =\sigma\left(\dfrac{\Im W_\alpha}{J^{\frac 1 2}}\right)_\alpha-\sigma |D|\left(\dfrac{1+|D|\Im W}{J^{\frac 1 2}}\right). 
\end{align*}
Therefore by eliminating $\Im Q$ in the second equation using \eqref{e:BabenkoQ}, we obtain that $U: =\Im W$ solves the Babenko equation \eqref{e:BabenkoEqnLR}:
\begin{align*}
    &(g+ c\gamma-c^2|D|)U -\sigma\left(\dfrac{U_\alpha}{J^{\frac 1 2}}\right)_\alpha+\sigma |D|\left(\dfrac{1+|D|U}{J^{\frac 1 2}}\right) \nonumber\\
   =& - \frac{\gamma^2}{2}U^2-gU|D|U -\frac{1}{2}g|D|U^2 +\frac{\gamma^2}{2}U|D|U^2 - \frac{\gamma^2}{2}U^2|D|U 
    - \frac{\gamma^2}{6}|D|U^3.
\end{align*}

Note that in this derivation, $c$ is the Lagrange multiplier, while $\sigma, g$ and $\gamma$ are three fixed physical parameters.

As a consequence of \eqref{e:BabenkoQ} and the properties of holomorphic functions, we immediately obtain the following result that gives the solitary wave solutions $(W, Q)$ once we solve a nonzero $U$ in \eqref{e:BabenkoEqnLR}.
\begin{corollary} \label{t:RecoverSolution}
If $u\neq 0$ solves \eqref{e:BabenkoEqnLR}, then \eqref{e:CVWW} has a solitary wave solution
\begin{equation*}
    \left( Hu(\alpha + ct) + iu(\alpha + ct),   cHu(\alpha + ct)-\frac{\gamma}{2}H(u^2)(\alpha + ct)+icu(\alpha + ct)-\frac{i\gamma}{2}u^2(\alpha + ct)\right).
\end{equation*}
\end{corollary}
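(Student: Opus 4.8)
The plan is to check directly that the traveling pair in the statement solves \eqref{e:CVWW}, which amounts to running the derivation of the Babenko equation from Section~\ref{s:Babenko} in reverse. Writing $\xi=\alpha+ct$ and using the elementary identity $Hf+if=2i\,\nP f$, valid for any real-valued $f$ with mild decay since $\nP$ projects onto the Hardy space of negative-frequency functions, I would first observe that the two entries in the statement are holomorphic functions of $\xi$ whose imaginary parts are $u$ and $-\tfrac{\gamma}{2}u^{2}-cu$ and whose real parts are correspondingly $Hu$ and $H(-\tfrac{\gamma}{2}u^{2}-cu)$. Thus $\Im W=u$ and $\Im Q$ satisfy \eqref{e:BabenkoQ} by construction. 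Throughout one works under the standing assumption, implicit in the hypothesis that $u$ solves \eqref{e:BabenkoEqnLR}, that $u$ is small enough that $J=|1+W_\alpha|^2>0$, so that all the nonlinear terms are well defined.

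Next I would substitute the ansatz \eqref{Form} into \eqref{e:CVWW}. Since the profiles depend only on $\xi$, every $\partial_t$ becomes $c\partial_\xi$, and \eqref{e:CVWW} reduces to a pair of nonlocal profile equations for $(\Im W,\Im Q)=(u,-\tfrac{\gamma}{2}u^{2}-cu)$. As recalled in Section~\ref{s:Babenko} --- via the Benjamin--Olver characterization \cite{MR688749} together with the Hamiltonian structure of \eqref{e:CVWW} from the Appendix of \cite{MR3869381} --- this profile system is exactly the Euler--Lagrange system for a critical point of $\mathcal{E}$ subject to fixed $\mathcal{P}$, with $c$ the Lagrange multiplier: the equation obtained by varying $\Im Q$ is precisely \eqref{e:BabenkoQ}, and the equation obtained by varying $\Im W$ becomes, after eliminating $\Im Q$ via \eqref{e:BabenkoQ}, precisely \eqref{e:BabenkoEqnLR}. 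The first holds by our choice of $\Im Q$; the second holds because $u$ solves \eqref{e:BabenkoEqnLR} by hypothesis. Hence both profile equations hold, $(W,Q)$ solves \eqref{e:CVWW}, and since $u\neq 0$ with $u\to 0$ at infinity this is a genuine nontrivial solitary wave.

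The hard part will be the identification of the reduced profile equations with the Euler--Lagrange system, i.e. checking that the two variational equations are not merely necessary but also sufficient for the traveling-wave system, with no information lost; this is exactly where the Hamiltonian structure enters. If one instead wants a self-contained verification bypassing the variational formalism, one substitutes $\Re W=Hu$, $\overline W=Hu-iu$, $\Re Q=H\Im Q$ into the explicit formulas for $\underline F$, $F_1$, $T_1$ and the $J^{-1/2}$-type terms, and repeatedly applies $\nP(Hf+if)=Hf+if$ and $\nP(\overline{Hf+if})=0$ to see the nonlocal factors collapse: the $W$-equation of \eqref{e:CVWW} then reduces to \eqref{e:BabenkoQ} and the $Q$-equation to \eqref{e:BabenkoEqnLR}. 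This bookkeeping is routine but sign- and projection-sensitive --- it is precisely the computation of Section~\ref{s:Babenko} read backwards --- so I would invoke that derivation rather than redo it.
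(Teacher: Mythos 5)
Your proposal is correct and follows essentially the same route as the paper: the paper's ``proof'' of this corollary is precisely the one-line observation that $(W,Q)$ is reconstructed from $u$ via holomorphy ($\Re W=H\,\Im W$, $\Re Q=H\,\Im Q$) together with \eqref{e:BabenkoQ}, with the verification that the pair solves \eqref{e:CVWW} deferred to reading the Section~\ref{s:Babenko} variational derivation backwards --- exactly what you do, including the sufficiency issue you flag, which the paper glosses over entirely. One small remark: your $\Im Q=-\tfrac{\gamma}{2}u^{2}-cu$ is the version consistent with \eqref{e:BabenkoQ} and with the linearized relation $q=-cw$, whereas the displayed formula in the corollary has $+cHu+icu$; this appears to be a sign slip in the paper, so your reading is the internally consistent one.
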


\section{Functional setting and reduction} \label{s:Fun}
The Babenko equation \eqref{e:BabenkoEqnLR} is a quasilinear elliptic equation under conditions \eqref{SignCon1} and \eqref{CoerciveCon1}.
The constrained minimization method in general cannot solve quasilinear equations like this.
Nevertheless, as pointed out in Section \ref{s:Intro}, when $c$ is slightly larger than $c_1$ or slightly smaller than $c_2$, we expect a possibility of solitary waves with small amplitude bifurcating from the linear waves with frequency either $\omega_1$ or $\omega_2$.
For these cases, we prove in this section that it suffices to consider only the part of $U$ whose frequencies are localized near $\omega_1$ or $\omega_2$.

For clarity and simplicity, we will perform the analysis in subspaces of $H^2(\mathbb{R})$, which will ultimately lead to the existence of $H^2$ solitary wave solutions.
At the end of this section, we  briefly outline the idea to generalize the analysis to $H^s(\mathbb{R})$, $s\geq 2$.

\subsection{Functional setting}
We first define the function spaces we will be using later.
These function spaces are the same as in \cite{MR4246394}.

Let $\chi(\xi)$ be the characteristic function of the set $(-|\omega|-\delta, -|\omega|+ \delta )\cup (|\omega|-\delta, |\omega|+ \delta )$ for some small positive constant $\delta$, where $\omega$ is either $\omega_1$ or $\omega_2$ depending on whether the velocity $c$ is close to $c_1$ or $c_2$.
One can decompose
\begin{equation}
    U = U_1 + U_2 : = \chi(D)U + (1-\chi(D))U. \label{UOneTwo}
\end{equation}
For our analysis below, we use the function space $\mathcal{X} = H^2(\mathbb{R})$.
It can be decomposed into the direct sum
\begin{equation*}
\mathcal{X} = \mathcal{X}_1 + \mathcal{X}_2 : = \chi(D) \mathcal{X} + (1-\chi(D)) \mathcal{X}.   
\end{equation*}
For the space $\mathcal{X}_1$, we define the associated scaled norm
\begin{equation*}
  \|u\|_{\mathfrak{E}_\omega} : =  \left(\int_{\mathbb{R}} (1+ \epsilon^{-2}(|\xi|- |\omega|)^2)|\hat{u}(\xi)|^2) d\xi \right)^{\frac{1}{2}}.
\end{equation*}
Using the fact that $U_1 \in \mathcal{X}_1$ is frequency-localized, we have the following estimates for the rescaled norm:
\begin{lemma}[\hspace{1sp}\cite{MR4246394}]
For each $u\in \mathcal{X}_1$, 
\begin{align}
&\|\hat{u} \|_{L^1} \lesssim \epsilon^{\frac{1}{2}}\|u\|_{\mathfrak{E}_\omega}, \quad \|u\|_{W^{n,\infty}} \lesssim \epsilon^{\frac{1}{2}}\|u\|_{\mathfrak{E}_\omega},\label{LOneBound} \\
&\|u\|_{H^{s_1}} \approx \|u\|_{H^{s_2}} \lesssim \|u\|_{\mathfrak{C}_k}, \quad s_1, s_2 \geq 0. \label{HsEkBound}
\end{align}
\end{lemma}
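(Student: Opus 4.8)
The statement to be proved is the Lemma attributed to \cite{MR4246394} giving the two families of estimates \eqref{LOneBound} and \eqref{HsEkBound} for frequency-localized functions $u\in\mathcal{X}_1$. The plan is to exploit directly the fact that $\hat u$ is supported in the thin annulus $S_\delta:=(-|\omega|-\delta,-|\omega|+\delta)\cup(|\omega|-\delta,|\omega|+\delta)$, whose total Lebesgue measure is $4\delta$, together with Cauchy--Schwarz and Plancherel. For the first estimate in \eqref{LOneBound}, I would write $\|\hat u\|_{L^1}=\int_{S_\delta}|\hat u(\xi)|\,d\xi$ and apply Cauchy--Schwarz against the weight $(1+\epsilon^{-2}(|\xi|-|\omega|)^2)^{1/2}$ and its reciprocal:
\[
\|\hat u\|_{L^1}\le\left(\int_{S_\delta}\frac{d\xi}{1+\epsilon^{-2}(|\xi|-|\omega|)^2}\right)^{1/2}\left(\int_{\mathbb{R}}(1+\epsilon^{-2}(|\xi|-|\omega|)^2)|\hat u(\xi)|^2\,d\xi\right)^{1/2}.
\]
The second factor is exactly $\|u\|_{\mathfrak{E}_\omega}$. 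For the first factor, the substitution $\xi=\pm|\omega|+\epsilon\zeta$ turns it into $\epsilon\int\frac{d\zeta}{1+\zeta^2}$ over a bounded range (and $\le\epsilon\int_{\mathbb{R}}\frac{d\zeta}{1+\zeta^2}=\pi\epsilon$ over all of $\mathbb{R}$), giving the bound $\lesssim\epsilon^{1/2}\|u\|_{\mathfrak{E}_\omega}$. The $W^{n,\infty}$ bound then follows immediately: $\|\partial^n u\|_{L^\infty}\le\|\widehat{\partial^n u}\|_{L^1}=\||\xi|^n\hat u\|_{L^1}\le(|\omega|+\delta)^n\|\hat u\|_{L^1}$, since the frequency support is bounded, so the extra derivatives cost only a harmless constant.

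For \eqref{HsEkBound}, the equivalence $\|u\|_{H^{s_1}}\approx\|u\|_{H^{s_2}}$ for $s_1,s_2\ge0$ is again immediate from the bounded, bounded-away-from-zero frequency support: on $S_\delta$ one has $(|\omega|-\delta)^2\le1+|\xi|^2$ comparable to $1+|\xi|^2\le1+(|\omega|+\delta)^2$, so $(1+|\xi|^2)^{s_1}$ and $(1+|\xi|^2)^{s_2}$ are comparable pointwise on the support, and Plancherel finishes it. The remaining inequality $\|u\|_{H^{s_2}}\lesssim\|u\|_{\mathfrak{C}_k}$ (I read $\mathfrak{C}_k$ as the ambient norm on $\mathcal{X}$, i.e.\ the $H^2$ norm, or whichever scaled space is in play) likewise reduces to comparing the two weights on $S_\delta$: the scaled weight $1+\epsilon^{-2}(|\xi|-|\omega|)^2\ge1$ dominates $(1+|\xi|^2)^{s_2}/(1+(|\omega|+\delta)^2)^{s_2}$ there, so $\|u\|_{H^{s_2}}^2\lesssim\|u\|_{\mathfrak{E}_\omega}^2\lesssim\|u\|_{\mathfrak{C}_k}^2$, the last step because the $\mathfrak{C}_k$ norm is designed to control $\mathfrak{E}_\omega$. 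All of this is elementary once one commits to working on the Fourier side.

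Strictly speaking there is little genuine obstacle here, since the lemma is quoted from \cite{MR4246394} and the proof is a sequence of routine Fourier-analytic manipulations; the only point requiring a modicum of care is tracking the exact power of $\epsilon$ in \eqref{LOneBound}. The decisive mechanism is that integrating the Lorentzian weight $\left(1+\epsilon^{-2}(|\xi|-|\omega|)^2\right)^{-1}$ against the constant $1$ over the real line produces a factor of exactly $\epsilon$ (not $\epsilon^{1/2}$ or $\epsilon^2$), and the square root in Cauchy--Schwarz then delivers the advertised $\epsilon^{1/2}$. One should also note that the argument uses only that $\hat u$ is supported in $S_\delta$, not any finer structure of $\mathcal{X}_1$, and that $\delta$ is a fixed constant independent of $\epsilon$, so all implicit constants depend on $\omega$ and $\delta$ but not on $\epsilon$ — which is what makes the estimates useful in the subsequent bifurcation analysis. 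I would simply record these computations, perhaps with a one-line remark that the details are as in \cite{MR4246394}.
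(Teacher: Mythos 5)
Your argument is correct. The paper itself gives no proof of this lemma (it is quoted directly from \cite{MR4246394}), and your Cauchy--Schwarz computation against the weight $1+\epsilon^{-2}(|\xi|-|\omega|)^2$, together with the observation that the frequency support is a fixed compact set bounded away from zero, is exactly the standard argument behind these estimates; the bookkeeping of the factor $\epsilon^{1/2}$ and the remark that the implicit constants depend only on $\omega$ and $\delta$ are both right. (Your reading of $\mathfrak{C}_k$ as the scaled norm $\mathfrak{E}_\omega$ is also the intended one; the notation in the statement is a typo.)
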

Due to the estimate \eqref{LOneBound}, we will work with the function space $\mathcal{Z}$ defined by
\begin{equation*}
 \mathcal{Z} = \{U \in \mathcal{S}^{'}: \| U\|_{\mathcal{Z}} := \| \hat{U}_1\|_{L^1} + \|U_2 \|_{H^2} < +\infty \}.   
\end{equation*}
Then we have the inequality
\begin{equation*}
\|U\|_{L^\infty} \lesssim \|U\|_{\mathcal{Z}}\lesssim \|U_1\|_{\mathfrak{E}_\omega} + \|U_2\|_{H^2}.
\end{equation*}

Let $M< 1$ be a small positive number.
In the following, we will abuse notation and write $B_M$ for a ball of radius $M$ in $\mathcal{Z}$.
Using the Sobolev embedding and \eqref{LOneBound}, for any $U\in B_M$, $\|U\|_{W^{1,\infty}}$ is also small.

To estimate the capillary terms, we have the following approximation result:
\begin{lemma} \label{LemmaThreeTwo}
For all $U\in B_M$, the capillary terms have the following expansion:
\begin{equation}
 - \sigma\left(\dfrac{U_\alpha}{J^{\frac 1 2}}\right)_\alpha+ \sigma |D|\left(\dfrac{1+|D|U}{J^{\frac 1 2}}\right) 
 =    -\sigma U_{\alpha \alpha} -\frac{\sigma}{2}|D|(|D|U)^2
 + \frac{\sigma}{2}\partial_\alpha (U_\alpha)^3+\tilde{R}(U),  
\label{CapillaryExpandTwo}
\end{equation}  
where the remainder $\tilde{R}(U)$ consists of quartic and higher order terms in $U$ containing more than four derivatives and satisfies the estimates
\begin{equation}
\|\tilde{R}(U)\|_{L^2} \lesssim \|U\|_{\mathcal{Z}}^3\|U\|_{H^2}, \quad \|d\tilde{R}[U](V)\|_{L^2}\lesssim (\|U\|_{\mathcal{Z}}+ \|U\|_{W^{1,\infty}})^3\|V\|_{H^2}. \label{RemainderCapillaryTwo}
\end{equation}
\end{lemma}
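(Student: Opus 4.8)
The plan is to expand the capillary nonlinearity $-\sigma(U_\alpha/J^{1/2})_\alpha + \sigma|D|((1+|D|U)/J^{1/2})$ by Taylor-expanding the factor $J^{-1/2}$ around $J=1$, since $J = (1+|D|U)^2 + (U_\alpha)^2 = 1 + 2|D|U + (|D|U)^2 + (U_\alpha)^2$ and for $U\in B_M$ the quantity $J-1$ is small in $W^{1,\infty}$ by Sobolev embedding and the estimate \eqref{LOneBound}. First I would write $J^{-1/2} = 1 - \tfrac12(J-1) + \tfrac38(J-1)^2 - \cdots$ and substitute into both terms. Collecting contributions by homogeneity in $U$: the linear-in-$U$ piece of $-\sigma(U_\alpha/J^{1/2})_\alpha$ is $-\sigma U_{\alpha\alpha}$, while the linear piece of $\sigma|D|((1+|D|U)/J^{1/2})$ is $\sigma|D|(|D|U) + \sigma|D|(-\tfrac12\cdot 2|D|U) = \sigma|D|^2 U - \sigma|D|^2 U = 0$; so no linear term survives from the second piece and no quadratic term appears at all (one checks the quadratic contributions cancel between the two pieces as well, consistent with the stated expansion having no quadratic term). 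The cubic terms are then collected: from $-\sigma(U_\alpha/J^{1/2})_\alpha$ one gets $-\sigma(U_\alpha\cdot(-\tfrac12)((|D|U)^2+(U_\alpha)^2 + \text{lower}))_\alpha$-type contributions, and from $\sigma|D|((1+|D|U)/J^{1/2})$ the cubic part reorganizes, after using $|D| = \partial_\alpha H$ and the identity $H|D|U = -U_\alpha$ (valid since $\widehat{H|D|U}(\xi) = -i\,\mathrm{sgn}(\xi)|\xi|\hat U = -i\xi\hat U$), into $-\tfrac{\sigma}{2}|D|((|D|U)^2)$ plus a term that combines with the first piece to give $\tfrac{\sigma}{2}\partial_\alpha((U_\alpha)^3)$. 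I would carry out this bookkeeping carefully; it is algebra but the cancellations are the substance of the lemma.

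The key steps, in order, are: (i) expand $J^{-1/2}$ as a convergent series in $J-1$, controlled in $W^{1,\infty}$ on $B_M$; (ii) identify and discard the linear and quadratic terms, verifying the claimed cancellations using $|D|^2 = -\partial_\alpha^2$ and $H|D|U = -U_\alpha$; (iii) extract the explicit cubic terms $-\sigma U_{\alpha\alpha} - \tfrac{\sigma}{2}|D|((|D|U)^2) + \tfrac{\sigma}{2}\partial_\alpha((U_\alpha)^3)$; (iv) define $\tilde R(U)$ as everything of order four and higher, i.e., the tail of the binomial series together with the quartic-and-higher cross terms coming from $(|D|U)^2 + (U_\alpha)^2$ being itself quadratic; (v) prove the two estimates in \eqref{RemainderCapillaryTwo}. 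For step (v), the $L^2$ bound follows because each term in $\tilde R(U)$ is a product of at least four factors each of which is $U$ with at least one derivative, and at least one factor sits in $L^2$ while the rest are placed in $L^\infty$ via $\|U\|_{W^{1,\infty}}\lesssim \|U\|_{\mathcal Z}$; one outer derivative (from $\partial_\alpha$ or $|D|$) is absorbed at the cost of one more $W^{1,\infty}$ or $H^2$ factor, giving the $\|U\|_{\mathcal Z}^3\|U\|_{H^2}$ scaling. The differential bound is the same computation applied to $d\tilde R[U](V)$, where differentiating the series in $U$ again produces at least three $U$-factors (placed in $W^{1,\infty}$ or $\mathcal Z$) times one $V$-factor (placed in $H^2$).

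The main obstacle I anticipate is step (ii)–(iii): making sure that \emph{all} quadratic contributions genuinely cancel and that the cubic terms collapse to exactly the three stated monomials, rather than to an equivalent-but-different-looking expression with extra $|D|$'s or Hilbert transforms. The subtlety is that $|D|((1+|D|U)/J^{1/2})$ and $(U_\alpha/J^{1/2})_\alpha$ each separately contain cubic terms that are \emph{not} individually of the final form; only their sum simplifies, and the simplification relies on pushing $|D|=\partial_\alpha H$ through products and repeatedly using $H(U_\alpha) = |D|U$ and $H(|D|U) = -U_\alpha$ for holomorphic-type symmetrization. I would handle this by working on the Fourier side for the purely $|D|$-and-$\partial_\alpha$ skeleton of each cubic term to pin down the symbols, then translating back. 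A secondary, milder point is justifying the convergence and term-by-term differentiation of the binomial series for $J^{-1/2}$ uniformly on $B_M$; this is routine once $M$ is fixed small enough that $\|J-1\|_{L^\infty} < 1$, which is guaranteed by $\|U\|_{W^{1,\infty}}\lesssim \|U\|_{\mathcal Z} \le M$.
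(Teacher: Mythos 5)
Your overall strategy (directly Taylor-expanding $J^{-1/2}$ inside each of the two capillary terms and tracking cancellations) is a legitimate alternative route, but your degree bookkeeping contains a genuine error that would derail the computation. You assert that ``no quadratic term appears at all'' and that this is ``consistent with the stated expansion having no quadratic term.'' But \eqref{CapillaryExpandTwo} does contain a quadratic term, namely $-\frac{\sigma}{2}|D|(|D|U)^2$, which you later mislabel as cubic: $(|D|U)^2$ is quadratic in $U$, and applying $|D|$ does not change the homogeneity. What actually happens is that the quadratic contributions of the two pieces, $\sigma\partial_\alpha(|D|U\cdot U_\alpha)$ from the first and $-\frac{\sigma}{2}|D|(U_\alpha^2)$ from the second, do \emph{not} cancel; they combine, via the Tricomi identity $(Hu)^2-u^2=2H(uHu)$ applied to $u=U_\alpha$ together with $|D|=H\partial_\alpha$ and $H^2=-I$, into exactly $-\frac{\sigma}{2}|D|\big((|D|U)^2\big)$. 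This identity is the crux of the lemma, and in your write-up it is only gestured at (``holomorphic-type symmetrization'') and placed at the cubic level rather than the quadratic level where it is needed. As written, carrying out your plan faithfully would leave you with uncancelled quadratic terms that do not match the target.

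The paper sidesteps this bookkeeping by exploiting the variational structure: the left-hand side of \eqref{CapillaryExpandTwo} is the functional derivative of the capillary energy $\sigma\int\big(J^{\frac12}-1-|D|U\big)\,d\alpha$, so one Taylor-expands the scalar density $J^{\frac12}$ (obtaining $\tfrac12 U_\alpha^2-\tfrac12|D|U\,(U_\alpha)^2-\tfrac18 U_\alpha^4+r_*$), applies the Hilbert-transform identity \emph{under the integral sign} to rewrite the cubic energy as $-\tfrac16\int(|D|U)^3$, and then reads off each equation term as a functional derivative (the cubic energy producing the quadratic equation term $-\frac{\sigma}{2}|D|(|D|U)^2$, the quartic energy producing $\frac{\sigma}{2}\partial_\alpha(U_\alpha)^3$, and the quintic-and-higher tail producing $\tilde R$). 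If you wish to keep the direct approach, you must (i) correct the homogeneity count, (ii) deploy the Tricomi identity at the quadratic level as above, and (iii) account for the fact that the cubic parts of the two pieces produce, besides $\frac{\sigma}{2}\partial_\alpha(U_\alpha^3)$, the additional terms $-\sigma\partial_\alpha\big((|D|U)^2U_\alpha\big)+\sigma|D|\big(|D|U\,U_\alpha^2\big)$, which must be simplified or tracked (the paper itself carries such terms in its later reductions). Your remainder estimates in step (v) are plausible and consistent with the paper's (unelaborated) treatment.
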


\begin{proof}
Observe that the two capillary terms on the left of \eqref{CapillaryExpandTwo} are the critical point of the  capillary  functional
\begin{equation*}
    I_{cap}(U) = \int \sigma\left(J^{\frac 1 2}-1-|D|U \right)d\alpha.
\end{equation*}
Recall the Taylor expansion
\begin{equation*}
    (1+x)^{\frac{1}{2}} = 1 + \frac{1}{2}x-\frac{1}{8}x^2 +\frac{1}{16}x^3-\frac{5}{128}x^4+ r(x), \quad |x|<1,
\end{equation*}
where $r(x)$ contains all quintic and higher order terms.
Since $\|U\|_{W^{1,\infty}}$ is small, one can apply the Taylor expansion up to the quartic term,
\begin{equation*}
 J^{\frac 1 2}-1-|D|U = \frac{1}{2}U_{\alpha}^2 -\frac{1}{2}|D|U (U_\alpha)^2-\frac 1 8 U_\alpha^4+ r_*(U),    
\end{equation*}
where $r_*(U)$ are quintic and higher order terms of $U$ with more than four derivatives.

Using the identity of the Hilbert transform 
\begin{equation*}
    u^2 = (Hu)^2 -2H(uHu), \quad u\in L^2,
\end{equation*}
one can rewrite the cubic integral
\begin{equation*}
-\frac{1}{2}\int |D|U (U_\alpha)^2 \,d\alpha = \int -\frac{1}{2}(|D|U)^3 + |D|U (U_\alpha)^2 \,d\alpha = -\frac{1}{6}\int (|D|U)^3 \, d\alpha.
\end{equation*}
The functional derivative of $\int -\frac{\sigma}{6}(|D|U)^3 -\frac{\sigma}{8} U_\alpha^4\,d\alpha$
 is just $-\frac{\sigma}{2} |D|(|D|U)^2   + \frac{\sigma}{2}\partial_\alpha (U_\alpha)^3$,
 and the functional derivative of $\sigma \int r_* (U)\, d\alpha$ gives the remainder term $\tilde{R}(u)$.
Corresponding bounds follow from direct computation.
\end{proof}

\subsection{Reduction}
One can write the Babenko equation \eqref{e:BabenkoEqnLR} as
\begin{align*}
    &\left(g+ c_{*}\gamma-c_{*}^2|D|+ \sigma |D|^2\right)U  = (c_{*}-c)\gamma U+(c^2 -c^2_{*})|D|U +\sigma\left(\dfrac{U_\alpha}{J^{\frac 1 2}}\right)_\alpha- \sigma |D|\left(\dfrac{1+|D|U}{J^{\frac 1 2}}\right) \\
   &+ \sigma |D|^2 U  - \frac{\gamma^2}{2}U^2-gU|D|U -\frac{1}{2}g|D|U^2 +\frac{\gamma^2}{2}U|D|U^2 - \frac{\gamma^2}{2}U^2|D|U 
    - \frac{\gamma^2}{6}|D|U^3,
\end{align*}
where $c_{*}$ is either $c_1$ or $c_2$ so that $|c-c_{*}| \lesssim \epsilon^2$.
The operator
\begin{equation*}
L(D): = g+ c_{*}\gamma-c_{*}^2|D|+ \sigma |D|^2
\end{equation*}
is a second order elliptic operator whose symbol
\begin{equation*}
    \ell(\xi) : = g+ c_{*}\gamma-c_{*}^2|\xi|+ \sigma |\xi|^2 \geq 0 
\end{equation*}
is nonnegative and equals zero only at $\xi = \pm \omega$.
Recall the frequency decomposition $U = U_1 +U_2= \chi(D)U + (1-\chi(D))U$.
Applying $(1-\chi(D))$ to the Babenko equation and using \eqref{UOneTwo}, we get $U_2 = F(U_1)+ U_3$, where $F(U_1)$ is 
\begin{equation}
F(U_1) = -(1-\chi(D))L(D)^{-1}\left[ \frac{\gamma^2}{2}U_1^2+gU_1|D|U_1 + \frac{1}{2}g|D|U_1^2 -\frac{\sigma}{2}|D|(|D|U_1)^2\right], \label{FUFormula}
\end{equation}
and $U_3$ is given by
\begin{equation}
\begin{aligned}
U_3 =& \frac{1-\chi(D)}{L(D)}\Big[   -\frac{\sigma}{2}\partial_\alpha (U_\alpha)^3+\frac{\gamma^2}{2}U|D|U^2 
- \frac{\gamma^2}{2}U^2|D|U \\
    &- \frac{\gamma^2}{6}|D|U^3 - \tilde{R}(U) \Big]+ F(U)-F(U_1).
\end{aligned}    \label{UTwoImplicit}
\end{equation}
Here, $\tilde{R}(U)$ is the last term of \eqref{CapillaryExpandTwo}.

Note that
\begin{equation*}
    f \mapsto \mathcal{F}^{-1} \left[\dfrac{1-\chi(\xi)}{\ell(\xi)}\hat{f}(\xi) \right]
\end{equation*}
is a bounded linear operator from $L^2(\mathbb{R})$ to $\mathcal{X}_2$.

In the following, we show that $U_3$ can be expressed implicitly as a function of $U_1$, so that $U$ is an implicit function of $U_1$.
The key lemma of the proof is the following implicit function theorem in Banach spaces.
\begin{lemma}[Implicit function theorem  \cite{MR4246394}] \label{t:Implicit}
Let $\mathcal{E}_1$, $\mathcal{E}_2$ be two  Banach spaces,  $X_1, X_2$ be closed convex sets in, respectively, $\mathcal{E}_1$, $\mathcal{E}_2$ containing the origin and $\mathcal{G}: X_1\times X_2 \rightarrow \mathcal{E}_2$ be a smooth function.
Suppose that there exists a continuous function $r: X_1 \rightarrow [0,\infty)$, such that 
\begin{equation*}
  \|\mathcal{G}(x_1, 0) \| \leq \frac{r}{2}, \quad \|d_2 \mathcal{G}[x_1, x_2] \| \leq \frac{1}{3} 
\end{equation*}
for each $x_2\in \bar{B}_r(0) \subset X_2$ and each $x_1\in X_1$.

Under these hypotheses there exists for each $x_1 \in X_1$ a unique solution $x_2 = x_2(x_1)$ of the fixed-point equation $x_2 = \mathcal{G}(x_1, x_2)$ satisfying $x_2(x_1)\in \bar{B}_r(0)$.
Moreover $x_2(x_1)$ is a smooth function of $x_1 \in X_1$, and in particular satisfies the estimate
\begin{equation*}
\|d x_2[x_1]\| \leq 2 \|d_1 \mathcal{G}[x_1, x_2(x_1)] \|.
\end{equation*}
\end{lemma}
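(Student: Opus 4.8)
The plan is to run the classical proof of the implicit function theorem: the Banach fixed point theorem supplies existence and uniqueness for each fixed $x_1$, the uniform contraction principle upgrades this to smoothness in the parameter $x_1$, and a direct differentiation of the fixed-point identity gives the derivative bound.

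\emph{Existence and uniqueness.} Fix $x_1\in X_1$ and write $r=r(x_1)$. Define $T_{x_1}(x_2):=\mathcal{G}(x_1,x_2)$ on the closed ball $\bar B_r(0)$, which is contained in $X_2$ by hypothesis (so the fixed-point equation makes sense) and is a complete metric space. For $x_2\in\bar B_r(0)$ the whole segment $\{tx_2:t\in[0,1]\}$ lies in $\bar B_r(0)$, so the mean value inequality together with the bound $\|d_2\mathcal{G}[x_1,\cdot]\|\le\tfrac13$ gives $\|\mathcal{G}(x_1,x_2)-\mathcal{G}(x_1,0)\|\le\tfrac13\|x_2\|\le\tfrac13 r$; combined with $\|\mathcal{G}(x_1,0)\|\le\tfrac r2$ this yields $\|T_{x_1}(x_2)\|\le\tfrac56 r<r$, so $T_{x_1}$ maps $\bar B_r(0)$ into itself. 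The same mean value estimate shows $\|T_{x_1}(x_2)-T_{x_1}(x_2')\|\le\tfrac13\|x_2-x_2'\|$, i.e. $T_{x_1}$ is a contraction. Banach's fixed point theorem then produces the unique $x_2(x_1)\in\bar B_r(0)$ with $x_2(x_1)=\mathcal{G}(x_1,x_2(x_1))$.

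\emph{Smoothness.} To show that $x_1\mapsto x_2(x_1)$ is smooth I would invoke the uniform contraction principle: $\mathcal{G}$ is smooth on $X_1\times X_2$ and $T_{x_1}$ is a contraction with constant $\le\tfrac13$ uniformly in $x_1$, so its fixed point is a smooth function of the parameter. Equivalently, one may apply the ordinary implicit function theorem to $\Phi(x_1,x_2):=x_2-\mathcal{G}(x_1,x_2)$, noting that $d_2\Phi[x_1,x_2]=I-d_2\mathcal{G}[x_1,x_2]$ is invertible, with $\|(d_2\Phi[x_1,x_2])^{-1}\|\le(1-\tfrac13)^{-1}=\tfrac32$ by the Neumann series, since $\|d_2\mathcal{G}[x_1,x_2]\|\le\tfrac13<1$.

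\emph{The derivative estimate.} Differentiating the identity $x_2(x_1)=\mathcal{G}(x_1,x_2(x_1))$ in $x_1$ gives $\bigl(I-d_2\mathcal{G}[x_1,x_2(x_1)]\bigr)\,dx_2[x_1]=d_1\mathcal{G}[x_1,x_2(x_1)]$, hence $dx_2[x_1]=\bigl(I-d_2\mathcal{G}[x_1,x_2(x_1)]\bigr)^{-1}d_1\mathcal{G}[x_1,x_2(x_1)]$, and by the Neumann series bound $\|dx_2[x_1]\|\le\tfrac32\|d_1\mathcal{G}[x_1,x_2(x_1)]\|\le 2\|d_1\mathcal{G}[x_1,x_2(x_1)]\|$. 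One can also bypass differentiation by estimating the difference $x_2(x_1+h)-x_2(x_1)$ directly, splitting $\mathcal{G}(x_1+h,x_2(x_1+h))-\mathcal{G}(x_1,x_2(x_1))$ into a $d_2\mathcal{G}$-contribution and a $d_1\mathcal{G}$-contribution and absorbing the former into the left-hand side.

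\emph{Main obstacle.} The fixed point argument and the derivative bound are entirely routine; the only genuine point requiring care is that $X_1$ and $X_2$ are merely closed convex subsets, not open subsets, of Banach spaces, so ``smooth'' must be read in an appropriate sense (Fr\'echet differentiability relative to the convex set, or smoothness of an extension to a neighborhood). This is why the cleanest route to the smoothness of $x_2(\cdot)$ is through the uniform contraction principle, where the parameter may range over an arbitrary topological space, rather than through the open-set implicit function theorem. The argument follows Groves \cite{MR4246394}.
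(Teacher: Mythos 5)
Your proof is correct. The paper itself states this lemma as a quoted result from Groves \cite{MR4246394} and offers no proof, so there is nothing internal to compare against; your argument is the standard contraction-mapping proof that the cited reference uses, and every step checks out: the self-map bound $\tfrac{r}{2}+\tfrac{r}{3}=\tfrac{5r}{6}\le r$, the contraction constant $\tfrac13$ obtained from the mean value inequality along segments (which lie in $\bar B_r(0)$ by convexity), and the Neumann-series bound $\|(I-d_2\mathcal{G})^{-1}\|\le\tfrac32\le 2$ yielding the stated derivative estimate. Your closing remark correctly identifies the only delicate point (differentiability relative to closed convex sets, handled by the uniform contraction principle rather than the open-set implicit function theorem).
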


To adapt the above implicit function theorem to the equation \eqref{UTwoImplicit}, we choose the function spaces $\mathcal{E}_1 = \mathcal{X}_1 = \chi(D)H^2$ and $\mathcal{E}_2 =\mathcal{X}_2 = (1-\chi(D))H^2$ and the sets
\begin{equation*}
 X_1 = \{U_1\in \mathcal{X}_1: \| U_1\|_{\mathfrak{E}_\omega}\leq R_1 \}, \quad X_2 = \{U_3\in \mathcal{X}_2: \| U_3\|_{H^2}\leq R_2 \}. 
\end{equation*}
For the given small ball $B_M$ in $\mathcal{Z}$, by \eqref{LOneBound}, we pick $R_1< \frac{1}{2}\epsilon^{-\frac{1}{2}}M< \epsilon^{\frac{1}{2}} \ll 1$ such that $\| \hat{U}_1\|_{L^1}< \frac{M}{2}$.
The value of $R_2$ is chosen such that $\|F(U_1)+U_3\|_{H^2}<\frac{M}{2}$ for all $U_1\in X_1$ and $U_3\in X_2$.
As a consequence, $U = U_1+U_2 \in B_M$.

Let $x_1 = U_1$, $x_2 = U_3$, and $\mathcal{G}(U_1, U_3)$ be the right-hand side of \eqref{UTwoImplicit}.
We further write $\mathcal{G}(U_1, U_3) = \mathcal{G}_1(U_1, U_3) + \mathcal{G}_2(U_1, U_3) + \mathcal{G}_3(U_1, U_3)$, where $U =U_1+ U_2$ and
\begin{align*}
 \mathcal{G}_1(U_1, U_3) =& -\frac{1-\chi(D)}{L(D)}\tilde{R}(U), \\
  \mathcal{G}_2(U_1, U_3) =& \frac{1-\chi(D)}{L(D)}\Big[ -\frac{\sigma}{2}\partial_\alpha (U_\alpha)^3+ \sigma \left((|D|U)^2U_\alpha\right)_\alpha
- \sigma |D|\left(|D|UU_\alpha^2\right)\\
&- \frac{\sigma}{2}|D|(|D|U)^3+\frac{\gamma^2}{2}U|D|U^2 - \frac{\gamma^2}{2}U^2|D|U - \frac{\gamma^2}{6}|D|U^3\Big], \\
     \mathcal{G}_3(U_1, U_3) =&  F(U)-F(U_1). 
 \end{align*}

In the following, we estimate $\mathcal{G}_i(U_1, U_3)$, $d_1\mathcal{G}_i(U_1, U_3)$ and $d_2\mathcal{G}_i(U_1, U_3)$ successively for $i = 1,2,3$ in order to apply Lemma \ref{t:Implicit}.
Before that, we first give an estimate for $F(U_1)$.
\begin{lemma} 
For each $U_1\in X_1$, 
\begin{equation}
\|F(U_1) \|_{H^2} \lesssim \epsilon^{\frac{1}{2}} \|U_1\|_{\mathfrak{E}_\omega}^2, \quad \|dF[U_1] \|_{\mathcal{L}(\mathcal{X}_1, \mathcal{X}_2)} \lesssim \epsilon^{\frac{1}{2}} \|U_1\|_{\mathfrak{E}_\omega}. \label{FUOneEstimate}
\end{equation}
As a consequence of $R_1 \ll 1$, $\|U_2\|_{H^2}\lesssim \epsilon^{\frac{1}{2}} \|U_1\|_{\mathfrak{E}_\omega}+ \|U_3 \|_{H^2}$.
\end{lemma}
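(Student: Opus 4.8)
The plan is to read off the needed bounds directly from the explicit formula \eqref{FUFormula}, using that $F(U_1)$ is, up to the smoothing Fourier multiplier $(1-\chi(D))L(D)^{-1}$, a sum of quadratic expressions in $U_1$ carrying at most two extra derivatives. First I would record that the operator $f \mapsto \mathcal{F}^{-1}[(1-\chi(\xi))\ell(\xi)^{-1}\hat f(\xi)]$ is bounded from $L^2(\mathbb{R})$ to $\mathcal{X}_2 = (1-\chi(D))H^2$, as already noted in the text; the point is that $\ell(\xi) \gtrsim \langle \xi\rangle^2$ away from the support of $\chi$, since $\ell$ is a nonnegative symbol vanishing only at $\pm\omega$ and growing quadratically. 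Hence it suffices to bound the bracketed quantity
\begin{equation*}
  \frac{\gamma^2}{2}U_1^2 + g U_1 |D|U_1 + \frac{1}{2} g |D| U_1^2 - \frac{\sigma}{2}|D|\bigl( (|D|U_1)^2 \bigr)
\end{equation*}
in $L^2$. Each term is of the schematic form $|D|^{a}\bigl( (|D|^{b}U_1)(|D|^{c}U_1)\bigr)$ with $a+b+c \le 2$, so after moving derivatives around it is controlled by $\|U_1\|_{H^2}\|U_1\|_{W^{2,\infty}}$ or $\|U_1\|_{H^2}\|\hat U_1\|_{L^1}$-type products.

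Next I would invoke the frequency-localization estimates \eqref{LOneBound}: for $U_1 \in \mathcal{X}_1$ one has $\|U_1\|_{W^{n,\infty}} \lesssim \epsilon^{1/2}\|U_1\|_{\mathfrak{E}_\omega}$ and $\|\hat U_1\|_{L^1}\lesssim \epsilon^{1/2}\|U_1\|_{\mathfrak{E}_\omega}$, together with \eqref{HsEkBound} which gives $\|U_1\|_{H^2}\lesssim \|U_1\|_{\mathfrak{E}_\omega}$ (note that the Sobolev norm and the scaled norm are comparable because $U_1$ is supported in bands of width $2\delta$ around $\pm|\omega|$, where $\langle\xi\rangle$ is bounded above and below). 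Applying these to the product estimate above yields $\|(\cdot)\|_{L^2}\lesssim \epsilon^{1/2}\|U_1\|_{\mathfrak{E}_\omega}^2$, and composing with the bounded smoothing operator gives the first inequality of \eqref{FUOneEstimate}. For the differential $dF[U_1](V_1)$, I would differentiate \eqref{FUFormula} termwise; $dF[U_1]$ is linear in $V_1$ with one factor of $U_1$ left over in each term, so the same product estimates (putting $U_1$ in $W^{2,\infty}$ and $V_1$ in $H^2$, or vice versa) give $\|dF[U_1](V_1)\|_{H^2}\lesssim \epsilon^{1/2}\|U_1\|_{\mathfrak{E}_\omega}\|V_1\|_{\mathfrak{E}_\omega}$, which is the claimed operator-norm bound since $\|V_1\|_{H^2}\approx\|V_1\|_{\mathfrak{E}_\omega}$ on $\mathcal{X}_1$.

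Finally, for the consequence, I would combine $U_2 = F(U_1) + U_3$ with the just-established bound $\|F(U_1)\|_{H^2}\lesssim \epsilon^{1/2}\|U_1\|_{\mathfrak{E}_\omega}^2 \le \epsilon^{1/2} R_1 \|U_1\|_{\mathfrak{E}_\omega} \lesssim \epsilon^{1/2}\|U_1\|_{\mathfrak{E}_\omega}$ (using $R_1 \ll 1$) and the triangle inequality to conclude $\|U_2\|_{H^2}\lesssim \epsilon^{1/2}\|U_1\|_{\mathfrak{E}_\omega} + \|U_3\|_{H^2}$. The only mildly delicate point — the "main obstacle," such as it is — is the careful bookkeeping of derivatives in the quadratic terms: one must verify that none of the four bracketed terms in \eqref{FUFormula} carries more than two net derivatives onto a single factor so that the $H^2\times W^{2,\infty}$ product estimate genuinely closes, and in particular that the outer $|D|$ in the last two terms is absorbed by the $L^2\to H^2$ smoothing of $(1-\chi(D))L(D)^{-1}$ rather than needing to land on $U_1$. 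Everything else is a routine application of the algebra of $H^2$ as a Banach algebra (in one dimension) and the scaled-norm estimates \eqref{LOneBound}--\eqref{HsEkBound}.
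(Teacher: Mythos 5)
Your proposal is correct and follows essentially the same route as the paper: bound the quadratic bracket in \eqref{FUFormula} in $L^2$ via $\|U_1\|_{W^{2,\infty}}\|U_1\|_{H^2}$-type product estimates, use the $L^2\to H^2$ boundedness of $(1-\chi(D))L(D)^{-1}$, invoke \eqref{LOneBound} for the factor of $\epsilon^{1/2}$, and differentiate termwise for the operator bound. The only nitpick is that $\|V_1\|_{H^2}$ and $\|V_1\|_{\mathfrak{E}_\omega}$ are not equivalent on $\mathcal{X}_1$ (only $\|V_1\|_{H^2}\lesssim\|V_1\|_{\mathfrak{E}_\omega}$ holds, because of the $\epsilon^{-2}$ weight), but the inequality goes in the direction you need, so the argument is unaffected.
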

\begin{proof}
It follows that for the $H^2$ norm of $F(U_1)$,
\begin{equation*}
\| F(U_1)\|_{H^2} \lesssim \|U_1^2 + U_1|D|U_1 \|_{L^2} + \|U_1^2\|_{\dot{H}^1} + \|(|D|U_1)^2\|_{H^1} \lesssim \|U \|_{W^{2,\infty}}\|U\|_{H^1}\lesssim \epsilon^{\frac{1}{2}} \|U_1\|_{\mathfrak{E}_\omega}^2.
\end{equation*} 
Taking the derivative,
\begin{equation*}
dF[U_1](V) = -\frac{1-\chi(D)}{L(D)}\left[ \gamma^2U_1 V +gV|D|U_1+gU_1|D|V + g|D|(U_1 V) -\sigma|D|(|D|U_1 |D|V)\right]. 
\end{equation*}
Hence, for any $V\in \mathcal{X}_1$,
\begin{equation*}
\|dF[U_1]V \|_{H^2} \lesssim \| U_1\|_{W^{2,\infty}}\|V\|_{H^2} \lesssim \epsilon^{\frac{1}{2}} \|U_1\|_{\mathfrak{E}_\omega}\|V\|_{H^2},
\end{equation*}
which gives the second estimate of \eqref{FUOneEstimate}.
\end{proof}

\begin{lemma} \label{t:GOneEstimate}
For each $U_1\in X_1$ and $U_3\in X_2$,
\begin{align*}
&\|\mathcal{G}_1(U_1, U_3) \|_{H^2} \lesssim (\epsilon^{\frac{1}{2}}\|U_1\|_{\mathfrak{E}_\omega}+ \|U_3\|_{H^2})^3 (\|U_1\|_{\mathfrak{E}_\omega}+ \|U_3\|_{H^2}), \\
&\|d_1\mathcal{G}_1(U_1, U_3) \|_{\mathcal{L}(\mathcal{X}_1, \mathcal{X}_2)}\lesssim (\epsilon^{\frac{1}{2}}\|U_1\|_{\mathfrak{E}_\omega}+ \|U_3\|_{H^2})^3(1+ \epsilon^{\frac{1}{2}}\|U_1\|_{\mathfrak{E}_\omega}), \\
&\|d_2\mathcal{G}_1(U_1, U_3) \|_{\mathcal{L}(\mathcal{X}_2, \mathcal{X}_2)}\lesssim (\epsilon^{\frac{1}{2}}\|U_1\|_{\mathfrak{E}_\omega}+ \|U_3\|_{H^2})^3. 
\end{align*}
\end{lemma}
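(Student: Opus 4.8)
The plan is to reduce all three bounds to the two estimates on $\tilde R$ in Lemma~\ref{LemmaThreeTwo}, composed with the fixed smoothing operator $f\mapsto \mathcal F^{-1}\big[(1-\chi(\xi))\ell(\xi)^{-1}\hat f(\xi)\big]$, which (as noted after \eqref{UTwoImplicit}) is bounded from $L^2(\mathbb R)$ into $\mathcal X_2\subset H^2$. Since $\mathcal G_1(U_1,U_3)=-\frac{1-\chi(D)}{L(D)}\tilde R(U)$ with $U=U_1+F(U_1)+U_3$, the first estimate is immediate: $\|\mathcal G_1(U_1,U_3)\|_{H^2}\lesssim\|\tilde R(U)\|_{L^2}\lesssim\|U\|_{\mathcal Z}^3\|U\|_{H^2}$. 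It then remains to convert $\|U\|_{\mathcal Z}$ and $\|U\|_{H^2}$ into quantities in $\|U_1\|_{\mathfrak E_\omega}$ and $\|U_3\|_{H^2}$. Using $U_2=F(U_1)+U_3$ together with \eqref{FUOneEstimate} and the frequency-localization bound \eqref{LOneBound}, I would get $\|U\|_{\mathcal Z}=\|\hat U_1\|_{L^1}+\|U_2\|_{H^2}\lesssim\epsilon^{1/2}\|U_1\|_{\mathfrak E_\omega}+\|U_3\|_{H^2}$, while the plain Sobolev norm only yields $\|U\|_{H^2}\lesssim\|U_1\|_{\mathfrak E_\omega}+\|U_3\|_{H^2}$ via \eqref{HsEkBound}. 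Multiplying these gives exactly the stated right-hand side; the key structural point is that the $\epsilon^{1/2}$ gain is available in the $\mathcal Z$-norm but not in the $H^2$-norm, which is why the last factor carries no $\epsilon^{1/2}$.

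For the $U_1$-derivative I would differentiate $U=U_1+F(U_1)+U_3$ by the chain rule: for $V\in\mathcal X_1$, $d_1\mathcal G_1[U_1,U_3](V)=-\frac{1-\chi(D)}{L(D)}\,d\tilde R[U]\big(V+dF[U_1](V)\big)$. Boundedness of the smoothing operator and the derivative bound in \eqref{RemainderCapillaryTwo} give $\|d_1\mathcal G_1[U_1,U_3](V)\|_{H^2}\lesssim(\|U\|_{\mathcal Z}+\|U\|_{W^{1,\infty}})^3\,\|V+dF[U_1](V)\|_{H^2}$. Then $\|V+dF[U_1](V)\|_{H^2}\le(1+\|dF[U_1]\|)\|V\|_{H^2}\lesssim(1+\epsilon^{1/2}\|U_1\|_{\mathfrak E_\omega})\|V\|_{H^2}$ by \eqref{FUOneEstimate}, and $\|U\|_{W^{1,\infty}}\lesssim\|U_1\|_{W^{1,\infty}}+\|U_2\|_{H^2}\lesssim\epsilon^{1/2}\|U_1\|_{\mathfrak E_\omega}+\|U_3\|_{H^2}$ by the Sobolev embedding $H^2\hookrightarrow W^{1,\infty}$ and \eqref{LOneBound}, so $\|U\|_{\mathcal Z}+\|U\|_{W^{1,\infty}}\lesssim\epsilon^{1/2}\|U_1\|_{\mathfrak E_\omega}+\|U_3\|_{H^2}$, and the second bound follows. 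The $U_3$-derivative is the same but simpler: since $\partial U/\partial U_3$ is the identity on $\mathcal X_2$, for $Y\in\mathcal X_2$ one has $d_2\mathcal G_1[U_1,U_3](Y)=-\frac{1-\chi(D)}{L(D)}\,d\tilde R[U](Y)$, and the same two ingredients give $\|d_2\mathcal G_1[U_1,U_3](Y)\|_{H^2}\lesssim(\epsilon^{1/2}\|U_1\|_{\mathfrak E_\omega}+\|U_3\|_{H^2})^3\|Y\|_{H^2}$. Smoothness of $\mathcal G_1$ in $(U_1,U_3)$ is inherited from the smoothness of $\tilde R$ and of $F$.

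No step here is genuinely deep; all the analytic content is already packed into Lemma~\ref{LemmaThreeTwo} and the earlier mapping properties of $(1-\chi(D))L(D)^{-1}$, and $\tilde R(U)\in L^2$ for $U\in B_M$ is exactly what Lemma~\ref{LemmaThreeTwo} supplies. The only thing demanding care is the bookkeeping: one must consistently replace $\|U\|_{\mathcal Z}$, $\|U\|_{H^2}$ and $\|U\|_{W^{1,\infty}}$ by the correct expressions in $\|U_1\|_{\mathfrak E_\omega}$ and $\|U_3\|_{H^2}$, using \eqref{LOneBound} and \eqref{FUOneEstimate} for the $\mathcal Z$- and $W^{1,\infty}$-norms and \eqref{HsEkBound} for the plain $H^2$-norm, since interchanging these tools would produce the wrong power of $\epsilon$ and spoil the later verification of the hypotheses of Lemma~\ref{t:Implicit}.
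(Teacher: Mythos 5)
Your proposal is correct and follows essentially the same route as the paper: the $H^2$ bound is obtained by combining the $L^2\to H^2$ boundedness of $(1-\chi(D))L(D)^{-1}$ with the estimates \eqref{RemainderCapillaryTwo} on $\tilde R$, and the derivative bounds follow from the chain rule together with the derivative estimate in \eqref{RemainderCapillaryTwo} and \eqref{FUOneEstimate}. Your write-up simply makes explicit the norm-conversion bookkeeping that the paper leaves implicit.
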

\begin{proof}
  We use \eqref{RemainderCapillaryTwo} to get 
\begin{equation*}
     \| (1-\chi(D))L^{-1}(D) \tilde{R}(U)\|_{H^2} \lesssim \|U\|_{\mathcal{Z}}^3\|U\|_{H^2}\lesssim (\epsilon^{\frac{1}{2}}\|U_1\|_{\mathfrak{E}_\omega}+ \|U_3\|_{H^2})^3 (\|U_1\|_{\mathfrak{E}_\omega}+ \|U_3\|_{H^2}).
\end{equation*}
The estimates for the derivative follow from the derivative estimate \eqref{RemainderCapillaryTwo}.
\end{proof}

\begin{lemma} \label{t:GThreeEstimate}
For each $U_1\in X_1$ and $U_3\in X_2$,
\begin{align*}
 &\|\mathcal{G}_2(U_1, U_3) \|_{H^2} \lesssim (\epsilon^{\frac{1}{2}}  \|U_1\|_{\mathfrak{E}_\omega} +\|U_3\|_{H^2})^2(\|U_1\|_{\mathfrak{E}_\omega}+\|U_3\|_{H^2}), \\ 
 &\|d_2\mathcal{G}_2(U_1, U_3) \|_{\mathcal{L}(\mathcal{X}_2, \mathcal{X}_2)}\lesssim (\epsilon^{\frac{1}{2}}  \|U_1\|_{\mathfrak{E}_\omega} +\|U_3\|_{H^2})^2,\\   
 &\|d_1\mathcal{G}_2(U_1, U_3) \|_{\mathcal{L}(\mathcal{X}_1, \mathcal{X}_2)}\lesssim (\epsilon^{\frac{1}{2}}  \|U_1\|_{\mathfrak{E}_\omega} +\|U_3\|_{H^2})(  \|U_1\|_{\mathfrak{E}_\omega} +\|U_3\|_{H^2}).
\end{align*}
\end{lemma}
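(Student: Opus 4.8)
The plan is to estimate the seven cubic terms constituting $\mathcal G_2$ one at a time, using that each has the schematic form $\dfrac{1-\chi(D)}{L(D)}\,\mathcal D\,N(U,U,U)$, where $U=U_1+U_2$ with $U_2=F(U_1)+U_3$, the operator $\mathcal D$ is the identity, $\partial_\alpha$, or $|D|$, and $N$ is trilinear with at most one derivative on each argument (for the three $\gamma^2$-terms there is at most one derivative in total). The analytic input is the mapping property of the multiplier: since $\ell(\xi)=g+c_*\gamma-c_*^2|\xi|+\sigma|\xi|^2$ vanishes only at $\xi=\pm\omega$, is positive at $\xi=0$ by the sign condition~\eqref{SignCon1}, and grows like $\sigma\xi^2$, the symbol $\dfrac{1-\chi(\xi)}{\ell(\xi)}$ is bounded and $O(|\xi|^{-2})$; hence $\dfrac{1-\chi(D)}{L(D)}$ maps $H^s$ into $(1-\chi(D))H^{s+2}\subset\mathcal X_2$ boundedly (the case $s=0$ is already recorded), so composed with one derivative it maps $H^s$ into $H^{s+1}$. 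Thus each term of $\mathcal G_2$ is bounded in $H^2$ by $\|N(U,U,U)\|_{H^1}$ (by $\|N(U,U,U)\|_{L^2}$ when $\mathcal D=\mathrm{Id}$), and everything reduces to product estimates for $N$.

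For the first bound I would use the tame (Moser) inequality: distributing the single extra derivative coming from the $H^1$-norm onto one factor, placed in $L^2$, with the other two in $L^\infty$, gives $\|N(U,U,U)\|_{H^1}\lesssim \|U\|_{W^{1,\infty}}^2\|U\|_{H^2}$. Combining this with $\|U\|_{W^{1,\infty}}\lesssim \epsilon^{1/2}\|U_1\|_{\mathfrak E_\omega}+\|U_3\|_{H^2}$ (from~\eqref{LOneBound}, Sobolev embedding, and the bound $\|U_2\|_{H^2}\lesssim \epsilon^{1/2}\|U_1\|_{\mathfrak E_\omega}+\|U_3\|_{H^2}$ recorded after~\eqref{FUOneEstimate}) and with $\|U\|_{H^2}\lesssim \|U_1\|_{\mathfrak E_\omega}+\|U_3\|_{H^2}$ (using~\eqref{HsEkBound}) yields the claimed estimate for $\|\mathcal G_2\|_{H^2}$.

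For the derivative estimates I would differentiate the trilinear forms term by term, using $d_2U[U_3](V_3)=V_3$ and $d_1U[U_1](V_1)=V_1+dF[U_1](V_1)$; in both cases $d_j\mathcal G_2$ is again a sum of $\dfrac{1-\chi(D)}{L(D)}\,\mathcal D$ applied to trilinear forms, one slot occupied by the test direction and the other two by $U$. The heart of the matter is, in the worst (capillary) terms $\partial_\alpha(U_\alpha)^3$, $((|D|U)^2U_\alpha)_\alpha$, $|D|(|D|U\,U_\alpha^2)$, $|D|(|D|U)^3$, to route the derivatives so that the available smallness is used optimally: a factor arising from $U_1$ is put in $L^\infty$, where frequency localization gives $\|U_1\|_{W^{k,\infty}}\lesssim \epsilon^{1/2}\|U_1\|_{\mathfrak E_\omega}$ for every $k$ by~\eqref{LOneBound} (so it stays small even when two derivatives pile onto it, and in the $\gamma^2$-terms even as a product of two such factors); a factor arising from $U_2$ is put in $L^2$ or $W^{1,\infty}$, where it is bounded by $\epsilon^{1/2}\|U_1\|_{\mathfrak E_\omega}+\|U_3\|_{H^2}$; and whatever $L^2$-derivatives are left over are absorbed by the test direction. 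For $d_2\mathcal G_2$ the test direction is $V_3$, controlled by $\|V_3\|_{H^2}$ through $\|V_3\|_{L^2}$ or $\|V_3\|_{W^{1,\infty}}$ irrespective of the routing, so each of the two $U$-slots donates a small factor and one obtains $(\epsilon^{1/2}\|U_1\|_{\mathfrak E_\omega}+\|U_3\|_{H^2})^2$. For $d_1\mathcal G_2$ the test direction is $V_1+dF[U_1]V_1$; by~\eqref{LOneBound} and~\eqref{FUOneEstimate} it is small in $W^{1,\infty}$ relative to $\|V_1\|_{\mathfrak E_\omega}$ but only of order one in $L^2$, so the cruder bound $(\epsilon^{1/2}\|U_1\|_{\mathfrak E_\omega}+\|U_3\|_{H^2})(\|U_1\|_{\mathfrak E_\omega}+\|U_3\|_{H^2})$ is the natural output — which is all that is needed, making the $d_1$-estimate the easier of the two.

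I expect the only genuine obstacle to be the bookkeeping in the last step — deciding, term by term and slot by slot, which factor goes into $L^2$, which into $L^\infty$, and where the left-over derivatives land, so that the various sources of smallness (the scaled $L^\infty$-bounds on $U_1$ and on $V_1$, the $H^2$-smallness of $U_2$ and of $dF[U_1]V_1$, and the $H^2$-norm of $V_3$) are each exploited in full. The multiplier estimate and the product inequalities are routine, and the computation closely parallels the treatment of $\mathcal G_1$ in Lemma~\ref{t:GOneEstimate}.
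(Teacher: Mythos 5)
Your proposal is correct and follows essentially the same route as the paper: reduce each cubic term via the boundedness of $(1-\chi(D))L^{-1}(D)$ (composed with the outer derivative) to an $L^2$/$H^1$ product estimate, place one factor in $L^2$ and the rest in $L^\infty$ or $W^{1,\infty}$ using \eqref{LOneBound} for the $U_1$ contributions and the $H^2$ control of $U_2=F(U_1)+U_3$, and obtain the derivative bounds by differentiating the trilinear forms with the test direction occupying one slot. The paper carries out exactly this bookkeeping, writing out $d_2\mathcal{G}_2$ explicitly via the chain rule and treating $d_1\mathcal{G}_2$ as analogous, just as you describe.
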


\begin{proof}
Using the fact that $(1-\chi(D))L^{-1}(D)$ is bounded from  $L^2$ to $H^2$ and \eqref{FUOneEstimate}, we compute,
\begin{align*}
 &\|(1-\chi(D))L^{-1}(D)\partial_\alpha (U_\alpha)^3 \|_{H^2} 
   \lesssim (\epsilon^{\frac{1}{2}}  \|U_1\|_{\mathfrak{E}_\omega} +\|U_3\|_{H^2})^2(\|U_1\|_{\mathfrak{E}_\omega}+\|U_3\|_{H^2}),\\
 & \|(1-\chi(D))L^{-1}(D)(U|D|U^2-U^2|D|U) \|_{H^2} \lesssim \|U|D|U^2 \|_{L^2} + \|U^2 |D|U \|_{L^2}\\
 \lesssim & \|U\|_{L^\infty}^2 \|U_\alpha\|_{L^2} \lesssim (\epsilon^{\frac{1}{2}}  \|U_1\|_{\mathfrak{E}_\omega} +\|U_3\|_{H^2})^2(\|U_1\|_{\mathfrak{E}_\omega}+\|U_3\|_{H^2}),\\
 & \|(1-\chi(D))L^{-1}(D)(|D|U^3) \|_{H^2} \lesssim \||D|U^3\|_{L^2}\lesssim \|U^2U_\alpha\|_{L^2}\\
 \lesssim & \|U\|_{L^\infty}^2 \|U_\alpha\|_{L^2} \lesssim (\epsilon^{\frac{1}{2}}  \|U_1\|_{\mathfrak{E}_\omega} +\|U_3\|_{H^2})^2(\|U_1\|_{\mathfrak{E}_\omega}+\|U_3\|_{H^2}).
\end{align*}
Combining these estimates, we obtain the first bound.

As for the expression for the operator $d_2\mathcal{G}_2(U_1, U_3)$, we apply the chain rule. 
\begin{align*}
d_2\mathcal{G}_2(U_1, U_3)V =&  \frac{1-\chi(D)}{L(D)}\Big[ -\frac{3\sigma}{2}\partial_\alpha(U^2_\alpha V_\alpha)
- \frac{\gamma^2}{2}V|D|U^2 \\
-& \gamma^2 U|D|(UV) + \gamma^2UV|D|U + \frac{\gamma^2}{2}U^2|D|V + \frac{\gamma^2}{2}|D|(U^2V) \Big],
\end{align*}
where $U = U_1 +F(U_1)+U_3$, for each $V\in \mathcal{X}_1$.
Hence,
\begin{align*}
& \|\partial_\alpha(U^2_\alpha V_\alpha) \|_{L^2} \lesssim (\epsilon^{\frac{1}{2}}  \|U_1\|_{\mathfrak{E}_\omega} +\|U_2\|_{H^2})^2 \|V\|_{H^2},\\
& \|(V|D|U^2+2U|D|(UV)-2UV|D|U +U^2|D|V+|D|(U^2V))\|_{L^2}\\
\lesssim& (\epsilon^{\frac{1}{2}}  \|U_1\|_{\mathfrak{E}_\omega} +\|U_2\|_{H^2})^2 \|V\|_{H^2}. 
\end{align*}
The estimate for $d_1\mathcal{G}_2(U_1, U_3)$ is similar as that of $d_2\mathcal{G}_2(U_1, U_3)$.
\end{proof}

\begin{lemma} \label{t:GTwoEstimate}
For each $U_1\in X_1$ and $U_2\in X_2$, $\mathcal{G}_3(U_1, U_3) = F(U)-F(U_1)$ satisfies the estimate
\begin{align*}
&\|\mathcal{G}_3(U_1, U_3) \|_{H^2} \lesssim \epsilon \|U_1 \|^3_{\mathfrak{E}_\omega} + \epsilon^{\frac{1}{2}}  \|U_1 \|^2_{\mathfrak{E}_\omega}\|U_3\|_{H^2}+ \epsilon^{\frac{1}{2}}  \|U_1 \|_{\mathfrak{E}_\omega}\|U_3\|_{H^2} + \|U_3\|^2_{H^2}, \\
&\|d_2 \mathcal{G}_3(U_1, U_3)\|_{\mathcal{L}^2(\mathcal{X}_2, \mathcal{X}_2))} \lesssim \epsilon^{\frac{1}{2}}\|U_1 \|_{\mathfrak{E}_\omega} + \|U_3\|_{H^2},\\
&\|d_1 \mathcal{G}_3(U_1, U_3)\|_{\mathcal{L}^2(\mathcal{X}_1, \mathcal{X}_2))} \lesssim \epsilon \| U_1\|^2_{\mathfrak{E}_\omega} +\epsilon^{\frac{1}{2}}\|U_1 \|_{\mathfrak{E}_\omega}\|U_3\|_{H^2} + \epsilon^{\frac{1}{2}}\|U_3\|_{H^2}.
\end{align*}
\end{lemma}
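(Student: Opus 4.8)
The plan is to exploit the fact, visible from \eqref{FUFormula}, that $F$ is a \emph{homogeneous quadratic} map. Write $F(v)=B(v,v)$, where $B$ is the symmetric bounded bilinear form obtained by polarization, namely $-(1-\chi(D))L(D)^{-1}$ applied to $\tfrac{\gamma^2}{2}v_1v_2+\tfrac{g}{2}(v_1|D|v_2+v_2|D|v_1)+\tfrac{g}{2}|D|(v_1v_2)-\tfrac{\sigma}{2}|D|(|D|v_1\,|D|v_2)$; then $dF[v](w)=2B(v,w)$, and with $U=U_1+U_2$, $U_2=F(U_1)+U_3$,
\[
\mathcal{G}_3(U_1,U_3)=F(U)-F(U_1)=2B(U_1,U_2)+B(U_2,U_2).
\]
So all three estimates reduce to bilinear bounds on $B$, for which the ingredients are: (a) boundedness of $(1-\chi(D))L(D)^{-1}$ from $L^2$ to $H^2$ (stated above), combined with $H^1$-algebra/Moser estimates that place the top-order term $|D|(|D|v_1|D|v_2)$ in $L^2$; these yield $\|B(v,w)\|_{H^2}\lesssim\|v\|_{H^2}\|w\|_{H^2}$. (b) By \eqref{LOneBound}, any input lying in $\mathcal{X}_1$ (and its $|D|$-derivatives, which remain in $\mathcal{X}_1$) can be moved into $W^{n,\infty}$ at cost $\epsilon^{1/2}\|\cdot\|_{\mathfrak{E}_\omega}$, so $\|B(u,w)\|_{H^2}\lesssim\epsilon^{1/2}\|u\|_{\mathfrak{E}_\omega}\|w\|_{H^2}$ for $u\in\mathcal{X}_1$ and \emph{any} $w\in H^2$. (c) From \eqref{FUOneEstimate}, $\|F(U_1)\|_{H^2}\lesssim\epsilon^{1/2}\|U_1\|_{\mathfrak{E}_\omega}^2$, hence $\|U_2\|_{H^2}\lesssim\epsilon^{1/2}\|U_1\|_{\mathfrak{E}_\omega}^2+\|U_3\|_{H^2}$ --- it is this sharp quadratic bound, not just the linear one recorded earlier, that is needed below.

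For the first estimate I would expand $\mathcal{G}_3=2B(U_1,F(U_1))+2B(U_1,U_3)+B(F(U_1),F(U_1))+2B(F(U_1),U_3)+B(U_3,U_3)$ and bound the five summands by $\epsilon\|U_1\|_{\mathfrak{E}_\omega}^3$, $\epsilon^{1/2}\|U_1\|_{\mathfrak{E}_\omega}\|U_3\|_{H^2}$, $\epsilon\|U_1\|_{\mathfrak{E}_\omega}^4\le\epsilon\|U_1\|_{\mathfrak{E}_\omega}^3$ (using $\|U_1\|_{\mathfrak{E}_\omega}\le R_1\ll1$), $\epsilon^{1/2}\|U_1\|_{\mathfrak{E}_\omega}^2\|U_3\|_{H^2}$, and $\|U_3\|_{H^2}^2$, which sum to exactly the claimed bound. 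For $d_2\mathcal{G}_3$, the chain rule with $\partial_{U_3}U=\mathrm{Id}$ gives $d_2\mathcal{G}_3[U_1,U_3](V)=2B(U,V)$ for $V\in\mathcal{X}_2$; writing this as $2B(U_1,V)+2B(F(U_1),V)+2B(U_3,V)$ and applying (b), (c), (a) gives $\|d_2\mathcal{G}_3\|\lesssim(\epsilon^{1/2}\|U_1\|_{\mathfrak{E}_\omega}+\|U_3\|_{H^2})\|V\|_{H^2}$.

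For $d_1\mathcal{G}_3$, the chain rule with $\partial_{U_1}U=\mathrm{Id}+dF[U_1]$ gives, after simplifying $2B(U,V)-2B(U_1,V)=2B(U_2,V)$,
\[
d_1\mathcal{G}_3[U_1,U_3](V)=2B(U_2,V)+4B\bigl(U,B(U_1,V)\bigr),\qquad V\in\mathcal{X}_1,
\]
and I would split $U=U_1+U_2$ in the outer slot of the second term and $U_2=F(U_1)+U_3$ throughout. The key point is that $B(U_1,V)$ already carries one factor $\epsilon^{1/2}$ (from $V\in\mathcal{X}_1$, by (b)); applying $B(U_1,\cdot)$ produces a second $\epsilon^{1/2}$ (from the outer $U_1$), so $\|B(U_1,B(U_1,V))\|_{H^2}\lesssim\epsilon\|U_1\|_{\mathfrak{E}_\omega}^2\|V\|$, while $\|B(U_2,B(U_1,V))\|_{H^2}\lesssim\|U_2\|_{H^2}\,\epsilon^{1/2}\|U_1\|_{\mathfrak{E}_\omega}\|V\|\lesssim(\epsilon\|U_1\|_{\mathfrak{E}_\omega}^3+\epsilon^{1/2}\|U_1\|_{\mathfrak{E}_\omega}\|U_3\|_{H^2})\|V\|$ by (c), and $\|B(U_2,V)\|_{H^2}$ with $V\in\mathcal{X}_1$ is $\lesssim\epsilon^{1/2}\|U_2\|_{H^2}\|V\|\lesssim(\epsilon\|U_1\|_{\mathfrak{E}_\omega}^2+\epsilon^{1/2}\|U_3\|_{H^2})\|V\|$ by (c). Collecting these and absorbing the cubic-in-$U_1$ leftover (since $\|U_1\|_{\mathfrak{E}_\omega}\ll1$) gives $\|d_1\mathcal{G}_3\|\lesssim\epsilon\|U_1\|_{\mathfrak{E}_\omega}^2+\epsilon^{1/2}\|U_1\|_{\mathfrak{E}_\omega}\|U_3\|_{H^2}+\epsilon^{1/2}\|U_3\|_{H^2}$.

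The lengthy-but-routine work is verifying (a) and (b) for the top-order capillary term $|D|(|D|v_1|D|v_2)$, tracking how the two derivatives gained from $L(D)^{-1}$ are spent and which factor sits in $L^\infty$. The one genuinely delicate point --- the only place a naive estimate falls short of the stated bound --- is obtaining the $\epsilon\|U_1\|_{\mathfrak{E}_\omega}^2$ term (rather than only $\epsilon^{1/2}\|U_1\|_{\mathfrak{E}_\omega}^2$) in $d_1\mathcal{G}_3$: this forces one to use both the sharp quadratic bound (c) on $F(U_1)$ and the observation that each nested $\mathcal{X}_1$-factor inside $B(U_1,B(U_1,V))$ independently contributes a power $\epsilon^{1/2}$.
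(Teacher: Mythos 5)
Your proposal is correct and follows essentially the same route as the paper: the paper's proof also exploits that $F$ is homogeneous quadratic, writing $\mathcal{G}_3$ as $(1-\chi(D))L(D)^{-1}\bigl[2m(U_1,F(U_1)+U_3)+m(F(U_1)+U_3,F(U_1)+U_3)\bigr]$ for a bilinear $m$ with at most three derivatives, and then invokes \eqref{LOneBound} and \eqref{FUOneEstimate} exactly as in your ingredients (a)--(c). Your write-up simply makes explicit the five-term expansion and the chain-rule computations for $d_1\mathcal{G}_3$ and $d_2\mathcal{G}_3$ that the paper leaves implicit.
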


\begin{proof}
Each term of 
\begin{equation*}
 \frac{\gamma^2}{2}U^2+gU|D|U + \frac{1}{2}g|D|U^2 -\frac{\sigma}{2}|D|(|D|U)^2
\end{equation*}
in the expression of $F(U)$ is a quadratic term of $U$ with at most three derivatives.
Hence, for each term in $\mathcal{G}_2(U_1, U_3)$, it can be expressed by
\begin{equation*}
    (1-\chi(D))L(D)^{-1}[2m(U_1, F(U_1) + U_3) +m(F(U_1) + U_3, F(U_1) + U_3)],
\end{equation*}
where $m(u,v)$ are bilinear terms with at most three derivatives that satisfy
\begin{equation*}
\|m(u,v)\|_{L^2} \lesssim (\|u\|_{\mathcal{Z}}+ \| u\|_{W^{1,\infty}})\|v \|_{H^2}, \quad \forall u,v\in H^2(\mathbb{R}).
\end{equation*}
Then the estimates follows directly from \eqref{LOneBound} and \eqref{FUOneEstimate}.
\end{proof}

Putting the results in Lemma \ref{t:GOneEstimate},  Lemma \ref{t:GThreeEstimate} and Lemma \ref{t:GTwoEstimate} together, for each $U_1\in X_1$, we choose $M$  and $\epsilon$ small enough.
Let $r(U_1) = C\epsilon\| U_1\|^2_{\mathfrak{E}_k}$ for some large constant $C$, then for $U_3 \in \overline{B}_{r(U_1)}(0)\subseteq B_{\frac{M}{2}}(0)\subseteq X_2$, 
\begin{equation*}
\|\mathcal{G}(U_1, 0)\|_{H^2} \lesssim \epsilon  \|U_1\|_{\mathfrak{E}_\omega}^3 \lesssim
 \frac{1}{2}r(U_1).
\end{equation*}
Moreover, 
\begin{align*}
&\|d_2 \mathcal{G}(U_1, U_3)\|_{\mathcal{L}(\mathcal{X}_2, \mathcal{X}_2)} \lesssim  (\epsilon^{\frac{1}{2}}  \|U_1\|_{\mathfrak{E}_\omega} +\|U_3\|_{H^2})(1+\epsilon^{\frac{1}{2}}  \|U_1\|_{\mathfrak{E}_\omega} +\|U_3\|_{H^2}) \ll\frac{1}{3}, \\
&\|d_1 \mathcal{G}(U_1, U_3)\|_{\mathcal{L}(\mathcal{X}_1, \mathcal{X}_2)} \lesssim   (\epsilon^{\frac{1}{2}}  \|U_1\|_{\mathfrak{E}_\omega} +\|U_3\|_{H^2})(\|U_1\|_{\mathfrak{E}_\omega} +\|U_3\|_{H^2}).
\end{align*}
It then follows from Lemma \ref{t:Implicit} and the assumption  $\|U_1\|_{\mathfrak{E}_\omega} \leq \epsilon^{\frac{1}{2}}$ that $U_3$ can be expressed as implicitly as a function of $U_1$:
\begin{proposition} \label{t:UTwoBound}
The equation \eqref{UTwoImplicit} has a unique solution $U_3 \in X_2$ which depends smoothly upon $U_1 \in X_1$.
In addition,
\begin{equation*}
\|U_3(U_1)\|_{H^2} \lesssim \epsilon\|U_1\|^2_{\mathfrak{E}_k}, \quad  \|dU_3(U_1)\|_{\mathcal{L}(\mathcal{X}_1, \mathcal{X}_2)} \leq \epsilon\|U_1\|_{\mathfrak{E}_k}.
\end{equation*}
This means that $U_2$, and thus $U$ itself, is an implicit function of $U_1$.
\end{proposition}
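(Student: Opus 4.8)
The plan is to read Proposition~\ref{t:UTwoBound} off of the abstract implicit function theorem, Lemma~\ref{t:Implicit}, applied with $\mathcal{E}_1=\mathcal{X}_1$, $\mathcal{E}_2=\mathcal{X}_2$, the closed convex sets $X_1,X_2$ fixed above, $x_1=U_1$, $x_2=U_3$, and $\mathcal{G}=\mathcal{G}_1+\mathcal{G}_2+\mathcal{G}_3$ equal to the right-hand side of \eqref{UTwoImplicit}. All of the genuine analysis has already been done: the multilinear estimates on the capillary remainder and the cubic terms are packaged in Lemmas~\ref{LemmaThreeTwo}, \ref{t:GOneEstimate}, \ref{t:GThreeEstimate}, and \ref{t:GTwoEstimate}, together with the bound \eqref{FUOneEstimate} on $F(U_1)$. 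What remains is to assemble these into the two hypotheses of Lemma~\ref{t:Implicit} with the radius function $r(U_1):=C\epsilon\|U_1\|_{\mathfrak{E}_\omega}^2$ for a large constant $C$, chosen so that $\overline{B}_{r(U_1)}(0)\subseteq B_{M/2}(0)\subseteq X_2$ for every $U_1\in X_1$, which is possible since $\|U_1\|_{\mathfrak{E}_\omega}\le R_1\ll\epsilon^{1/2}$.

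First I would check the smallness hypothesis $\|\mathcal{G}(U_1,0)\|_{H^2}\le \tfrac12 r(U_1)$. Setting $U_3=0$ in Lemmas~\ref{t:GOneEstimate}, \ref{t:GThreeEstimate}, and \ref{t:GTwoEstimate}, the contribution of $\mathcal{G}_1$ is $O(\epsilon^{3/2}\|U_1\|_{\mathfrak{E}_\omega}^4)$, that of $\mathcal{G}_2$ is $O(\epsilon\|U_1\|_{\mathfrak{E}_\omega}^3)$, and that of $\mathcal{G}_3$ is $O(\epsilon\|U_1\|_{\mathfrak{E}_\omega}^3)$; since $\|U_1\|_{\mathfrak{E}_\omega}\le R_1\ll 1$, the sum is $\lesssim \epsilon\|U_1\|_{\mathfrak{E}_\omega}^3\le\tfrac12 C\epsilon\|U_1\|_{\mathfrak{E}_\omega}^2=\tfrac12 r(U_1)$ once $C$ is taken large. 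Next I would check the contraction hypothesis $\|d_2\mathcal{G}[U_1,U_3]\|\le\tfrac13$ for $U_3\in\overline{B}_{r(U_1)}(0)$ and $U_1\in X_1$: each $d_2\mathcal{G}_i$ estimate carries at least one factor of $\epsilon^{1/2}\|U_1\|_{\mathfrak{E}_\omega}+\|U_3\|_{H^2}$, which is $\lesssim\epsilon+M$, so $\|d_2\mathcal{G}[U_1,U_3]\|\lesssim\epsilon+M<\tfrac13$ after shrinking $\epsilon$ and $M$.

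With both hypotheses in hand, Lemma~\ref{t:Implicit} produces, for each $U_1\in X_1$, a unique $U_3=U_3(U_1)\in\overline{B}_{r(U_1)}(0)$ solving \eqref{UTwoImplicit}, depending smoothly on $U_1$, and satisfying $\|dU_3[U_1]\|\le 2\|d_1\mathcal{G}[U_1,U_3(U_1)]\|$. The bound $\|U_3(U_1)\|_{H^2}\le r(U_1)\lesssim\epsilon\|U_1\|_{\mathfrak{E}_\omega}^2$ is then immediate. For the derivative bound I would substitute $\|U_3(U_1)\|_{H^2}\lesssim\epsilon\|U_1\|_{\mathfrak{E}_\omega}^2$ back into the $d_1\mathcal{G}_i$ estimates of the three lemmas; the dominant contribution is the $(\epsilon^{1/2}\|U_1\|_{\mathfrak{E}_\omega})(\|U_1\|_{\mathfrak{E}_\omega})$ piece of $d_1\mathcal{G}_2$, yielding $\|d_1\mathcal{G}[U_1,U_3(U_1)]\|\lesssim\epsilon^{1/2}\|U_1\|_{\mathfrak{E}_\omega}^2\le\epsilon\|U_1\|_{\mathfrak{E}_\omega}$, where the last inequality uses $\|U_1\|_{\mathfrak{E}_\omega}\le\epsilon^{1/2}$. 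Finally, since $U_2=F(U_1)+U_3(U_1)$ with $F$ given explicitly by \eqref{FUFormula}, $U_2$, and hence $U=U_1+U_2$, is a smooth function of $U_1$.

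The hard part here is not any single estimate but the accounting of powers of $\epsilon$ and $\|U_1\|_{\mathfrak{E}_\omega}$: one must verify that the smallness estimate closes with the \emph{same} quadratic radius $r\sim\epsilon\|U_1\|_{\mathfrak{E}_\omega}^2$ that one wants to propagate, and that the resulting derivative bound comes out as $O(\epsilon\|U_1\|_{\mathfrak{E}_\omega})$ rather than the weaker $O(\epsilon^{1/2}\|U_1\|_{\mathfrak{E}_\omega})$. Both points rely on systematically trading a spare factor $\|U_1\|_{\mathfrak{E}_\omega}\le\epsilon^{1/2}$ for an additional half-power of $\epsilon$, exploiting the frequency localization of $U_1$ through \eqref{LOneBound}.
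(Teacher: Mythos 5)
Your proposal is correct and follows essentially the same route as the paper: the paper likewise applies Lemma~\ref{t:Implicit} with the radius function $r(U_1)=C\epsilon\|U_1\|_{\mathfrak{E}_\omega}^2$, verifies $\|\mathcal{G}(U_1,0)\|_{H^2}\lesssim\epsilon\|U_1\|_{\mathfrak{E}_\omega}^3\lesssim\tfrac12 r(U_1)$ and $\|d_2\mathcal{G}\|\ll\tfrac13$ from Lemmas~\ref{t:GOneEstimate}--\ref{t:GTwoEstimate}, and then obtains the derivative bound from $\|dU_3[U_1]\|\le 2\|d_1\mathcal{G}[U_1,U_3(U_1)]\|$ together with $\|U_1\|_{\mathfrak{E}_\omega}\le\epsilon^{1/2}$. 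Your bookkeeping of the powers of $\epsilon$ and $\|U_1\|_{\mathfrak{E}_\omega}$ matches the paper's.
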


To further show that $U_2$ is in any $H^s(\mathbb{R})$, we just need to repeat the above analysis and work in  the function space $\mathcal{X}^s = H^s(\mathbb{R})$, $s\geq 2$.
Again it is decomposed into the direct sum
\begin{equation*}
\mathcal{X}^s = \mathcal{X}^s_1 + \mathcal{X}^s_2 : = \chi(D) \mathcal{X}^s + (1-\chi(D)) \mathcal{X}^s. 
\end{equation*}
Note that all $\chi_0(\epsilon D)H^s(\mathbb{R}), s\geq 0$ are  topologically equivalent.
The auxiliary function space $\mathcal{Z}^s$ is defined by
\begin{equation*}
 \mathcal{Z}^s = \{U \in \mathcal{S}^{'}: \| U\|_{\mathcal{Z}} := \| \hat{U}_1\|_{L^1} + \|U_2 \|_{H^s} < +\infty \},   
\end{equation*}
so that
\begin{equation*}
\|U\|_{L^\infty} \lesssim \|U\|_{\mathcal{Z}}\lesssim \|U_1\|_{\mathfrak{E}_\omega} + \|U_2\|_{H^s}.
\end{equation*}

In the proof of above auxiliary lemmas, one of the key inequality is 
\begin{equation*}
\left\|\prod_{i=1}^n f_i \right\|_{L^2} \leq \|f_j\|_{L^2} \prod_{i\neq j} \| f_i\|_{L^\infty}. 
\end{equation*}
This inequality is no longer true when $L^2$ is replaced by $H^{s}$, $s>0$. 
Instead, we have
\begin{equation}
\left\|\prod_{i=1}^n f_i \right\|_{H^s} \lesssim \sum_{j=1}^n \left(\|f_j\|_{H^s} \prod_{i\neq j} \| f_i\|_{L^\infty}\right), \quad s>0. \label{ProductHs}
\end{equation}
When $n=2$, inequality \eqref{ProductHs} is just Corollary 2.86 in \cite{MR2768550}.
When $n\geq 3$, it can be easily proved by induction from the base case $n\geq 2$.

Other than the inequality \eqref{ProductHs}, the rest of the analysis are similar as before.
By using the implicit function theorem Lemma \ref{t:Implicit} and the argument in Lemma \ref{t:UTwoImplicitUOne} below, we get the following qualitative result.
\begin{corollary} \label{t:UTwoHsExist}
For $U_1 \in X_1^s$, which is a sufficiently small ball in of the space $\mathcal{X}_1^s$, then $U_2$ exists and is in $\mathcal{X}_2^s \subset H^s(\mathbb{R})$.
\end{corollary}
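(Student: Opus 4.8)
The plan is to rerun the fixed-point reduction of Section~\ref{s:Fun} essentially verbatim, replacing $\mathcal{X}_2=(1-\chi(D))H^2(\mathbb{R})$ by $\mathcal{X}_2^s=(1-\chi(D))H^s(\mathbb{R})$ and the auxiliary space $\mathcal{Z}$ by $\mathcal{Z}^s$. First I would record that none of the frequency-localization estimates \eqref{LOneBound}--\eqref{HsEkBound} depend on the Sobolev exponent: since $\widehat{u}$ is supported in the fixed frequency band $\operatorname{supp}\chi$, one has $\|u\|_{H^{k}}\lesssim\|u\|_{\mathfrak{E}_\omega}$ for every $k\geq 0$ and $\|u\|_{W^{k,\infty}}\lesssim\epsilon^{1/2}\|u\|_{\mathfrak{E}_\omega}$ for every fixed $k$. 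Next, since $\ell(\xi)$ is an elliptic symbol of order $2$ away from $\pm\omega$ and $1-\chi$ vanishes near $\pm\omega$, the multiplier $f\mapsto\mathcal{F}^{-1}[(1-\chi(\xi))\ell(\xi)^{-1}\widehat f(\xi)]$ is bounded from $H^{s-2}(\mathbb{R})$ to $\mathcal{X}_2^s$. Thus the whole problem reduces to estimating the nonlinear right-hand side of \eqref{UTwoImplicit} in $H^{s-2}$.

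The one new ingredient is that the $L^2$ product inequality $\|\prod f_i\|_{L^2}\le\|f_j\|_{L^2}\prod_{i\neq j}\|f_i\|_{L^\infty}$ used throughout Lemmas~\ref{LemmaThreeTwo} and \ref{t:GOneEstimate}--\ref{t:GTwoEstimate} must be replaced by \eqref{ProductHs}. With that substitution I would re-derive, in order: (i) the capillary expansion of Lemma~\ref{LemmaThreeTwo} with $L^2$ replaced by $H^{s-2}$, noting that each summand of $\tilde R(U)$ is $\partial_\alpha$ or $|D|$ of a polynomial of degree $\geq 4$ in $U_\alpha$ and $|D|U$, so that after pulling the outer derivative out one applies \eqref{ProductHs} with the $H^{s-1}$ slot on a single factor and $\|U\|_{W^{1,\infty}}$ on the rest, giving $\|\tilde R(U)\|_{H^{s-2}}\lesssim\|U\|_{W^{1,\infty}}^{3}\|U\|_{H^s}$ and a matching bound for $d\tilde R$; (ii) the bound $\|F(U_1)\|_{H^s}\lesssim\epsilon^{1/2}\|U_1\|_{\mathfrak{E}_\omega}^2$ and $\|dF[U_1]\|_{\mathcal{L}(\mathcal{X}_1^s,\mathcal{X}_2^s)}\lesssim\epsilon^{1/2}\|U_1\|_{\mathfrak{E}_\omega}$, exactly as in \eqref{FUOneEstimate}, since $F(U_1)$ is the bounded multiplier applied to a quadratic expression in the frequency-localized $U_1$ with at most three derivatives; (iii) the $H^s$ analogues of Lemmas~\ref{t:GOneEstimate}, \ref{t:GThreeEstimate}, \ref{t:GTwoEstimate} for $\mathcal{G}_1,\mathcal{G}_2,\mathcal{G}_3$ and their differentials, whose right-hand sides are unchanged because in each monomial the distinguished factor carrying the $H^s$ norm is either $U_1$ (with $\|U_1\|_{H^s}\lesssim\|U_1\|_{\mathfrak{E}_\omega}$) or $U_2=F(U_1)+U_3$ (with $\|U_2\|_{H^s}\lesssim\epsilon^{1/2}\|U_1\|_{\mathfrak{E}_\omega}+\|U_3\|_{H^s}$ by (ii)), while all remaining factors contribute $L^\infty$ or $W^{1,\infty}$ norms that are small on $B_M$.

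With these estimates in hand I would apply the implicit function theorem, Lemma~\ref{t:Implicit}, with $\mathcal{E}_1=\mathcal{X}_1^s$, $\mathcal{E}_2=\mathcal{X}_2^s$, $X_1=\{U_1:\|U_1\|_{\mathfrak{E}_\omega}\le R_1\}$, $X_2=\{U_3:\|U_3\|_{H^s}\le R_2\}$, $\mathcal{G}$ the right-hand side of \eqref{UTwoImplicit}, and radius function $r(U_1)=C\epsilon\|U_1\|_{\mathfrak{E}_\omega}^2$; the same smallness choices of $M$ and $\epsilon$ as in Section~\ref{s:Fun} yield $\|\mathcal{G}(U_1,0)\|_{H^s}\le\tfrac12 r(U_1)$ and $\|d_2\mathcal{G}[U_1,U_3]\|\ll\tfrac13$ on $\overline{B}_{r(U_1)}(0)$. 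Mirroring the proof of Proposition~\ref{t:UTwoBound}, this produces a unique smooth map $U_1\mapsto U_3(U_1)\in\mathcal{X}_2^s$ solving $U_3=\mathcal{G}(U_1,U_3)$, so that $U_2=F(U_1)+U_3(U_1)\in\mathcal{X}_2^s\subset H^s(\mathbb{R})$; since $s\geq 2$ was arbitrary this also gives $U\in H^\infty$. The step I expect to require the most care is (i)--(iii): because the Taylor expansion of $J^{1/2}$ produces monomials of arbitrarily high degree in $U_\alpha$, one must check that \eqref{ProductHs} can always be applied with exactly one factor in $H^{s-1}$ and all others in a fixed $W^{k,\infty}$, so that the sum over which factor carries the top derivative does not spoil the smallness on $B_M$; once this bookkeeping is set up, everything else is a routine transcription of the $H^2$ arguments of Section~\ref{s:Fun}.
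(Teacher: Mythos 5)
Your proposal is correct and follows essentially the same route as the paper: the paper's own argument is precisely to rerun the Section~\ref{s:Fun} fixed-point reduction in $\mathcal{X}^s=H^s(\mathbb{R})$ and $\mathcal{Z}^s$, with the sole new ingredient being the replacement of the $L^2$ product inequality by the Moser-type estimate \eqref{ProductHs}, followed by another application of Lemma~\ref{t:Implicit}. Your write-up simply spells out the bookkeeping that the paper leaves as a sketch.
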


\section{Derivation of the reduced equation} \label{s:Approximate}
From Proposition \ref{t:UTwoBound}, $U_2$ is a function of $U_1$ when $\|U_1\|_{\mathfrak{E}_\omega} \ll 1$.
One can further decompose $U_1$ as the sum of positive and negative frequency pieces,
\begin{equation*}
U_1 = U_1^+ + U_1^- : = \chi^{+}(D)U + \chi^{-}(D)U,
\end{equation*}
where $U_1^+ = \overline{U_1^-}$, and the frequencies of $U_1^\pm$ are supported on $(\pm |\omega|-\delta, \pm |\omega|+ \delta)$.  
Hence, to solve the Babenko equation \eqref{e:BabenkoEqnLR}, it suffices to apply the multiplier $\chi^{+}(D)$ and solves the frequency-localized equation for $U_1^+$:
\begin{equation}
\begin{aligned}
    &\left(g+ c_{*}\gamma-c_{*}^2|D|+ \sigma |D|^2\right)U_1^+  + (c-c_{*})\gamma U_1^+ +(c^2_{*}-c^2)|D|U_1^+ \\
    = &\chi^{+}(D)\Big( \frac{\sigma}{2}|D|(|D|U)^2 - \frac{\gamma^2}{2}U^2
   -gU|D|U -\frac{1}{2}g|D|U^2- \frac{\sigma}{2}\partial_\alpha (U_\alpha)^3+\sigma \left((|D|U)^2U_\alpha\right)_\alpha \\
   &- \sigma |D|\left(|D|UU_\alpha^2\right)- \frac{\sigma}{2}|D|(|D|U)^3
     +\frac{\gamma^2}{2}U|D|U^2 - \frac{\gamma^2}{2}U^2|D|U 
    - \frac{\gamma^2}{6}|D|U^3 - \tilde{R}(U)\Big),
\end{aligned}  \label{UOneReduced}
\end{equation}
where $U = U_1 + U_2(U_1)$ and $\tilde{R}(U)$ is the remainder term given in \eqref{CapillaryExpandTwo}.
In this section,  we simplify the equation \eqref{UOneReduced} and derive a reduced equation.

To give a qualitative estimate for the higher-order remainder term, we use the notations in \cite{MR4246394}.
\begin{definition}[Symbol for higher order terms]
\begin{enumerate}
\item  The symbol $\mathcal{O}(\epsilon^a \| U_1\|^b_{\mathfrak{E}_\omega})$  denotes the class of smooth functions $\mathcal{R}: X_1 \rightarrow L^2(\mathbb{R})$  which satisfies the estimates
\begin{equation*}
\|\mathcal{R}(U_1) \|_{L^2} \lesssim \epsilon^a \| U_1\|^b_{\mathfrak{E}_\omega}, \quad \|d \mathcal{R}[U_1] \|_{\mathcal{L}(\mathcal{X}_1, L^2)} \lesssim \epsilon^a \| U_1\|^{b-1}_{\mathfrak{E}_\omega},
\end{equation*}
for each $U_1 \in X_1$ (where $a\geq 0$, $b\geq 0$).
The underscored notation $\underline{\mathcal{O}}_{+}(\epsilon^a \| U_1\|^b_{\mathfrak{E}_\omega})$  indicates additionally that the Fourier transform of $\mathcal{R}(U_1)$  lies in $\chi^{+}(D)$:
\begin{equation*}
\underline{\mathcal{O}}_{+}(\epsilon^a \| U_1\|^b_{\mathfrak{E}_\omega}) := \chi^{+}(D)\mathcal{O}(\epsilon^a \| U_1\|^b_{\mathfrak{E}_\omega}), 
\end{equation*}
where $\chi^{+}$ is the  characteristic function of the set $(|\omega|-\delta, |\omega|+\delta)$.
Similarly, for the symbol class $\underline{\mathcal{O}}(\epsilon^a \| U_1\|^b_{\mathfrak{E}_\omega})$, we just need to replace $\chi^+ (D)$ by $\chi(D)$ in the above definition.

\item  The symbol $\underline{\mathcal{O}}^\epsilon_n (\|u\|^a_{H^1})$  denotes $\chi_0(\epsilon D)\mathcal{R}(u)$, where $\mathcal{R}$  is a smooth function $B_R(0) \subset \chi_0(\epsilon D)H^1(\mathbb{R})\rightarrow H^n(\mathbb{R})$  which satisfies the estimates
\begin{equation*}
\|\mathcal{R}(u)\|_{H^n} \lesssim \|u\|^a_{H^1}, \quad \|d\mathcal{R}(u) \|_{\mathcal{L}(H^1, H^n)} \lesssim \|u\|^{a-1}_{H^1},   
\end{equation*}
for each $u \in B_R(0)$ with $a\geq 1$, $n\geq 0$.
Here $\chi_0$ is the characteristic function of $(-\delta, \delta)$.
\end{enumerate}   
\end{definition}

Using the definitions above, one can have the following  estimates.
\begin{lemma} \label{t:ApproximateIdentity}
We have the following identities for each $U_1 \in X_1$,
\begin{enumerate}
\item $|D|U_1^{+} = |\omega|U_1^{+} +\underline{\mathcal{O}}_{+}(\epsilon \| U_1\|_{\mathfrak{E}_\omega})$,\label{est:modDu1Plus}
\item $|D|(U_1^{+})^2 = 2|\omega|(U_1^{+})^2+\underline{\mathcal{O}}_{+}(\epsilon^{\frac{3}{2}} \| U_1\|^2_{\mathfrak{E}_\omega})$,\label{est:modDu1PlusSquared}
\item $|D|(U_1^{+}U_1^{-}) = \underline{\mathcal{O}}(\epsilon^{\frac{3}{2}} \| U_1\|^2_{\mathfrak{E}_\omega})$,\label{est:modDu1PlusU1Minus} 
\item $U^{+}_1 |D|U^{+}_1 = |\omega|(U_1^{+})^2+\underline{\mathcal{O}}_{+}(\epsilon^{\frac{3}{2}} \| U_1\|^2_{\mathfrak{E}_\omega})$,\label{est:u1PlusModDu1Plus}
\item $|D|(|D|U^+_1 |D|U^+_1) = 2|\omega|^3(U_1^{+})^2+\underline{\mathcal{O}}_+(\epsilon^{\frac{3}{2}} \| U_1\|^2_{\mathfrak{E}_\omega})$,\label{est:modDofModDproductPlus}
\item $|D|(|D|U^+_1 |D|U^-_1) = \underline{\mathcal{O}}(\epsilon^{\frac{3}{2}} \| U_1\|^2_{\mathfrak{E}_\omega})$,\label{est:modDofModDproductMinus}
\item $|D|((U_1^+)^2 U_1^-) = |\omega|(U_1^+)^2 U_1^- +\underline{\mathcal{O}}_{+}(\epsilon^{2} \| U_1\|^3_{\mathfrak{E}_\omega})$, \label{est:modDCubic}
\item $U_1^-|D|(U_1^+)^2 = 2|\omega|(U_1^+)^2 U_1^- + \underline{\mathcal{O}}_{+}(\epsilon^{2} \| U_1\|^3_{\mathfrak{E}_\omega})$, \label{est:eight}
\item $U_1^+ U_1^- |D|U_1^+ = |\omega|(U_1^+)^2 U_1^- +\underline{\mathcal{O}}_{+}(\epsilon^{2} \| U_1\|^3_{\mathfrak{E}_\omega})$, \label{est:night}
\item $U_1^+ U_1^+ |D|U_1^- = |\omega|(U_1^+)^2 U_1^- +\underline{\mathcal{O}}_{+}(\epsilon^{2} \| U_1\|^3_{\mathfrak{E}_\omega})$, \label{est:ten}
\item $U_1^{+}|D|(U_1^{+}U_1^{-}) = \underline{\mathcal{O}}_{+}(\epsilon^{2} \| U_1\|^3_{\mathfrak{E}_\omega})$,\label{est:XI} 
\item $\partial_\alpha U_1^+ = i|\omega|U_1^+ +\underline{\mathcal{O}}_{+}(\epsilon \| U_1\|_{\mathfrak{E}_\omega})$,\label{est:XII} 
\item $\partial_\alpha(U_{1,\alpha}^{+,2} U_{1,\alpha}^{-}) = -|\omega|^4 (U_1^+)U_1^- +\underline{\mathcal{O}}_{+}(\epsilon^{2} \| U_1\|^3_{\mathfrak{E}_\omega})$. \label{est:XIII}
\end{enumerate}
\end{lemma}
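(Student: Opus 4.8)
The plan is to prove all thirteen identities from a single mechanism. Every function appearing is a product of factors $U_1^\pm=\chi^\pm(D)U_1$, whose Fourier transforms are supported in $(\pm|\omega|-\delta,\pm|\omega|+\delta)$; hence a product of $k$ such factors has Fourier support in a $k\delta$-neighbourhood of the corresponding integer combination $\mu$ of $\pm|\omega|$, and any of the operators $m(D)\in\{|D|,\partial_\alpha\}$ acts on it as multiplication by a symbol $m(\xi)$ that on that support differs from the constant $m(\mu)$ by a quantity comparable to the total frequency deviation. The decoration on the symbol class ($\underline{\mathcal O}_+$ versus $\underline{\mathcal O}$) is read off from whether $\mu$ equals $\pm|\omega|$ (one-sided, as in items \eqref{est:modDu1Plus}, \eqref{est:modDCubic}, \eqref{est:XII}), $\pm2|\omega|$ (as in \eqref{est:modDu1PlusSquared}, \eqref{est:u1PlusModDu1Plus}, \eqref{est:modDofModDproductPlus}), or $0$ (as in \eqref{est:modDu1PlusU1Minus}, \eqref{est:modDofModDproductMinus}), and the main term, when $m(\mu)\ne0$, is just $m(\mu)$ times the monomial.

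First I would record the two elementary estimates on which everything rests. From the definition of $\|\cdot\|_{\mathfrak{E}_\omega}$ one has $\|(|\cdot|-|\omega|)\hat u\|_{L^2}\le\epsilon\|u\|_{\mathfrak{E}_\omega}$ for $u\in\mathcal X_1$, while \eqref{LOneBound} gives $\|\hat u\|_{L^1}\lesssim\epsilon^{1/2}\|u\|_{\mathfrak{E}_\omega}$. Then $|D|U_1^+-|\omega|U_1^+=\mathcal F^{-1}[(|\xi|-|\omega|)\hat U_1^+]$ has $L^2$ norm $\le\epsilon\|U_1\|_{\mathfrak{E}_\omega}$; since $U_1\mapsto(|D|-|\omega|)\chi^+(D)U_1$ is linear its differential obeys the same bound, which is exactly the content of $\underline{\mathcal O}_+(\epsilon\|U_1\|_{\mathfrak{E}_\omega})$, proving \eqref{est:modDu1Plus}, and \eqref{est:XII} follows identically because $\xi=|\xi|$ on $\operatorname{supp}\hat U_1^+$.

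For the identities in which a single operator acts on a $k$-fold product I would exploit the convolution structure directly. If $F=G_1\cdots G_k$ with each $G_j\in\{U_1^+,U_1^-\}$ and associated frequency $\mu$, then on $\operatorname{supp}\hat F$ the inequality $|\xi-\mu|\le\sum_j||\xi_j|-|\omega||$ holds under $\hat F=\hat G_1*\cdots*\hat G_k$, so $(m(D)-m(\mu))F$ splits into a sum of $k$ terms in each of which one factor carries the weight $(|\cdot|-|\omega|)$. Estimating that factor in $L^2$ by $\epsilon\|U_1\|_{\mathfrak{E}_\omega}$ and the remaining $k-1$ factors in $L^1$ by $\epsilon^{1/2}\|U_1\|_{\mathfrak{E}_\omega}$, Young's inequality $\|f_1*\cdots*f_k\|_{L^2}\le\|f_{j_0}\|_{L^2}\prod_{j\ne j_0}\|f_j\|_{L^1}$ yields an error of size $\epsilon^{(k+1)/2}\|U_1\|_{\mathfrak{E}_\omega}^k$, i.e. $\epsilon^{3/2}\|U_1\|_{\mathfrak{E}_\omega}^2$ for $k=2$ and $\epsilon^2\|U_1\|_{\mathfrak{E}_\omega}^3$ for $k=3$; this proves \eqref{est:modDu1PlusSquared}, \eqref{est:modDu1PlusU1Minus} and \eqref{est:modDCubic} outright, and the differential bound in each $\mathcal O$-class follows from the same computation with one factor replaced by $\chi^\pm(D)V$, which costs one power of $\|U_1\|_{\mathfrak{E}_\omega}$. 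The remaining identities — \eqref{est:u1PlusModDu1Plus}, \eqref{est:modDofModDproductPlus}, \eqref{est:modDofModDproductMinus}, \eqref{est:eight}, \eqref{est:night}, \eqref{est:ten}, \eqref{est:XI} and \eqref{est:XIII} — are obtained by composition: substitute an already-proved inner identity (one of \eqref{est:modDu1Plus}, \eqref{est:modDu1PlusSquared}, \eqref{est:modDu1PlusU1Minus}, \eqref{est:u1PlusModDu1Plus}, \eqref{est:XII}), expand the resulting product, and note that a cross term of the form $U_1^\pm\cdot(\text{lower-order symbol})$ again lies in the appropriate $\mathcal O$-class by the product estimate built from \eqref{LOneBound}, while $m(D)$ applied to such a cross term is controlled either by a further application of the convolution bound, when its frequency is bounded away from zero, or simply by $\delta\|\cdot\|_{L^2}$, when it is localized near the origin.

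I do not anticipate a conceptual obstacle; the work is bookkeeping. The one point that must be handled with care is the assignment of $\epsilon$-powers: it is essential that in each convolution it is precisely the frequency-weighted factor that is placed in $L^2$ (gaining a full $\epsilon$) while every other factor goes into $L^1$ (gaining only $\epsilon^{1/2}$), and that the symbol classes are genuinely stable under multiplication by the localized factors $U_1^\pm$. Both facts are immediate from the basic estimate and Young's inequality above, but both are easy to get wrong when the substitutions are composed; keeping the $+$/non-$+$ decorations consistent across all thirteen cases, in particular tracking that a product of two positive-frequency pieces lives near $2|\omega|$ rather than $|\omega|$, is the other routine-but-essential accounting task.
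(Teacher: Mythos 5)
Your proposal is correct and follows essentially the same route as the paper: items (1) and (12) by Plancherel and the definition of $\|\cdot\|_{\mathfrak{E}_\omega}$, the genuinely multilinear items via the convolution structure, the frequency triangle inequality $\bigl||\xi|-\mu\bigr|\le\sum_j\bigl||\xi_j|-|\omega|\bigr|$ on the relevant supports, and placing the weighted factor in $L^2$ while the rest are controlled through \eqref{LOneBound} (the paper writes this as an explicit Cauchy--Schwarz/Fubini computation rather than invoking Young's inequality, but it is the same estimate), with the remaining items obtained by substituting already-proved identities exactly as you describe. No gaps.
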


\begin{proof}
We compute using Plancherel's Theorem and the definition of $\mathfrak{E}_\omega$ norm,
\begin{align*}
& \||D|U_1^{+} - |\omega|U_1^{+} \|^2_{L^2} \leq \int_{\mathbb{R}} (|\xi|-|\omega|)^2 |\hat{U}_1^+ (\xi)|^2 \,d\xi \leq \epsilon^2 \| U_1\|^2_{\mathfrak{E}_\omega}, \\ 
& \||D|(U_1^{+})^2 - 2|\omega|(U_1^{+})^2 \|^2_{L^2} \leq \int_{\mathbb{R}^2} (|\xi|-2|\omega|)^2 |\hat{U}_1^{+}(\xi -\zeta)|^2 |\hat{U}_1^{+}(\zeta)|^2 \, d\zeta d\xi \\
\lesssim & \int_{\mathbb{R}^2} (|\xi-\zeta|-|\omega|)^2|\hat{U}_1^{+}(\xi -\zeta)|^2 |\hat{U}^{+}_1(\zeta)|^2 \, d\zeta d\xi + \int_{\mathbb{R}^2} (|\zeta|-|\omega|)^2 |\hat{U}_1^{+}(\xi -\zeta)|^2|\hat{U}^{+}_1(\zeta)|^2 \,d\zeta d\xi \\
\lesssim & \epsilon^2 \| U_1\|^2_{\mathfrak{E}_\omega} \|\hat{U}_1 \|_{L^\infty}^2 \lesssim \epsilon^3 \| U_1\|^4_{\mathfrak{E}_\omega} , \\
& \||D|(U_1^{+}U_1^{-}) \|^2_{L^2} \leq  \int_{\mathbb{R}^2}|\xi|^2 |\hat{U}_1^+ (\xi -\zeta)|^2 |\hat{U}_1^- (\zeta)|^2 \, d\zeta d\xi\\
\lesssim &  \int_{\mathbb{R}^2} (|\xi-\zeta|-|\omega|)^2|\hat{U}_1^{+}(\xi -\zeta)|^2 |\hat{U}^{-}_1(\zeta)|^2 \, d\zeta d\xi + \int_{\mathbb{R}^2} (|\zeta|-|\omega|)^2 |\hat{U}_1^{+}(\xi -\zeta)|^2|\hat{U}^{-}_1(\zeta)|^2 \,d\zeta d\xi \\
\lesssim & \epsilon^2 \| U_1\|^2_{\mathfrak{E}_\omega} \|\hat{U}_1 \|_{L^\infty}^2 \lesssim \epsilon^3 \| U_1\|^4_{\mathfrak{E}_\omega},
\end{align*}
where for the last inequality we use \eqref{LOneBound} and the fact that
\begin{equation*}
  |\xi|^2 \leq (|\xi -\zeta | + |\zeta|)^2 \leq (|\xi -\zeta|- |\omega|)^2 + (|\zeta|-|\omega|)^2.  
\end{equation*}
For the fourth estimate, we use the result in~\eqref{est:modDu1Plus} to get
\begin{equation*}
\|U^{+}_1 |D|U^{+}_1 - |\omega|(U_1^{+})^2 \|_{L^2} \leq \|U^{+}_1 \|_{L^\infty}^2 \||D|U_1^{+} - |\omega|U_1^{+} \|^2_{L^2} \leq \epsilon^3 \| U_1\|^4_{\mathfrak{E}_\omega}.
\end{equation*}
Applying the projection either $\chi(D)$ or $\chi^{+}(D)$ gives the first four estimates.
Identities~\eqref{est:modDofModDproductPlus} and~\eqref{est:modDofModDproductMinus} follow directly from~\eqref{est:modDu1PlusSquared} and~\eqref{est:modDu1PlusU1Minus}  as well as the fact that $\||D|U_1^+ \|_{\mathfrak{E}_\omega} \approx |\omega|\|U_1^+ \|_{\mathfrak{E}_\omega}$. 

To prove \eqref{est:modDCubic}, we write
\begin{align*}
& \||D|((U_1^+)^2 U_1^-) - |\omega|(U_1^+)^2 U_1^- \|^2_{L^2}\\
\leq& \int_{\mathbb{R}^3} (|\xi|-|\omega|)^2 |\hat{U}_1^{+}(\xi-\eta)|^2 |\hat{U}_1^{+}(\eta-\zeta)|^2 |\hat{U}_1^{-}(\zeta)|^2 \,d\xi d\eta d\zeta\\
 \leq&  \int_{\mathbb{R}^3} (|\xi -\eta|-|\omega|)^2 |\hat{U}_1^{+}(\xi-\eta)|^2 |\hat{U}_1^{+}(\eta-\zeta)|^2 |\hat{U}_1^{-}(\zeta)|^2 \,d\xi d\eta d\zeta \\
&+  \int_{\mathbb{R}^3} (|\eta -\zeta|-|\omega|)^2 |\hat{U}_1^{+}(\xi-\eta)|^2 |\hat{U}_1^{+}(\eta-\zeta)|^2 |\hat{U}_1^{-}(\zeta)|^2 \,d\xi d\eta d\zeta \\
& +  \int_{\mathbb{R}^3} (|\zeta|-|\omega|)^2 |\hat{U}_1^{+}(\xi-\eta)|^2 |\hat{U}_1^{+}(\eta-\zeta)|^2 |\hat{U}_1^{-}(\zeta)|^2 \,d\xi d\eta d\zeta \\
\lesssim& \epsilon^2 \| U_1\|^2_{\mathfrak{E}_\omega} \|\hat{U}_1 \|_{L^\infty}^4 \lesssim \epsilon^4 \|U_1 \|^4_{\mathfrak{E}_\omega}.
\end{align*}
Identity \eqref{est:eight} follows from \eqref{est:modDu1PlusSquared}, identity \eqref{est:night} follows from \eqref{est:u1PlusModDu1Plus}, and identity \eqref{est:ten} is due to \eqref{est:modDu1Plus}.
Identity \eqref{est:XI} comes from \eqref{est:modDu1PlusU1Minus}.

To prove \eqref{est:XII}, we compute
\begin{equation*}
\|\partial_\alpha U_1^+ - i|\omega|U_1^+ \|_{L^2}^2 \leq \int_{\mathbb{R}} (\xi - |\omega|)^2 |\hat{U}_1^+(\xi)|^2 d\xi \leq \epsilon^2 \| U_1\|^2_{\mathfrak{E}_\omega}.
\end{equation*}
Identity \eqref{est:XIII} is a direct consequence of the identity \eqref{est:XII}.
\end{proof}

For the last two terms on the right-hand side of \eqref{UOneReduced}, we have the following result.
\begin{lemma}
Let $U_1\in X_1$, then  
\begin{equation}
\begin{aligned}
(c-c_{*})\gamma U_1^+ +(c^2_{*}-c^2)|D|U_1^+ &= \pm \epsilon^2 (\gamma -2c_{*}|\omega|) U_1^+  + \underline{\mathcal{O}}_{+}(\epsilon^4 \| U_1\|_{\mathfrak{E}_\omega}) \\
& = \epsilon ^2 \sqrt{\gamma^2-4\omega(g+\sigma \omega^2)}U_1^+  + \underline{\mathcal{O}}_{+}(\epsilon^4 \| U_1\|_{\mathfrak{E}_\omega}),
\end{aligned}
\label{ApproximateOne}
\end{equation}
where the choice of $\pm$ depends on whether $c_*$ is $c_1$ or $c_2$.
\end{lemma}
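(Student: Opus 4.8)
The plan is to read off both equalities from an elementary computation with the linearized dispersion symbol, using only the frequency localization of $U_1^+$ and the defining relation for the pair $(\omega, c_*)$. Write $\ell_c(\xi) := g + c\gamma - c^2|\xi| + \sigma|\xi|^2$ for the symbol of the Babenko operator linearized at an arbitrary velocity $c$, so that the symbol $\ell(\xi)$ of $L(D) = g + c_*\gamma - c_*^2|D| + \sigma|D|^2$ is $\ell_{c_*}(\xi)$. Then $(c-c_*)\gamma\,U_1^+ + (c_*^2-c^2)|D|U_1^+ = \bigl(\ell_c(D) - L(D)\bigr)U_1^+$. Since $c = c_* \pm\epsilon^2$ — the $+$ sign for $c_* = c_1$ and the $-$ sign for $c_* = c_2$ — we have $c - c_* = \pm\epsilon^2$ and $c^2 - c_*^2 = (c-c_*)(c+c_*) = \pm 2c_*\epsilon^2 + \epsilon^4$, hence
\[
\ell_c(\xi) - \ell(\xi) \;=\; \pm\epsilon^2\bigl(\gamma - 2c_*|\xi|\bigr) - \epsilon^4|\xi|.
\]

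Next I would evaluate this multiplier on $U_1^+$, whose Fourier transform is supported near $\xi = |\omega|$. Splitting $\gamma - 2c_*|\xi| = (\gamma - 2c_*|\omega|) - 2c_*(|\xi|-|\omega|)$ and invoking the approximate identity \eqref{est:modDu1Plus}, $|D|U_1^+ = |\omega|U_1^+ + \underline{\mathcal{O}}_+(\epsilon\|U_1\|_{\mathfrak{E}_\omega})$, the constant part contributes the asserted leading term $\pm\epsilon^2(\gamma - 2c_*|\omega|)U_1^+$, whereas every remaining piece is a fixed Fourier multiplier applied to $U_1^+$ that carries extra powers of $\epsilon$ (from the scalar $c_*^2 - c^2$, from $|D|-|\omega|$ acting on $\mathcal{X}_1$, and from the $\underline{\mathcal{O}}_+$-error in \eqref{est:modDu1Plus}); collecting these gives the remainder $\underline{\mathcal{O}}_+(\epsilon^4\|U_1\|_{\mathfrak{E}_\omega})$, with the required derivative estimates immediate since each map is linear in $U_1$. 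This is the first equality in \eqref{ApproximateOne}.

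To get the second equality it remains only to identify the scalar $\pm(\gamma - 2c_*|\omega|)$ with $\sqrt{\gamma^2 - 4\omega(g+\sigma\omega^2)}$. By construction $\omega$ is the abscissa of the critical point of the branch of \eqref{Defcpm} through $c_*$, namely $c_1 = c^+(\omega_1)$ and $c_2 = c^-(\omega_2)$; evaluating \eqref{Defcpm} at $k = \omega$ yields $2\omega c_* = -\gamma \pm \sqrt{\gamma^2 - 4\omega(g+\sigma\omega^2)}$, with the $+$ branch for $c_* = c_1$ and the $-$ branch for $c_* = c_2$, and the radicand is positive because $\omega < 0$ and $g + \sigma\omega^2 > 0$. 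Since $|\omega| = -\omega$, this rearranges to $\gamma - 2c_*|\omega| = \pm\sqrt{\gamma^2 - 4\omega(g+\sigma\omega^2)}$ with the same branch sign; as the sign in front of $\epsilon^2$ is also $+$ for $c_* = c_1$ and $-$ for $c_* = c_2$, the two sign choices cancel and the product is the positive root in both cases. (Alternatively one may square $\gamma - 2c_*|\omega|$ and substitute the relations $c_*^2 = 2\sigma|\omega|$ and $c_*^4 = 4\sigma(g+c_*\gamma)$ that characterize the double root in \eqref{RangeOfC}.)

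The computation is short, so there is no substantial analytic obstacle; the one point needing genuine care is tracking the \emph{two} sign conventions at once — the direction in which $c$ is perturbed off $c_*$, and the active branch of the square root in \eqref{Defcpm} at $\omega$. These are forced to be compatible by the geometry of Figure~\ref{f:Cplusminus}: $c_1$ is the maximum of the negative branch $c^+$ (so $c$ is increased past it) and $c_2$ is the minimum of the positive branch $c^-$ (so $c$ is decreased past it), and in either case the coefficient comes out as the positive root $\sqrt{\gamma^2 - 4\omega(g+\sigma\omega^2)}$ — precisely the positivity that makes the mass term in the limiting stationary NLS \eqref{CubicShrodingerEqn} well-signed.
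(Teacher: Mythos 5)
Your proposal is correct and follows essentially the same route as the paper: expand $c^2-c_*^2=\pm 2\epsilon^2 c_*+\epsilon^4$, replace $|D|U_1^+$ by $|\omega|U_1^+$ via identity \eqref{est:modDu1Plus}, and then identify $\pm(\gamma-2c_*|\omega|)$ with the positive square root by evaluating \eqref{Defcpm} at $k=\omega$. Your explicit tracking of the two sign conventions (direction of the perturbation off $c_*$ versus the branch of $c^{\pm}$) fills in a step the paper leaves implicit, but it is the same argument.
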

\begin{proof}
Recall that $c-c_* = \pm \epsilon^2$, so that
\begin{equation*}
c^2-c^2_* = (c_* \pm \epsilon^2)^2 -c^2_* =\pm 2\epsilon^2 c_* + \epsilon^4.
\end{equation*}
 Then the first equality of \eqref{ApproximateOne} follows directly from $(1)$ of Lemma \ref{t:ApproximateIdentity}.
By \eqref{Defcpm}, 
 \begin{equation*}
    c_{*}(\omega) = \dfrac{-\gamma \pm \sqrt{\gamma^2 -4\omega(g+\sigma \omega^2)}}{2\omega}. 
\end{equation*}
Since $\omega<0$,
\begin{equation*}
 \gamma -2c_{*}|\omega| = \pm \sqrt{\gamma^2-4\omega(g+\sigma \omega^2)},  
\end{equation*}
which gives the second equality of \eqref{ApproximateOne}.
\end{proof}

Using identities \eqref{est:modDu1PlusSquared} to \eqref{est:modDofModDproductMinus} in Lemma \ref{t:ApproximateIdentity}, we immediately get that $F(U_1)$ is  perturbative, since the leading terms $(1-\chi(D))L^{-1}(D)U_1^{+}U_1^{-}$,  $(1-\chi(D))L^{-1}(D)(U_1^{+})^2$, and $(1-\chi(D))L^{-1}(D)(U_1^{-})^2$ are all zero by definition.
As a consequence, combining the estimate in Proposition \ref{t:UTwoBound} for $U_3(U_1)$, we conclude that

\begin{lemma} \label{t:UTwoImplicitUOne}
For $U_1 \in X_1$, $U_2$   satisfies the estimates
\begin{equation}
\|U_2(U_1)\|_{H^2} \lesssim \epsilon\|U_1\|^2_{\mathfrak{E}_k}, \quad  \|dU_2(U_1)\|_{\mathcal{L}(\mathcal{X}_1, \mathcal{X}_2)} \lesssim \epsilon\|U_1\|_{\mathfrak{E}_k}. \label{UTwoEpsilonBound}
\end{equation}
\end{lemma}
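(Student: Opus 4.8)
The plan is to use the splitting $U_2 = F(U_1) + U_3$ coming from \eqref{FUFormula}--\eqref{UTwoImplicit}, estimate the two pieces separately, and then differentiate.

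The piece $U_3$ needs no new work: Proposition \ref{t:UTwoBound} already provides $\|U_3(U_1)\|_{H^2}\lesssim\epsilon\|U_1\|_{\mathfrak{E}_\omega}^2$ and $\|dU_3(U_1)\|_{\mathcal{L}(\mathcal{X}_1,\mathcal{X}_2)}\lesssim\epsilon\|U_1\|_{\mathfrak{E}_\omega}$, which is exactly the shape of \eqref{UTwoEpsilonBound}. So the substance is to upgrade the crude bound $\|F(U_1)\|_{H^2}\lesssim\epsilon^{1/2}\|U_1\|_{\mathfrak{E}_\omega}^2$ of \eqref{FUOneEstimate} to one carrying a full power of $\epsilon$.

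To do this I would write $U_1=U_1^++U_1^-$ and expand each quadratic term in the bracket of \eqref{FUFormula}: the diagonal products produce pieces supported near $\pm 2|\omega|$ and the cross products produce pieces supported near $0$, so in every case the output is supported away from the zero set $\{\pm\omega\}$ of the symbol $\ell$. Feeding these products into the approximate identities \eqref{est:modDu1PlusSquared}--\eqref{est:modDofModDproductMinus} of Lemma \ref{t:ApproximateIdentity} rewrites the whole bracket as a fixed linear combination of $(U_1^\pm)^2$ and $U_1^+U_1^-$ plus an $\underline{\mathcal{O}}(\epsilon^{3/2}\|U_1\|_{\mathfrak{E}_\omega}^2)$ remainder. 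Since $f\mapsto\mathcal{F}^{-1}[(1-\chi(\xi))\ell(\xi)^{-1}\hat f(\xi)]$ is bounded from $L^2$ to $\mathcal{X}_2$, the remainder contributes $\lesssim\epsilon^{3/2}\|U_1\|_{\mathfrak{E}_\omega}^2\lesssim\epsilon\|U_1\|_{\mathfrak{E}_\omega}^2$, while the leading terms $(1-\chi(D))L(D)^{-1}(U_1^\pm)^2$ and $(1-\chi(D))L(D)^{-1}(U_1^+U_1^-)$ drop out for the reason recorded just before the statement; this yields $\|F(U_1)\|_{H^2}\lesssim\epsilon\|U_1\|_{\mathfrak{E}_\omega}^2$. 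For the derivative estimate I would differentiate $F$ as in the proof of \eqref{FUOneEstimate}, obtaining $dF[U_1](V)=-(1-\chi(D))L(D)^{-1}$ of a bracket bilinear in $U_1$ and $V$; since $V\in\mathcal{X}_1$ is frequency-localized near $\pm|\omega|$ exactly as $U_1$ is, the same products land near $\pm 2|\omega|$ and $0$ and the same reduction via Lemma \ref{t:ApproximateIdentity} gives $\|dF[U_1]\|_{\mathcal{L}(\mathcal{X}_1,\mathcal{X}_2)}\lesssim\epsilon\|U_1\|_{\mathfrak{E}_\omega}$. Adding the two pieces gives \eqref{UTwoEpsilonBound}.

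The one place where the argument is more than bookkeeping is the gain of the extra $\epsilon^{1/2}$: one has to verify that the remainders output by Lemma \ref{t:ApproximateIdentity} are genuinely $O(\epsilon^{3/2}\|U_1\|_{\mathfrak{E}_\omega}^2)$ rather than $O(\epsilon^{1/2}\|U_1\|_{\mathfrak{E}_\omega}^2)$, and that the leading $(1-\chi(D))L(D)^{-1}$-images of $(U_1^\pm)^2$ and $U_1^+U_1^-$ really disappear — this is the step that rests on the support of $\chi$ keeping the frequencies $0$ and $\pm 2|\omega|$ away from $\{\pm\omega\}$, which holds once $\delta$ is small compared with $|\omega|$. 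This is where I would expect to have to be careful, since the naive estimate of $F(U_1)$ only sees the $\epsilon^{1/2}$ loss, and it is precisely the frequency structure of $U_1$ encoded in Lemma \ref{t:ApproximateIdentity} that removes it.
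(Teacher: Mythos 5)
Your route---splitting $U_2=F(U_1)+U_3$, quoting Proposition \ref{t:UTwoBound} for the $U_3$ piece, and trying to upgrade \eqref{FUOneEstimate} for $F(U_1)$ by feeding the frequency localization of $U_1$ through Lemma \ref{t:ApproximateIdentity}---is exactly the paper's route: its entire proof is the sentence asserting that the leading terms $(1-\chi(D))L(D)^{-1}(U_1^{\pm})^2$ and $(1-\chi(D))L(D)^{-1}(U_1^{+}U_1^{-})$ ``are all zero by definition,'' followed by the citation of Proposition \ref{t:UTwoBound}. So you have identified the intended argument correctly, and you have also correctly isolated the one step that carries all the weight.

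Unfortunately the justification you give for that step is backwards, and the step does not hold as you (or the paper) state it. The products $(U_1^{+})^2$, $U_1^{+}U_1^{-}$, $(U_1^{-})^2$ have Fourier support near $2|\omega|$, $0$, $-2|\omega|$, which for small $\delta$ is indeed disjoint from $\operatorname{supp}\chi=(-|\omega|-\delta,-|\omega|+\delta)\cup(|\omega|-\delta,|\omega|+\delta)$; but disjointness from $\operatorname{supp}\chi$ means $1-\chi(\xi)\equiv 1$ on those frequencies, so $(1-\chi(D))$ acts on these products as the \emph{identity}, not as zero. (What $(1-\chi(D))$ does annihilate are the $\underline{\mathcal O}_{+}$ remainders of Lemma \ref{t:ApproximateIdentity}, whose Fourier supports lie where $\chi=1$.) Since $\ell(\xi)$ is bounded away from zero near $0$ and $\pm 2|\omega|$, the surviving terms are genuinely of size $\|U_1^{+}\|_{L^\infty}\|U_1^{+}\|_{L^2}\sim\epsilon^{\frac12}\|U_1\|^2_{\mathfrak E_\omega}$ in $H^2$ for extremal $U_1$ (for instance, the $U_1^{+}U_1^{-}$ component of the bracket in \eqref{FUFormula} equals $(\gamma^2+2g|\omega|)U_1^{+}U_1^{-}$ up to $\mathcal O(\epsilon^{\frac32}\|U_1\|_{\mathfrak E_\omega}^2)$ errors, and the prefactor does not vanish). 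Hence this argument only recovers the $\epsilon^{\frac12}\|U_1\|^2_{\mathfrak E_\omega}$ bound of \eqref{FUOneEstimate} and does not produce the extra half power of $\epsilon$ claimed in \eqref{UTwoEpsilonBound}. The gap is inherited from the paper's own one-line assertion rather than introduced by you, but your added explanation makes the problem visible instead of closing it; to salvage the lemma one would need either an actual cancellation of the coefficients of $(U_1^{\pm})^2$ and $U_1^{+}U_1^{-}$ in \eqref{FUFormula} or a reformulation that tracks the quadratic component of $U_2$ explicitly.
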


We now compute the leading term of the right-hand side of \eqref{UOneReduced} using Lemma \ref{t:ApproximateIdentity}.
For the quadratic terms, they are indeed perturbative after applying the frequency projection $\chi^+(D)$.
\begin{lemma}
Let $U_1\in X_1$, $U_2\in X_2$ and $U= U_1 + U_2$, then 
\begin{equation}
 \chi^{+}(D)\left( \frac{\sigma}{2}|D|(|D|U)^2 - \frac{\gamma^2}{2}U^2
   -gU|D|U -\frac{1}{2}g|D|U^2 \right) = \underline{\mathcal{O}}_{+}(\epsilon^{\frac{3}{2}} \| U_1\|^3_{\mathfrak{E}_\omega}).   
 \label{ApproximateTwo}
\end{equation}
\end{lemma}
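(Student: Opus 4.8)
The plan is to substitute $U = U_1 + U_2(U_1)$ into the bracketed expression on the left of \eqref{ApproximateTwo} and split each of the four quadratic terms into the piece that is quadratic in $U_1$, the piece bilinear in $U_1$ and $U_2$, and the piece quadratic in $U_2$, then treat the three types of contributions separately. The decisive observation — the same mechanism used just above to see that $F(U_1)$ is perturbative — is that the purely $U_1$-quadratic part is annihilated by $\chi^{+}(D)$. Indeed, write $U_1 = U_1^+ + U_1^-$ with $\hat U_1^{\pm}$ supported in $(\pm|\omega|-\delta,\pm|\omega|+\delta)$. Then every product $U_1^{s}U_1^{s'}$ with $s,s'\in\{+,-\}$ has Fourier support in one of $(2|\omega|-2\delta,2|\omega|+2\delta)$, $(-2|\omega|-2\delta,-2|\omega|+2\delta)$, or $(-2\delta,2\delta)$, none of which meets $(|\omega|-\delta,|\omega|+\delta)$ once $\delta$ is small (say $\delta<|\omega|/3$). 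Since each of $|D|$, $f\mapsto|D|(|D|f)^2$, and $f\mapsto|D|f^2$ acts through Fourier multipliers that do not enlarge these supports, $\chi^{+}(D)$ applied to $\frac{\sigma}{2}|D|(|D|U_1)^2-\frac{\gamma^2}{2}U_1^2-gU_1|D|U_1-\frac{g}{2}|D|U_1^2$ vanishes identically, without even invoking the approximate identities of Lemma \ref{t:ApproximateIdentity}.

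It then suffices to place the mixed and pure-$U_2$ contributions in the symbol class $\underline{\mathcal{O}}_{+}(\epsilon^{\frac{3}{2}}\|U_1\|^3_{\mathfrak{E}_\omega})$, that is, to bound their $L^2$ norm by $\epsilon^{\frac{3}{2}}\|U_1\|^3_{\mathfrak{E}_\omega}$ and the $\mathcal{L}(\mathcal{X}_1,L^2)$ norm of their derivative by $\epsilon^{\frac{3}{2}}\|U_1\|^2_{\mathfrak{E}_\omega}$. For a representative bilinear term such as $U_1|D|U_2$, $U_2|D|U_1$, $|D|(U_1U_2)$, or $|D|(|D|U_1\,|D|U_2)$, the standard Moser-type product estimates give an $L^2$ bound of the form $\|U_1\|_{W^{2,\infty}}\|U_2\|_{H^2}$; inserting $\|U_1\|_{W^{2,\infty}}\lesssim\epsilon^{\frac{1}{2}}\|U_1\|_{\mathfrak{E}_\omega}$ from \eqref{LOneBound} and $\|U_2(U_1)\|_{H^2}\lesssim\epsilon\|U_1\|^2_{\mathfrak{E}_\omega}$ from Lemma \ref{t:UTwoImplicitUOne} yields exactly $\epsilon^{\frac{3}{2}}\|U_1\|^3_{\mathfrak{E}_\omega}$. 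The pure-$U_2$ terms are bounded by $\|U_2\|_{W^{2,\infty}}\|U_2\|_{H^2}\lesssim\|U_2\|_{H^2}^2\lesssim\epsilon^2\|U_1\|^4_{\mathfrak{E}_\omega}$, which is $\lesssim\epsilon^{\frac{5}{2}}\|U_1\|^3_{\mathfrak{E}_\omega}$ after using $\|U_1\|_{\mathfrak{E}_\omega}\lesssim\epsilon^{\frac{1}{2}}$, hence is even smaller than needed.

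For the derivative estimate one differentiates in a direction $V\in\mathcal{X}_1$ via the chain rule, so a term like $U_1|D|U_2(U_1)$ produces $V|D|U_2(U_1)+U_1|D|\bigl(dU_2[U_1]V\bigr)$; both summands are controlled in $L^2$ by the same product estimates, now using in addition $\|dU_2[U_1]\|_{\mathcal{L}(\mathcal{X}_1,\mathcal{X}_2)}\lesssim\epsilon\|U_1\|_{\mathfrak{E}_\omega}$ from Lemma \ref{t:UTwoImplicitUOne}, which gives a bound of order $\epsilon^{\frac{3}{2}}\|U_1\|^2_{\mathfrak{E}_\omega}\|V\|_{\mathfrak{E}_\omega}$, while the pure-$U_2$ contributions are again better by a factor of $\epsilon^{\frac{1}{2}}$. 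Applying $\chi^{+}(D)$, which is bounded on $L^2$ and lands in its own range, then puts the entire remainder in $\underline{\mathcal{O}}_{+}(\epsilon^{\frac{3}{2}}\|U_1\|^3_{\mathfrak{E}_\omega})$, as claimed. I expect no genuine obstacle here: the one substantive ingredient is the spectral-gap argument eliminating the leading quadratic-in-$U_1$ term, and the rest is bookkeeping with the product inequalities and the a priori bounds on $U_2$.
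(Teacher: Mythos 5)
Your proposal is correct and follows essentially the same route as the paper: the mixed $U_1$--$U_2$ and pure $U_2$ contributions are estimated using the product bounds together with $\|U_2(U_1)\|_{H^2}\lesssim\epsilon\|U_1\|^2_{\mathfrak{E}_\omega}$ and $\|dU_2\|\lesssim\epsilon\|U_1\|_{\mathfrak{E}_\omega}$, while the purely quadratic-in-$U_1$ part vanishes identically under $\chi^+(D)$ because the Fourier supports of $U_1^{\pm}U_1^{\pm}$ are disjoint from $(|\omega|-\delta,|\omega|+\delta)$. No substantive difference from the paper's argument.
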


\begin{proof}
Each of these terms on the left of \eqref{ApproximateTwo} is of $\chi^+(D)\mathcal{Q}(U,U)$ type, where $\mathcal{Q}(U,U)$ is a bilinear term in $U$ with at most three derivatives.
When one of the arguments in $\mathcal{Q}(U,U)$ is $U_2$, then using the bound \eqref{UTwoEpsilonBound},
\begin{align*}
&\|\mathcal{Q}(U_1,U_2)\|_{H^2} + \|\mathcal{Q}(U_2, U_2)\|_{H^2}\lesssim \epsilon^{\frac{3}{2}}\|U_1\|^3_{\mathfrak{E}_k}, \\
&\|d\mathcal{Q}(U_1,U_2(U_1))\|_{\mathcal{L}(\mathcal{X}_1, L^2)} + \|d\mathcal{Q}(U_2(U_1), U_2(U_1))\|_{\mathcal{L}(\mathcal{X}_1, L^2)}\lesssim \epsilon^{\frac{3}{2}}\|U_1\|^2_{\mathfrak{E}_k}.
\end{align*}
These terms are hence perturbative.

It suffices to estimate $\chi^+(D)\mathcal{Q}(U_1, U_1)$.
Since the Fourier transforms of $\mathcal{Q}(U^+_1, U^+_1)$, $\mathcal{Q}(U^+_1, U^-_1)$, $\mathcal{Q}(U^-_1, U^-_1)$ are supported on $(2|\omega|-2\delta, 2|\omega|+2\delta)$, $(-2\delta, +2\delta)$, $(-2|\omega|-2\delta, -2|\omega|+2\delta)$ respectively, these frequency supports are disjoint from the set $(|\omega|-\delta, |\omega|+\delta)$.
Hence, $\chi^+(D)m(U_1, U_1) = 0$, and this finishes the proof of the lemma. 
\end{proof}

For the cubic terms, we have the following formula for the leading term on the right-hand side of~\eqref{UOneReduced}:
\begin{lemma}
Let $U_1\in X_1$, $U_2\in X_2$, and $U = U_1 + U_2$, then
\begin{equation}
\begin{aligned}
&\chi^+(D)\left(- \frac{\sigma}{2}\partial_\alpha (U_\alpha)^3 -  \frac{\gamma^2}{6}|D|U^3+\frac{\gamma^2}{2}U|D|U^2 - \frac{\gamma^2}{2}U^2|D|U \right) \\
=&   \left(\frac{3\sigma}{2} |\omega|^3-\gamma^2\right)|\omega|\chi^+(D)\left(|U_1^+|^2U_1^+\right)  + \underline{\mathcal{O}}_{+}(\epsilon^{\frac{3}{2}} \|U_1\|^3_{\mathfrak{E}_\omega}).   
\end{aligned}  \label{ApproximateThree}
\end{equation}
\end{lemma}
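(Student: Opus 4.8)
The plan is to reduce the four cubic terms on the left of~\eqref{ApproximateThree} to the single resonant monomial $(U_1^+)^2U_1^- = |U_1^+|^2U_1^+$, with all discarded pieces absorbed into $\underline{\mathcal{O}}_+(\epsilon^{3/2}\|U_1\|^3_{\mathfrak{E}_\omega})$. I would first remove the $U_2$ contributions. Writing $U = U_1 + U_2(U_1)$ and expanding each of the four cubic terms multilinearly, consider a monomial carrying at least one factor of $U_2$. In these terms no single factor carries more than two derivatives (for $-\tfrac{\sigma}{2}\partial_\alpha(U_\alpha)^3$ this is seen after distributing the outer $\partial_\alpha$; the $\gamma^2$-terms carry at most one derivative in total), so at most two derivatives land on $U_2$, which $H^2$ controls. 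Using $\|U_2(U_1)\|_{H^2}\lesssim\epsilon\|U_1\|^2_{\mathfrak{E}_\omega}$ and $\|dU_2[U_1]\|\lesssim\epsilon\|U_1\|_{\mathfrak{E}_\omega}$ from Lemma~\ref{t:UTwoImplicitUOne}, the Sobolev embedding, and $\|U_1\|_{W^{1,\infty}}\lesssim\epsilon^{1/2}\|U_1\|_{\mathfrak{E}_\omega}$ from~\eqref{LOneBound}, every such monomial is bounded in $L^2$ by $\epsilon^2\|U_1\|^4_{\mathfrak{E}_\omega}$, hence by $\epsilon^{3/2}\|U_1\|^3_{\mathfrak{E}_\omega}$ since $\|U_1\|_{\mathfrak{E}_\omega}\lesssim\epsilon^{1/2}$, with the matching Fréchet-derivative bound following from the product and chain rules. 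So it suffices to treat the four cubic terms with $U$ replaced by $U_1$.

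Next I would write $U_1 = U_1^+ + U_1^-$, expand each cubic term into monomials in $U_1^\pm$, and discard the non-resonant ones. A product of three factors from $\{U_1^+, U_1^-\}$ has Fourier support within $3\delta$ of $\pm3|\omega|$ or $\pm|\omega|$ according to the chosen signs, and the outer multiplier $\partial_\alpha$ or $|D|$ preserves this support; with $\delta<|\omega|/2$, only the monomials with two $+$'s and one $-$ meet $(|\omega|-\delta,|\omega|+\delta)$, so $\chi^+(D)$ kills the rest. For the survivors I invoke Lemma~\ref{t:ApproximateIdentity}: the resonant part of $-\tfrac{\sigma}{2}\partial_\alpha(U_\alpha)^3$ is $-\tfrac{3\sigma}{2}\partial_\alpha\big((U_{1,\alpha}^+)^2U_{1,\alpha}^-\big) = \tfrac{3\sigma}{2}|\omega|^4(U_1^+)^2U_1^- + \underline{\mathcal{O}}_+(\epsilon^2\|U_1\|^3_{\mathfrak{E}_\omega})$ by~\eqref{est:XIII}; the resonant part of $-\tfrac{\gamma^2}{6}|D|U^3$ is $-\tfrac{\gamma^2}{2}|D|\big((U_1^+)^2U_1^-\big)$, evaluated by~\eqref{est:modDCubic}; the resonant part of $\tfrac{\gamma^2}{2}U|D|U^2$ is $\tfrac{\gamma^2}{2}U_1^-|D|(U_1^+)^2 + \gamma^2U_1^+|D|(U_1^+U_1^-)$, evaluated by~\eqref{est:eight} and~\eqref{est:XI} (the second being purely perturbative); and the resonant part of $-\tfrac{\gamma^2}{2}U^2|D|U$ is $-\tfrac{\gamma^2}{2}(U_1^+)^2|D|U_1^- - \gamma^2U_1^+U_1^-|D|U_1^+$, evaluated by~\eqref{est:ten} and~\eqref{est:night}. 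Adding the leading terms yields $\big(\tfrac{3\sigma}{2}|\omega|^4 + (-\tfrac12+1-\tfrac32)\gamma^2|\omega|\big)(U_1^+)^2U_1^- = \big(\tfrac{3\sigma}{2}|\omega|^3-\gamma^2\big)|\omega|(U_1^+)^2U_1^-$; applying $\chi^+(D)$ (the remainders from Lemma~\ref{t:ApproximateIdentity} already lie in $\chi^+(D)L^2$), absorbing the $\epsilon^2$ errors into $\underline{\mathcal{O}}_+(\epsilon^{3/2}\|U_1\|^3_{\mathfrak{E}_\omega})$, and using $(U_1^+)^2U_1^- = |U_1^+|^2U_1^+$ gives~\eqref{ApproximateThree}.

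The argument is essentially bookkeeping once Lemma~\ref{t:ApproximateIdentity} is in hand; the two points requiring care are (i) verifying that every monomial containing a $U_2$ factor genuinely lies in $\underline{\mathcal{O}}_+(\epsilon^{3/2}\|U_1\|^3_{\mathfrak{E}_\omega})$ — in particular controlling how many derivatives can fall on $U_2$ and checking the Fréchet-derivative bound in the definition of $\underline{\mathcal{O}}_+$, not only the $L^2$ bound — and (ii) getting the numerical coefficients right, since the three $\gamma^2$ contributions partially cancel and only $-\gamma^2|\omega|$ survives. I do not anticipate a genuinely hard analytic obstacle.
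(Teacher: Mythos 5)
Your proposal is correct and follows essentially the same route as the paper: discard all monomials containing a $U_2$ factor via the bounds of Lemma \ref{t:UTwoImplicitUOne}, use the frequency-support argument to keep only the $(U_1^+)^2U_1^-$ resonances, and evaluate those with identities \eqref{est:modDCubic}--\eqref{est:XIII} of Lemma \ref{t:ApproximateIdentity}; your coefficient bookkeeping $\bigl(-\tfrac12+1-\tfrac32\bigr)\gamma^2|\omega|=-\gamma^2|\omega|$ matches the paper's term-by-term computation.
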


\begin{proof}
Each of these terms on the left of \eqref{ApproximateThree} is of $\chi^+(D)\mathcal{C}(U,U,U)$ type, where $\mathcal{C}(U,U,U)$ is a trilinear term in $U$ with at most four derivatives.
When at least one of the argument in $\mathcal{C}(U,U,U)$ is $U_2$, then using the bound \eqref{UTwoEpsilonBound},
\begin{align*}
&\|\mathcal{C}(U_1, U_1, U_2)\|_{H^2} + \|\mathcal{C}(U_1, U_2, U_2)\|_{H^2} +   \|\mathcal{C}(U_2, U_2, U_2)\|_{H^2}\lesssim \epsilon^{2}\|U_1\|^4_{\mathfrak{E}_k}, \\
&\|d\mathcal{C}(U_1, U_1, U_2)\|_{\mathcal{L}(\mathcal{X}_1, L^2)} + \|d\mathcal{C}(U_1, U_2, U_2)\|_{\mathcal{L}(\mathcal{X}_1, L^2)} + \|d\mathcal{C}(U_2, U_2, U_2)\|_{\mathcal{L}(\mathcal{X}_1, L^2)}\lesssim \epsilon^{2}\|U_1\|^3_{\mathfrak{E}_k}.
\end{align*}
These terms are thus perturbative.
It suffices to estimate $\chi^+(D)\mathcal{C}(U_1, U_1, U_1)$.

Recall that $U_1=U_1^++U_1^-$ with $U_1^-=\overline{U_1^+}$ and noting that a product of three $U_1^{\pm}$ lies in the frequency support of $\chi^+(D)$ only when there are exactly two $U_1^+$ factors and one $U_1^-$ factor.
Hence using \eqref{est:modDCubic} to \eqref{est:XI} in Lemma \ref{t:ApproximateIdentity}, one can write
\begin{align*}
\chi^+(D)\left(-\frac{\sigma}{2}\partial_\alpha(U_{1,\alpha})^3\right)&=- \chi^+(D)\left(\frac{3\sigma}{2}\partial_\alpha((U_{1,\alpha}^{+})^2U_{1,\alpha}^{-})\right) \\
=& \frac{3\sigma}{2}|\omega|^4\chi^{+}(D)(|U^+_1|^2U^+_1) + \underline{\mathcal{O}}_{+}(\epsilon^{\frac{3}{2}} \|U_1\|^3_{\mathfrak{E}_\omega}), \\
\chi^+(D)\left(\frac{\gamma^2}{2}U_1|D|(U_1)^2\right) & = \gamma^2\chi^+(D)\left(U_1^+|D|(U_1^+ U_1^-)\right) + \frac{\gamma^2}{2}\chi^+(D)\left(U_1^-|D|(U_1^+)^2\right) \\ 
=& \gamma^2|\omega|\chi^+(D)\left(|U_1^+|^2U_1^+\right)+\underline{\mathcal O}_+\left(\epsilon^{\frac 3 2}\|U_1\|_{\mathfrak{E}_\omega}^3\right),\\
\chi^+(D)\left(-\frac{\gamma^2}{2}U_1^2|D|U_1\right)
&=-\frac{3\gamma^2}{2}|\omega| \chi^+(D)\left(|U_1^+|^2U_1^+\right)+\underline{\mathcal O}_+\left(\epsilon^{\frac{3}{2}}\|U_1\|^3_{\mathfrak{E}_\omega}\right),\\
\chi^+(D)\left(-\frac{\gamma^2}{6}|D|U_1^3\right) & =  -\frac{\gamma^2}{2}\chi^+(D)\left(|D|(U_1^+ U_1^+ U_1^-)\right)\\
=&-\frac{\gamma^2}{2}|\omega| \chi^+(D)\left(|U_1^+|^2U_1^+\right) +\underline{\mathcal O}_+\left(\epsilon^{\frac{3}{2}}\|U_1\|^3_{\mathfrak{E}_\omega}\right).  
\end{align*}
\end{proof}
As for the remainder term $\chi^+(D)\tilde{R}(U)$, we use \eqref{RemainderCapillaryTwo} to estimate
\begin{equation}
 \chi^+(D)\tilde{R}(U)   = \underline{\mathcal{O}}_{+}(\epsilon^{2} \|U_1\|^4_{\mathfrak{E}_\omega}). \label{TildeRUBound}
\end{equation}

Substituting equations \eqref{ApproximateOne}, \eqref{ApproximateTwo}, \eqref{ApproximateThree}, and \eqref{TildeRUBound} back to \eqref{UOneReduced}, the frequency-localized Babenko equation is simplified to 
\begin{equation}
\begin{aligned}
&\left(g+ c_{*}\gamma-c_{*}^2|D|+ \sigma |D|^2\right)U_1^+  + \epsilon^2 \sqrt{\gamma^2-4\omega(g+\sigma \omega^2)} U_1^+  \\
=&\left(\frac{3\sigma}{2}|\omega|^3-\gamma^2\right)|\omega|\chi^+(D)\left(|U_1^+|^2U_1^+\right)+  \underline{\mathcal{O}}_{+}(\epsilon^4 \| U_1\|_{\mathfrak{E}_\omega})+\underline{\mathcal{O}}_{+}(\epsilon^{\frac{3}{2}} \| U_1\|^3_{\mathfrak{E}_\omega}).  
\end{aligned} \label{UPlusOneReduced}
\end{equation}

In the following, we show that we can always choose the appropriate $c_{*}$ such that the coefficient $\frac{3\sigma}{2}|\omega|^3-\gamma^2$ is positive.
Since $\omega$ is the only real root of the quadratic equation
\begin{equation*}
    \sigma k^2 +c_*^2 k + (g+c\gamma) =0, \quad k\leq 0,
\end{equation*}
we get $|\omega|= \frac{c_*^2}{2\sigma}$.
Note that $c_*$ satisfies the discriminant condition
\begin{equation*}
    c_*^4 =4\sigma(g+c_*\gamma) \geq 4\sigma c_*\gamma.
\end{equation*}
If we choose $c_{*}$ such that $c_*\gamma>0$, which is always possible, because $c_1$ and $c_2$ have the opposite sign.
Then 
\begin{equation*}
    \frac{3\sigma}{2}|\omega|^3 = \frac{3c_*^6}{16\sigma^2} \geq 3\gamma^2> \gamma^2,
\end{equation*}
so that the coefficient is strictly positive.
The $c_{*}\gamma<0$ case is more complicated and will be analyzed in detail in Section \ref{s:parameterRegimes}.

We also comment that in the pure capillary case $g=0$, $c_*$ can only be $\sqrt[3]{4\sigma\gamma}$, because if $c_* = 0$, then $\omega = 0$ and the leading-order cubic term in \eqref{UOneReduced} vanishes.
When $c_*=\sqrt[3]{4\sigma\gamma}$, one can also compute that $\omega = -\left(2\gamma^2\sigma^{-1}\right)^{\frac{1}{3}}$ in this case.

Finally, for small positive constant $\epsilon$, we rescale $U_1^+$,
\begin{equation*}
U_1^{+}(\alpha) = \frac{1}{2}\epsilon \rho(\epsilon \alpha) e^{-i\omega\alpha},
\end{equation*}
and substitute $U_1^{+}$ into the equation \eqref{UPlusOneReduced}, where the frequency $\omega$ is either $\omega_1$ or $\omega_2$ depending whether the velocity $c_*$ is  $c_1$ or $c_2$.
Here the Fourier transform of $\rho(\epsilon \alpha)$ is supported on $\chi_0(\epsilon \xi)$, where $\chi_0$ is the characteristic function of $(-\delta, \delta)$.

This choice of $U_1^+$ gives $\|U_1^+\|_{\mathfrak{E}_\omega} = \epsilon^{\frac{1}{2}}\|\rho\|_{H^1}$.
It follows that $\rho \in B_R(0) \subseteq \chi_0(\epsilon D)H^2(\mathbb{R})$, where the value $R\leq 2\epsilon^{-\frac{1}{2}}R_1$.
Let $\beta=\epsilon \alpha$, $\rho(\beta)$ solves the equation
\begin{equation}
    \epsilon^{-2}\tilde{L}(|\omega| + \epsilon D ) \rho +  a_1 \rho - a_2 \chi_0(\epsilon D)(|\rho|^2 \rho) + \epsilon^{\frac{1}{2}}\underline{O}_0^\epsilon (\|\rho\|_{H^1}) =0, \label{e:FocusingODE}
\end{equation}
where the symbol $\tilde{\ell}(\xi)$  of $\tilde{L}(D)$ is defined by
\begin{equation}
    \tilde{\ell}(\xi) : = g+ c_{*}\gamma-c_{*}^2 \xi + \sigma \xi^2.  \label{SymbolOfL}
\end{equation} 
$\tilde{\ell}(\xi)$ satisfies the identities
\begin{equation}
    \tilde{\ell}(|\omega|) = 0, \quad \tilde{\ell}^{'}(|\omega|) = 0, \quad \tilde{\ell}^{''}(\xi) =0. \label{EllProperty}
\end{equation}
Positive coefficients $a_1$ and $a_2$ are given by
\begin{equation*}
    a_1 = \sqrt{\gamma^2-4\omega(g+\sigma \omega^2)}, \quad a_2 = \frac{3\sigma}{2}|\omega|^3-\gamma^2.
\end{equation*}

\section{Solving the reduced equation} \label{s:Solving}
In this section, we use the method in Section $5$ of \cite{MR4246394} to solve the reduced equation \eqref{e:FocusingODE} and show that by taking the limit $\epsilon \rightarrow 0$, it converges to a  stationary focusing cubic nonlinear Schr\"odinger equation.

Writing \eqref{e:FocusingODE} as the fixed-point equation
\begin{equation}
 \rho =  \epsilon^2(\tilde{L}(|\omega| + \epsilon D )+ \epsilon^2 a_1)^{-1} \left( a_2 \chi_0(\epsilon D)(|\rho|^2 \rho) + \epsilon^{\frac{1}{2}}\underline{O}_0^\epsilon (\|\rho\|_{H^1}) \right), \label{FixedpointODE}
\end{equation}
we apply the following implicit function theorem for the limit as $\epsilon \rightarrow 0$.

\begin{proposition}[Implicit function theorem II \cite{MR4246394}] \label{t:ImplicitTwo}
Let $\mathcal{X}$ be a Banach space, $X_0$ and $\Lambda_0$ be open neighbourhoods of respectively $x^{*}$ in $\mathcal{X}$ and the origin in $\mathbb{R}$, and $\mathcal{F}: X_0 \times \Lambda_0 \rightarrow \mathcal{X}$  be a function which is differentiable with respect to $x\in X_0$ for each $\lambda \in \Lambda_0$. 
Furthermore, suppose that $\mathcal{F}(x^*, 0) = 0$, $d_1\mathcal{F}[x^*, 0]: \mathcal{X}\rightarrow \mathcal{X}$ is an isomorphism,
\begin{equation*}
 \lim_{x \rightarrow x^*} \|d_1 \mathcal{F}[x,0] - d_1 \mathcal{F}[x^*,0] \|_{\mathcal{L}(\mathcal{X})} =0,
\end{equation*}
and 
\begin{equation*}
\lim_{\lambda \rightarrow 0}\|\mathcal{F}(x, \lambda) - \mathcal{F}(x, 0) \|_{\mathcal{X}} = 0, \quad \lim_{\lambda \rightarrow 0} \|d_1 \mathcal{F}[x, \lambda] - d_1\mathcal{F}[x,0] \|_{\mathcal{L}(\mathcal{X})} =0
\end{equation*}
uniform over $x \in X_0$. 

There exist open neighbourhoods $x^* \in \mathcal{X}$ and $\Lambda$ of $0$ in $\mathbb{R}$ (with $X \subseteq X_0$, $\Lambda \subseteq \Lambda_0$) and a uniquely determined mapping $h: \Lambda \rightarrow X$  with the properties that
\begin{enumerate}
\item $h$  is continuous at the origin (with $h(0) = x^*)$, 
\item $\mathcal{F}(h(\lambda), \lambda) = 0$ for $\lambda \in \Lambda$, 
\item $x = h(\lambda)$  whenever $(x, \lambda) \in X \times \Lambda$ satisfies $\mathcal{F}(x, \lambda) =0$.
\end{enumerate}
\end{proposition}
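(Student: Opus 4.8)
The plan is to freeze the parameter $\lambda$ and solve $\mathcal F(x,\lambda)=0$ by a Newton-type iteration that is a contraction uniformly in $\lambda$. Set $T:=d_1\mathcal F[x^*,0]$, an isomorphism of $\mathcal X$ by hypothesis, and recast the equation as the fixed-point problem $x=\mathcal N(x,\lambda)$, where
\begin{equation*}
\mathcal N(x,\lambda):=x-T^{-1}\mathcal F(x,\lambda).
\end{equation*}
Zeros of $\mathcal F(\cdot,\lambda)$ near $x^*$ are then exactly the fixed points of $\mathcal N(\cdot,\lambda)$, and $\mathcal N$ is differentiable in $x$ with
\begin{equation*}
d_1\mathcal N[x,\lambda]=T^{-1}\bigl(d_1\mathcal F[x^*,0]-d_1\mathcal F[x,0]\bigr)+T^{-1}\bigl(d_1\mathcal F[x,0]-d_1\mathcal F[x,\lambda]\bigr).
\end{equation*}

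First I would prove the contraction estimate. Using the continuity hypothesis $d_1\mathcal F[x,0]\to d_1\mathcal F[x^*,0]$ as $x\to x^*$, choose a radius $\delta>0$ with $\bar B_\delta(x^*)\subseteq X_0$ so that the first term above has operator norm $\le\tfrac14$ on $\bar B_\delta(x^*)$; then, using the uniform-in-$x$ limit $d_1\mathcal F[x,\lambda]\to d_1\mathcal F[x,0]$ as $\lambda\to0$, choose a neighbourhood $\Lambda_1$ of $0$ so that the second term has operator norm $\le\tfrac14$ for $\lambda\in\Lambda_1$, giving $\|d_1\mathcal N[x,\lambda]\|\le\tfrac12$ on $\bar B_\delta(x^*)\times\Lambda_1$. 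By the mean-value inequality on the convex ball, $\mathcal N(\cdot,\lambda)$ is $\tfrac12$-Lipschitz there. For the self-mapping property, $\|\mathcal N(x,\lambda)-x^*\|\le\tfrac12\|x-x^*\|+\|T^{-1}\mathcal F(x^*,\lambda)\|$, and since $\mathcal F(x^*,\lambda)\to\mathcal F(x^*,0)=0$ we may shrink $\Lambda_1$ to $\Lambda$ so that $\|T^{-1}\mathcal F(x^*,\lambda)\|\le\delta/2$; then $\mathcal N(\cdot,\lambda)$ maps $\bar B_\delta(x^*)$ into itself. Banach's fixed-point theorem now produces, for each $\lambda\in\Lambda$, a unique $h(\lambda)\in\bar B_\delta(x^*)$ with $\mathcal F(h(\lambda),\lambda)=0$. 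Setting $X=B_\delta(x^*)$ gives conclusions (2) and (3), local uniqueness being just uniqueness of the fixed point, while $h(0)=x^*$ since $\mathcal F(x^*,0)=0$ makes $x^*$ the only fixed point at $\lambda=0$; finally $\|h(\lambda)-x^*\|\le\tfrac12\|h(\lambda)-x^*\|+\|T^{-1}\mathcal F(x^*,\lambda)\|$ yields $\|h(\lambda)-x^*\|\le 2\|T^{-1}\|\,\|\mathcal F(x^*,\lambda)\|\to0$, i.e. continuity of $h$ at the origin, which is (1).

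The main obstacle is purely a matter of keeping the hypotheses straight: they decouple the $x$- and $\lambda$-dependence, giving continuity of $x\mapsto d_1\mathcal F[x,0]$ only at the single point $x^*$ together with $\lambda\to0$ limits that are uniform in $x$, rather than joint continuity of $d_1\mathcal F$. Consequently the quantifiers must be chosen in the order $\delta$ first (from the $x\to x^*$ statement alone), then $\Lambda$ (from the uniform-in-$x$ statements); interleaving them is not allowed, and this is essentially the only delicate point, the remainder being the standard parametrized contraction-mapping argument. A secondary item to check is that the mean-value inequality genuinely applies, i.e. that $\mathcal N(\cdot,\lambda)$ is differentiable on the convex set $\bar B_\delta(x^*)$ — which holds once $\delta$ is small enough that $\bar B_\delta(x^*)\subseteq X_0$, where $\mathcal F$ is assumed differentiable in $x$.
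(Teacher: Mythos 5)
Your proof is correct. The paper itself gives no proof of this proposition --- it is imported verbatim from \cite{MR4246394} --- and your argument is the standard one for an implicit function theorem with merely continuous (at $\lambda=0$, uniformly in $x$) parameter dependence: recast $\mathcal F(x,\lambda)=0$ as a fixed-point problem for $\mathcal N(x,\lambda)=x-T^{-1}\mathcal F(x,\lambda)$ with $T=d_1\mathcal F[x^*,0]$, and run a parametrized contraction. You correctly identify and handle the one delicate point, namely that $\delta$ must be fixed first using only the $x\to x^*$ continuity of $d_1\mathcal F[\cdot,0]$, and the neighbourhood $\Lambda$ chosen afterwards using the uniform-in-$x$ limits; the rest (self-mapping via $\mathcal F(x^*,\lambda)\to 0$, Banach fixed point, and the a priori bound $\|h(\lambda)-x^*\|\le 2\|T^{-1}\|\,\|\mathcal F(x^*,\lambda)\|$ giving continuity at the origin) is exactly as it should be.
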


To apply the above implicit function theorem to the fixed-point equation \eqref{FixedpointODE}, we first prove the following estimate.
\begin{lemma} \label{t:SymbolApprox}
Let $|\xi| < \frac{\delta}{\epsilon}$, then
\begin{equation*}
 \left|\frac{\epsilon^2}{\tilde{\ell}(|\omega|+ \epsilon \xi)+ \epsilon^2 a_1} - \frac{1}{a_1 + \sigma \xi^2} \right| \lesssim \frac{\epsilon}{(1+\xi^2)^\frac{1}{2}}.
\end{equation*}
\end{lemma}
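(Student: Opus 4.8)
The plan is to subtract the two fractions over a common denominator, reducing the claim to an upper bound on the numerator and a lower bound on the denominator. Writing $A(\xi) := \tilde{\ell}(|\omega|+\epsilon\xi)+\epsilon^2 a_1$, one has
\[
\frac{\epsilon^2}{A(\xi)} - \frac{1}{a_1+\sigma\xi^2} = \frac{\epsilon^2\sigma\xi^2 - \tilde{\ell}(|\omega|+\epsilon\xi)}{A(\xi)\,(a_1+\sigma\xi^2)},
\]
so it suffices to control $\epsilon^2\sigma\xi^2 - \tilde{\ell}(|\omega|+\epsilon\xi)$ from above and $A(\xi)\,(a_1+\sigma\xi^2)$ from below, uniformly for $|\xi|<\delta/\epsilon$.

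For the numerator I would Taylor expand $\tilde{\ell}$ about $|\omega|$. By \eqref{EllProperty}, $\tilde{\ell}(|\omega|) = \tilde{\ell}'(|\omega|) = 0$, and since $\tilde{\ell}$ is the quadratic polynomial in \eqref{SymbolOfL} with leading coefficient $\sigma$, its Taylor expansion terminates: $\tilde{\ell}(\zeta) = \sigma(\zeta-|\omega|)^2$. Hence $\tilde{\ell}(|\omega|+\epsilon\xi) = \sigma\epsilon^2\xi^2$, the numerator vanishes identically, and the left-hand side of the lemma is in fact $0$, which is trivially $\lesssim \epsilon(1+\xi^2)^{-1/2}$. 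If one instead wants an argument robust to $\tilde{\ell}$ not being exactly quadratic, the integral form of Taylor's theorem bounds the numerator by a constant times $\epsilon^3|\xi|^3$, the constant being $\sup|\tilde{\ell}'''|$ on the interval $[\,|\omega|-\delta,\,|\omega|+\delta\,]$ in which $|\omega|+\epsilon\xi$ lies when $|\xi|<\delta/\epsilon$ and $\delta<|\omega|$.

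For the denominator, ellipticity is what is needed. Since $a_1>0$ and, by the above, $\tilde{\ell}(|\omega|+\epsilon\xi)=\sigma\epsilon^2\xi^2\geq 0$, the factor $A(\xi)$ is bounded below by $\epsilon^2(a_1+\sigma\xi^2)\gtrsim \epsilon^2(1+\xi^2)$, while $a_1+\sigma\xi^2\gtrsim 1+\xi^2$ because $\sigma$ and $a_1$ are positive; thus $A(\xi)(a_1+\sigma\xi^2)\gtrsim \epsilon^2(1+\xi^2)^2$. Combining with the numerator bound gives the claim, in the robust version via $|\cdot|\lesssim \epsilon|\xi|^3(1+\xi^2)^{-2}\leq \epsilon(1+\xi^2)^{-1/2}$.

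I do not anticipate a genuine obstacle. The only mildly delicate points are: recognizing from \eqref{EllProperty} that $\tilde{\ell}$ has a \emph{double} root at $|\omega|$ with leading coefficient $\sigma$ (which is precisely what makes the reduced symbol agree with $(a_1+\sigma\xi^2)^{-1}$ with no error), and verifying that the elliptic lower bound on $A(\xi)$ holds uniformly over $|\xi|<\delta/\epsilon$, where the restriction $\delta<|\omega|$ keeps $|\omega|+\epsilon\xi$ bounded away from the origin at which $|D|$ fails to be smooth.
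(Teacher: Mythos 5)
Your proof is correct and follows essentially the same route as the paper: subtract over a common denominator, Taylor expand $\tilde{\ell}$ at $|\omega|$ using \eqref{EllProperty}, and use the elliptic lower bound $\tilde{\ell}(|\omega|+\epsilon\xi)+\epsilon^2 a_1=\epsilon^2(a_1+\sigma\xi^2)\gtrsim\epsilon^2(1+\xi^2)$. Your observation that the numerator vanishes identically (since $\tilde{\ell}(\zeta)=\sigma(\zeta-|\omega|)^2$ exactly) is in fact sharper than the paper's stated $\lesssim\epsilon^3|\xi|^3$ remainder bound, which coincides with the ``robust'' version you also record.
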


\begin{proof}
Using direct computation,
\begin{equation*}
 \left|\frac{\epsilon^2}{\tilde{\ell}(|\omega|+ \epsilon \xi)+ \epsilon^2 a_1} - \frac{1}{a_1 + \sigma \xi^2} \right| = \left| \frac{\epsilon^2 \xi^2 \sigma - \tilde{\ell}(|\omega|+ \epsilon \xi)}{(\tilde{\ell}(|\omega|+ \epsilon \xi)+ \epsilon^2 a_1)(a_1 + \sigma \xi^2)} \right|.    
\end{equation*}
By \eqref{EllProperty}, it follows that for the numerator of the right-hand side,
\begin{equation*}
|\epsilon^2 \xi^2 \sigma - \tilde{\ell}(|\omega|+ \epsilon \xi)| \lesssim \epsilon^3 |\xi|^3, \quad |s|\lesssim \delta.
\end{equation*}
On the other hand, for any $\xi\in \mathbb{R}$, $\tilde{\ell}(|\omega|+\epsilon \xi)\gtrsim \xi^2$.
Hence,
\begin{equation*}
 \left|\frac{\epsilon^2}{\tilde{\ell}(|\omega|+ \epsilon \xi)+ \epsilon^2 a_1} - \frac{1}{a_1 + \sigma \xi^2} \right| \lesssim \frac{\epsilon |\xi|^3}{(1+ \xi^2)^2} \lesssim \frac{\epsilon}{(1+\xi^2)^\frac{1}{2}}, \quad |\xi|< \frac{\delta}{\epsilon}.   
\end{equation*}
\end{proof}

Then we recall that it was proved in Section $5.1$ of \cite{MR4246394} that the operator $\chi_0(\epsilon D)$ converges to the identity operator $I$ in the sense that
\begin{equation}
 \lim_{\epsilon \rightarrow 0} \|\chi_0(\epsilon D)- I \|_{\mathcal{L}(H^1(\mathbb{R}), H^{\frac{3}{4}}(\mathbb{R}))} = 0. \label{ChiDLimit}   
\end{equation}
    
Using the approximation result in Lemma \ref{t:SymbolApprox}, one can replace the operator $\epsilon^2(\tilde{L}(|\omega| + \epsilon D )+ \epsilon^2 a_1)^{-1}$ by $(a_1 - \sigma \partial_\beta^2 )^{-1}$ plus some perturbative error terms.
As a consequence, \eqref{FixedpointODE} can be rewritten as
\begin{equation*}
     \rho + H_\epsilon(\rho) =0, 
\end{equation*}
where
\begin{equation*}
H_\epsilon(\rho) = -a_2 (a_1 - \sigma \partial_\beta^2 )^{-1} \chi_0(\epsilon D) \left(|\chi_0(\epsilon D) \rho|^2 \chi_0(\epsilon D) \rho\right) + \epsilon^{\frac{1}{2}} \underline{O}_1^\epsilon (\|\rho\|_{H^1}).
\end{equation*}
Let $X = B_R(0) \subseteq \chi_0(\epsilon D)H^2(\mathbb{R})$, $\Lambda_0 = (-\epsilon_0, \epsilon_0)$  for  sufficiently small positive constant $\epsilon_0$,
\begin{equation*}
\mathcal{X} = H^1_e(\mathbb{R}, \mathbb{C}) = \{\rho \in H^1(\mathbb{R}) : \rho(\beta) = \overline{\rho(-\beta)} \text{ for all } \beta\in \mathbb{R}\},  
\end{equation*}
and $\mathcal{F}(\rho, \epsilon) := \rho + H_{|\epsilon|}(\rho)$.
Then
\begin{align*}
 \mathcal{F}(\rho, 0) - \mathcal{F}(\rho, \epsilon) =&   a_2 (a_1 - \sigma \partial_\beta^2 )^{-1} [\chi_0(|\epsilon| D) \left(|\chi_0(|\epsilon| D) \rho|^2 \chi_0(|\epsilon| D) \rho\right) - |\rho|^2\rho] + |\epsilon|^{\frac{1}{2}} \underline{O}_1^{|\epsilon|} (\|\rho\|_{H^1}) \\
 =&  a_2 (a_1 - \sigma \partial_\beta^2 )^{-1} \left[\chi_0(|\epsilon| D) \left(|\chi_0(|\epsilon| D) \rho|^2 \chi_0(|\epsilon| D) \rho - |\rho|^2\rho\right) \right]\\
 & + a_2 (a_1 - \sigma \partial_\beta^2 )^{-1} \left[(\chi(|\epsilon|D)-I )|\rho|^2\rho \right] + |\epsilon|^{\frac{1}{2}} \underline{O}_1^{|\epsilon|} (\|\rho\|_{H^1}).
\end{align*}
It follows that by using the limit \eqref{ChiDLimit} and the fact that $H^{\frac{3}{4}}(\mathbb{R})$ is an algebra for the product, one gets for all $\rho \in B_R(0)$ uniformly, 
\begin{equation*}
\lim_{\epsilon \rightarrow 0} \|\mathcal{F}(\rho, \epsilon) - \mathcal{F}(\rho, 0)\|_{H^1} = 0, \quad \lim_{\epsilon \rightarrow 0} \|d_1\mathcal{F}(\rho, \epsilon) - d_1\mathcal{F}(\rho, 0)\|_{\mathcal{L}(H^1(\mathbb{R}, \mathbb{C}))} = 0.
\end{equation*}
Therefore, the fixed-point equation \eqref{FixedpointODE} converges to the equation $\mathcal{F}(\rho, 0) =0$, which is exactly the stationary focusing NLS equation \eqref{CubicShrodingerEqn}:
\begin{equation*}
    \left(\sqrt{\gamma^2-4\omega(g+\sigma \omega^2)}-\sigma \partial_{\beta}^2\right)\rho - \left(\frac{3\sigma}{2}|\omega|^4-\gamma^2|\omega|\right)|\rho|^2 \rho =0. 
\end{equation*}
For the existence of solutions of the NLS equation, it was proved in Section $5.2$ of \cite{MR4246394} the following result.
\begin{proposition}[\hspace{1sp}\cite{MR4246394}]
\begin{enumerate}
    \item  The  stationary focusing cubic nonlinear Schr\"odinger equation \eqref{CubicShrodingerEqn} has precisely two  nontrivial solutions  $\pm \rho_* \in H^1_e(\mathbb{R}; \mathbb{C})$, which are both real-valued even functions.
    \item $d_1\mathcal{F}[\pm \rho_*, 0]: \mathcal{X}\rightarrow \mathcal{X}$ is an isomorphism.
\end{enumerate}
\end{proposition}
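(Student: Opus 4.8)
The proposition is quoted from Groves \cite{MR4246394}, and the plan is to reproduce that argument in two steps: for (1), classify the homoclinic orbits of \eqref{CubicShrodingerEqn} by elementary ODE analysis; for (2), read off invertibility of the linearization from the classical spectral theory of the linearized cubic NLS operators. Throughout write $a_1$ and $a_2$ for the two positive coefficients in \eqref{CubicShrodingerEqn}, so the equation reads $-\sigma\rho'' + a_1\rho - a_2|\rho|^2\rho = 0$; here $a_1>0$ because $\omega<0$, and $a_2>0$ is the focusing condition \eqref{e:focusingConditionThm}, which the choice of $c_*$ in Section \ref{s:Approximate} has already secured.

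\emph{Part (1).} First I would reduce to real even profiles. Writing $\rho = P + iR$, membership in $\mathcal{X} = H^1_e(\mathbb{R};\mathbb{C})$ means $P$ is even and $R$ is odd, and both solve the same linear ODE $-\sigma u'' + \big(a_1 - a_2(P^2+R^2)\big)u = 0$; hence their Wronskian $P'R - PR'$ is constant, and since an $H^1(\mathbb{R})$ solution of the ODE and its derivative vanish at $\pm\infty$, that constant is $0$, so $P$ and $R$ are proportional. Combined with their opposite parities this forces $R\equiv 0$ or $P\equiv 0$; in the second case $\rho = iR$ with $R$ odd, so $R(0)=0$, and vanishing of the conserved energy $\tfrac{\sigma}{2}(R')^2 - \tfrac{a_1}{2}R^2 + \tfrac{a_2}{4}R^4$ then gives $R'(0)=0$ and hence $R\equiv 0$. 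So $\rho$ is real and even and solves $-\sigma\rho''+a_1\rho-a_2\rho^3=0$. Vanishing energy gives $(\rho')^2 = \tfrac{1}{\sigma}\rho^2\big(a_1-\tfrac{a_2}{2}\rho^2\big)$; the phase plane of this system has exactly two orbits homoclinic to the origin, one with $\rho>0$ and its reflection with $\rho<0$, and evenness centers the orbit at $\beta=0$, so the solution is unique up to sign. Integrating the separable equation produces the explicit profile $\rho_*(\beta) = \sqrt{2a_1/a_2}\,\operatorname{sech}\!\big(\sqrt{a_1/\sigma}\,\beta\big)$, which is the formula quoted in the theorem, and shows $\pm\rho_*$ are the only nontrivial solutions.

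\emph{Part (2).} From $\mathcal{F}(\rho,0) = \rho - a_2(a_1-\sigma\partial_{\beta}^2)^{-1}(|\rho|^2\rho)$ and the fact that $(a_1-\sigma\partial_{\beta}^2)\colon H^1\to H^{-1}$ is an isomorphism, I would reduce invertibility of $d_1\mathcal{F}[\pm\rho_*,0]V = V - a_2(a_1-\sigma\partial_{\beta}^2)^{-1}\big(2\rho_*^2V + \rho_*^2\bar V\big)$ to that of $A(V) := (a_1-\sigma\partial_{\beta}^2)V - a_2\big(2\rho_*^2V+\rho_*^2\bar V\big)$. Writing $V = P+iR$ with $P$ even and $R$ odd diagonalizes $A$: its real part is $L_+P$ with $L_+ := -\sigma\partial_{\beta}^2 + a_1 - 3a_2\rho_*^2$ acting on even functions, and its imaginary part is $L_-R$ with $L_- := -\sigma\partial_{\beta}^2 + a_1 - a_2\rho_*^2$ acting on odd functions. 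These are the standard linearized NLS operators, whose spectral structure I would invoke. Since $\rho_*^2$ decays, the essential spectrum of $L_\pm$ equals $[a_1,\infty)$ with $a_1>0$, so only finitely many eigenvalues lie below $a_1$. The soliton equation is $L_-\rho_*=0$ with $\rho_*>0$, so $0$ is the simple lowest eigenvalue of $L_-$, with an even eigenfunction; hence $L_-$ is bounded below by a positive constant, and invertible, on odd functions. Differentiating the soliton equation gives $L_+\rho_*'=0$ with $\rho_*'$ odd and changing sign exactly once, so by Sturm oscillation $0$ is the second eigenvalue of $L_+$; moreover $\ker L_+$ is one-dimensional, since the second linearly independent solution of $L_+u=0$ grows at infinity, hence $\ker L_+ = \operatorname{span}\{\rho_*'\}$. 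Consequently $L_+$ has no kernel on the even subspace, where the rest of its spectrum is either the single negative ground-state eigenvalue or $\geq a_1>0$; thus $L_+$ is invertible on even functions. Therefore $A$, and with it $d_1\mathcal{F}[\pm\rho_*,0]$, is an isomorphism, and the two signs give the same operator because $\rho_*^2 = (-\rho_*)^2$.

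\emph{Main obstacle.} Once the explicit $\operatorname{sech}$ profile is available, every step is routine except the non-degeneracy fact that $0$ is not an eigenvalue of $L_+$ on the even subspace, i.e.\ $\ker L_+ = \operatorname{span}\{\rho_*'\}$. For the one-dimensional cubic nonlinearity this can be verified directly — either by noting that the second solution of $L_+u=0$ is not square-integrable, or by identifying $L_+$ with an explicit P\"oschl--Teller operator and counting its bound states — but it is the one place where the precise form of the equation, rather than soft functional analysis, is used, so I would carry it out carefully and cite the classical references alongside \cite{MR4246394}.
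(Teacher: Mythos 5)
Your argument is correct and is essentially the same as the one the paper relies on: the paper itself offers no proof, deferring entirely to Section 5.2 of \cite{MR4246394}, and your reconstruction (Wronskian reduction to real even profiles, phase-plane integration to the explicit $\operatorname{sech}$ profile matching \eqref{FormulaRho}, then diagonalization of the linearization into the operators $L_{\pm}$ on the even/odd subspaces with the standard non-degeneracy facts $L_-\rho_*=0$, $L_+\rho_*'=0$) is precisely that argument. The one point you flag as delicate, namely $\ker L_+=\operatorname{span}\{\rho_*'\}$ in $L^2$, is handled correctly by your observation that the second solution of $L_+u=0$ grows at infinity.
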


Indeed, from \cite{MR0544892}, the explicit solutions of the  stationary focusing NLS equation \eqref{CubicShrodingerEqn} are given by $\{\rho_* e^{i\theta} \}_{\theta \in [0,2\pi)}$, where
\begin{equation}
 \rho_*(\beta) = \sqrt{\dfrac{2\sqrt{\gamma^2-4\omega(g+\sigma \omega^2)}}{\frac{3\sigma}{2}|\omega|^4-\gamma^2|\omega|}} \text{sech}\left( \sqrt{\dfrac{\sqrt{\gamma^2-4\omega(g+\sigma \omega^2)}}{\sigma}}\beta \right).  \label{FormulaRho}
\end{equation}
The choices $\theta = 0$ and $\pi$ give the two real-valued even solutions of \eqref{CubicShrodingerEqn}.
Note that $\rho_{*}$ vanishes at infinity.

Using the implicit function theorem Proposition \ref{t:ImplicitTwo}, we get the existence and approximation result:
\begin{proposition}
For each sufficiently small value of $\epsilon >0$, the equation \eqref{FixedpointODE}  has two small-amplitude solutions $\rho^\pm_{\epsilon} \in \chi_0(\epsilon D)H^1(\mathbb{R})$ such that $\|\rho^\pm_\epsilon \mp \rho_*\|_{H^1} \rightarrow 0$ as $\epsilon\rightarrow 0$, where $\rho_{*}$ given by \eqref{FormulaRho} is the positive even solution of the stationary focusing NLS equation \eqref{CubicShrodingerEqn}.
\end{proposition}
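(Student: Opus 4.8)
\emph{Proof proposal.} The plan is to obtain $\rho^{\pm}_{\epsilon}$ as the values of the mapping furnished by Proposition \ref{t:ImplicitTwo}, applied with Banach space $\mathcal{X} = H^1_e(\mathbb{R},\mathbb{C})$, parameter $\lambda = \epsilon$, function $\mathcal{F}(\rho,\epsilon) = \rho + H_{|\epsilon|}(\rho)$, and base point $x^{*} = \rho_{*}$ (and then, separately, $x^{*} = -\rho_{*}$). First I would record what has already been established above: by Lemma \ref{t:SymbolApprox} and the limit \eqref{ChiDLimit}, the fixed-point equation \eqref{FixedpointODE} is equivalent to $\mathcal{F}(\rho,\epsilon) = 0$, and $\mathcal{F}(\cdot,0) = 0$ is exactly the stationary focusing NLS equation \eqref{CubicShrodingerEqn}; moreover, the proposition on \eqref{CubicShrodingerEqn} cited above supplies the two facts needed at each base point, namely $\mathcal{F}(\pm\rho_{*},0) = 0$ and that $d_1\mathcal{F}[\pm\rho_{*},0] : \mathcal{X} \to \mathcal{X}$ is an isomorphism.

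Next I would verify the two remaining hypotheses of Proposition \ref{t:ImplicitTwo}. The continuity $\|d_1\mathcal{F}[\rho,0] - d_1\mathcal{F}[\pm\rho_{*},0]\|_{\mathcal{L}(\mathcal{X})} \to 0$ as $\rho \to \pm\rho_{*}$ holds because $\rho \mapsto d_1 H_0(\rho)$ is continuous: the cubic nonlinearity $\rho\mapsto|\rho|^2\rho$ is smooth on $H^1(\mathbb{R})$ (which embeds in $L^\infty$) and $(a_1 - \sigma\partial_\beta^2)^{-1}$ is bounded. For the limit in the parameter, I would reuse the splitting of $\mathcal{F}(\rho,0) - \mathcal{F}(\rho,\epsilon)$ into three pieces carried out above, each controlled via the algebra property of $H^{3/4}(\mathbb{R})$, the operator-norm convergence $\chi_0(\epsilon D) \to I$ in $\mathcal{L}(H^1,H^{3/4})$ from \eqref{ChiDLimit}, and the decay of $\epsilon^{1/2}\underline{O}_1^{\epsilon}(\|\rho\|_{H^1})$; this produces $\|\mathcal{F}(\rho,\epsilon) - \mathcal{F}(\rho,0)\|_{H^1} \to 0$ and $\|d_1\mathcal{F}[\rho,\epsilon] - d_1\mathcal{F}[\rho,0]\|_{\mathcal{L}(\mathcal{X})} \to 0$ as $\epsilon \to 0$, uniformly for $\rho$ in the ball $B_R(0)$. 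Proposition \ref{t:ImplicitTwo} then yields, for each sign, a neighbourhood $X^{\pm}$ of $\pm\rho_{*}$ in $\mathcal{X}$, a neighbourhood $\Lambda^{\pm}$ of $0$ in $\mathbb{R}$, and a map $h^{\pm} : \Lambda^{\pm} \to X^{\pm}$ continuous at the origin with $h^{\pm}(0) = \pm\rho_{*}$ and $\mathcal{F}(h^{\pm}(\epsilon),\epsilon) = 0$. Setting $\rho^{\pm}_{\epsilon} := h^{\pm}(\epsilon)$ for $\epsilon \in \Lambda^{\pm} \cap (0,\infty)$ gives solutions of $\rho + H_{\epsilon}(\rho) = 0$, hence of \eqref{FixedpointODE}, and continuity of $h^{\pm}$ at $0$ gives $\|\rho^{\pm}_{\epsilon} \mp \rho_{*}\|_{H^1} = \|h^{\pm}(\epsilon) - h^{\pm}(0)\|_{H^1} \to 0$ as $\epsilon \to 0$. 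Finally, the inclusion $\rho^{\pm}_{\epsilon} \in \chi_0(\epsilon D)H^1(\mathbb{R})$ is immediate from \eqref{FixedpointODE} itself: its right-hand side is the Fourier multiplier $\epsilon^2\big(\tilde{L}(|\omega| + \epsilon D) + \epsilon^2 a_1\big)^{-1}$ applied to $a_2\chi_0(\epsilon D)(|\rho|^2\rho) + \epsilon^{1/2}\underline{O}_0^{\epsilon}(\|\rho\|_{H^1})$, both summands of which already lie in the range of the projection $\chi_0(\epsilon D)$, a projection that commutes with that multiplier; smallness is clear since $\rho^{\pm}_{\epsilon}$ remains in $B_R(0)$.

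Essentially all of the analytic content has been front-loaded into the preceding sections, so the only genuine obstacle is establishing the two $\epsilon \to 0$ limits uniformly in $\rho$. This is exactly where the quantitative symbol estimate of Lemma \ref{t:SymbolApprox} — which replaces the $O(1)$ resolvent $\epsilon^2\big(\tilde{L}(|\omega| + \epsilon D) + \epsilon^2 a_1\big)^{-1}$ by $(a_1 - \sigma\partial_\beta^2)^{-1}$ up to an error small in the right topology — and the convergence \eqref{ChiDLimit} of the frequency cutoff to the identity are indispensable, together with the nondegeneracy of the NLS ground state that makes $d_1\mathcal{F}[\pm\rho_{*},0]$ invertible. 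With those inputs in hand, the rest is a routine invocation of the implicit function theorem.
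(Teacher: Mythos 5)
Your proposal is correct and follows essentially the same route as the paper: rewrite \eqref{FixedpointODE} as $\rho + H_{|\epsilon|}(\rho)=0$ via Lemma \ref{t:SymbolApprox} and \eqref{ChiDLimit}, verify the uniform $\epsilon\to 0$ limits for $\mathcal{F}$ and $d_1\mathcal{F}$, invoke the nondegeneracy of $\pm\rho_*$ from the cited NLS proposition, and conclude with Proposition \ref{t:ImplicitTwo}. The only difference is cosmetic — you make explicit the frequency-support observation for $\rho^\pm_\epsilon\in\chi_0(\epsilon D)H^1(\mathbb{R})$, which the paper leaves implicit.
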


Note that $\chi_0(\epsilon D)H^s(\mathbb{R}), s\geq 0$ are  all topologically equivalent.
Therefore, we have shown that 
\begin{equation*}
    U_1^+(\alpha) = \pm \epsilon \rho_* (\epsilon \alpha) e^{-i\omega \alpha} + o_{H^1}(\epsilon) \in \chi_0(\epsilon D)H^s(\mathbb{R}),\quad  s\geq 0,
\end{equation*}
where $\rho(\beta)$ solves the stationary focusing NLS equation \eqref{CubicShrodingerEqn}.
As a consequence,
\begin{equation*}
    \Im W =U = U_1^+ + U_1^- + U_2 =   \pm \epsilon \rho_* (\epsilon \alpha) (e^{i\omega \alpha} +  e^{-i\omega \alpha}) + o_{H^1}(\epsilon)
\end{equation*}
and $\Im W\in H^s(\mathbb R)$ for any $s\ge 0$,
since by Corollary \ref{t:UTwoHsExist}, $U_2\in H^s$.
To recover $W$ from $\Im W$, we use the fact that $W$ is a holomorphic function, so that
\begin{equation*}
    W = H\Im W + i\Im W = 2\nP \Im W,
\end{equation*}
which is two times the negative frequency projection of $\Im W$.
Therefore,
\begin{equation*}
     W =   \pm 2\epsilon \rho_* (\epsilon \alpha)e^{i\omega \alpha} + o_{H^1}(\epsilon) 
\end{equation*}
and $W\in H^s(\mathbb{R})$ for all $s\ge 0$.
The existence of solitary waves follows from Corollary \ref{t:RecoverSolution}.

\section{The range of parameters with two solitary waves}\label{s:parameterRegimes}

In this section, we compute a lower bound for the number of families of gravity-capillary solitary water wave solutions with nonzero vorticity and prove Proposition~\ref{p:RangeOfV}.
In this section,  the parameters $g$ and $\sigma$ are positive, and the vorticity $\gamma$ is nonzero.

We will do this by analyzing when the coefficient on the cubic term in~\eqref{UPlusOneReduced} is positive.
We will find an explicit formula for the value of the critical frequency $\omega$ that is closer to the origin in terms of the physical parameters of the problem. 
Then a combination of asymptotic arguments and computer-assisted root-finding algorithm will be used to describe when the stationary cubic NLS equation is focusing.

To simplify our computations, we rescale the space and time coordinates,
\begin{equation*}
    \tilde{\alpha}= \frac{\gamma^2\alpha }{g},\quad \tilde{t}=\gamma t
\end{equation*}
and choose the rescaled unknowns and velocity
\begin{equation*}
    \tilde{W}=\frac{\gamma^2 W}{g},\quad \tilde{Q}=\frac{\gamma^3 Q}{g^2},\quad \tilde{c}=\frac{\gamma c}{g}.
\end{equation*}

Using the rescaled variables and unknowns and dividing through by the common factor $\frac{g^2}{\gamma^2}$, $\tilde{U} = \Im \tilde{W}(\tilde{\alpha})$ solves the nondimensionalized Babenko equation
\begin{equation}
\begin{aligned}
    &(1+\tilde{c}-\tilde{c}^2|D|)\tilde{U} -V\left(\frac{\tilde{U}_\alpha}{J^{\frac 1 2}}\right)_\alpha+V|D|\left(\frac{1+|D|\tilde{U}}{J^{\frac 1 2}}\right)\\
    &=-\frac{1}{2}\tilde{U}^2-\tilde{U}|D|\tilde{U}-\frac 1 2 |D|\tilde{U}^2+\frac 1 2 \tilde{U}|D|\tilde{U}^2-\frac 1 2 \tilde{U}^2 |D| \tilde{U}-\frac 1 6 |D| \tilde{U}^3,\label{e:BabenkoEqnNondim}
\end{aligned}
\end{equation}
where $V$ denotes the dimensionless parameter
\begin{equation*}
    V=\frac{\sigma \gamma^4}{g^3},
\end{equation*}
which measures the relative strength of the vorticity.
Compared to the original Babenko equation \eqref{e:BabenkoEqnLR}, the coefficients of \eqref{e:BabenkoEqnNondim} no longer contain parameters $g, \sigma, \gamma$. 
It is then equivalent to consider the number of families of solutions for \eqref{e:BabenkoEqnNondim} given a fixed positive constant $V$.

To address the question of the sign of the coefficient on the cubic term in~\eqref{UPlusOneReduced} and prove Proposition~\ref{p:RangeOfV}, it will be easier to work in this dimensionless setting.

The condition \begin{equation*}
    \left(\frac{3\sigma}{2}|\omega|^3-\gamma^2\right)|\omega|>0
\end{equation*}
coming from~\eqref{UPlusOneReduced}
becomes
\begin{equation}
    \left(\frac{3V}{2}|\tilde{\omega}|^3-1\right)|\tilde{\omega}|>0\label{e:focusingConditionNondim}
\end{equation}
in nondimensional coordinates, where the two possible values of $\tilde{\omega}$ in this second condition are the frequencies at which the nondimensionalized wave velocities $\tilde{c}^{\pm}(k)$ have critical points.
In other words, $\tilde{\omega}$'s are the critical points of
\begin{equation*}
    \tilde{c}^{\pm}(k)=\frac{-1\pm \sqrt{1-4(1+Vk^2)}}{2k}, \quad k<0
\end{equation*}
(See ~\eqref{Defcpm} for comparison).
A direct computation shows these (possible) $\tilde{\omega}$'s are the real roots of the quartic equation
\begin{equation*}
    V^2k^4-2Vk^2+2Vk+1=0, \quad k<0.
\end{equation*}
Letting $G=V^{-1}$ and dividing by $V^2$, we need to find the negative real roots of the quartic polynomial
\begin{equation}
    P(k) = k^4-2Gk^2+2Gk+G^2, \quad k<0.\label{e:quarticG}
\end{equation}

As a quartic equation, we can find its roots, and thus the possible values of $\tilde\omega$ by radicals (see e.g.~\cite[Chapter 12]{cox2012galois}).
The four roots of $P(k)$ are given by the roots of the two quadratic equations
\begin{equation}
    x^2-\sqrt{2y_0}x-G+y_0+\frac{G}{\sqrt{2y_0}}=0,\label{e:quadraticNegativeDiscriminant}
\end{equation}
and
\begin{equation}
    x^2+\sqrt{2y_0}x-G+y_0-\frac{G}{\sqrt{2y_0}}=0,\label{e:quadraticPositiveDiscriminant}
\end{equation}
where $y_0$ is any root of the resolvent cubic of $P(k)$, namely, $y_0$ is any root of
\begin{equation*}
    R(y)=8y^3-16Gy^2-4G^2=0.
\end{equation*}

Solving the resolvent cubic by radicals and making the intermediate definitions
\begin{equation}
\begin{aligned}
    z_0&=32G^3+27G^2+3\sqrt 3\sqrt{27G^4+64G^5},\quad z_1=2^{\frac{5}{3}}z_0^{-\frac{1}{3}}G,
\end{aligned}\label{e:intermediateDefinitionsQuartic1}
\end{equation} we see that
\begin{equation*}
    y_0=\frac{2G}{3}\left(1+z_1+z_1^{-1}\right) >0
\end{equation*}
is a real root of $R(y)$.

Since $z_0$ and $z_1$ are always positive for $0<G<\infty$, we have $y_0\ge 2G$.
This means the discriminant of~\eqref{e:quadraticNegativeDiscriminant}
\begin{equation*}
    -2y_0+4G-\frac{4G}{\sqrt{2y_0}}\le -4G+4G-\frac{\sqrt{4G}}{\sqrt{2y_0}}<0,
\end{equation*}
so that the first quadratic equation has no real roots.
Since $P(0)=G^2>0$, $P(-\sqrt G)=-2G^{\frac{3}{2}}<0$, and $P(k)$ is positive for large enough negative $k$, the intermediate value theorem guarantees that $P(k)$ has two negative real roots.
These must be the roots of~\eqref{e:quadraticPositiveDiscriminant}, so that we have
\begin{equation}
    \tilde\omega_{\pm}=\frac{-\sqrt{2y_0}\pm\sqrt{-2y_0+4G+\frac{4G}{\sqrt{2y_0}}}}{2}\label{e:omegaFormula}
\end{equation}
as the formula for the two critical frequencies.

As shown in Section~\ref{s:Approximate}, the frequency further from the origin, i.e. $\tilde{\omega}_{-}$ always satisfies the desired focusing condition to get a focusing cubic NLS approximation.
Hence, we only need to look at the closer of the two roots to the origin.
The root closer to the origin is $\tilde{\omega}_+$, which may or may not satisfy the focusing condition \eqref{e:focusingConditionNondim} depending on the value of $V$.

Letting \begin{equation}
    f(V)=\frac{3}{2}V |\tilde{\omega}_+|^3-1 = -\frac{3}{2}V\tilde{\omega}_+^3-1, 
\end{equation}
we will prove Proposition~\ref{p:RangeOfV} in three steps, handling the cases of small $V$ and large $V$ by studying the asymptotics of~\eqref{e:omegaFormula} and handling the case of intermediate $V$ using interval arithmetic.

 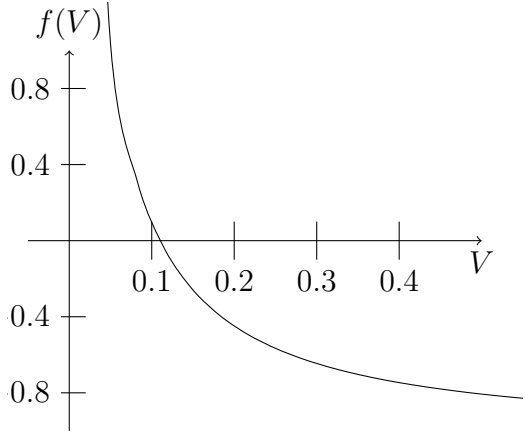
\begin{figure}
   \centering

   \begin{tikzpicture}

   %\pgfmathsetmacro{\z0}{32*x^3+27*x^2+3*sqrt(3)*sqrt(64*x^5+27*x^4)}
   
     \begin{axis}[xmin=-0.075, xmax=0.55, ymin=-1, ymax=1.25, axis x
       line = none, axis y line = none, samples=300, smooth ]

       \addplot+[mark=none, draw=black, stack plots=y]
       {(-((-sqrt(2)*sqrt(2*(1/x)/3*(((1/x)*2^(5/3)*(32*(1/x)^3+27*(1/x)^2+3*sqrt(3)*sqrt(64*(1/x)^(5)+27*(1/x)^4))^(-1/3))+((1/x)*2^(5/3)*(32*(1/x)^3+27*(1/x)^2+3*sqrt(3)*sqrt(64*(1/x)^(5)+27*(1/x)^4))^(-1/3))^(-1)+1))+sqrt(-2*(2*(1/x)/3*(((1/x)*2^(5/3)*(32*(1/x)^3+27*(1/x)^2+3*sqrt(3)*sqrt(64*(1/x)^(5)+27*(1/x)^4))^(-1/3))+((1/x)*2^(5/3)*(32*(1/x)^3+27*(1/x)^2+3*sqrt(3)*sqrt(64*(1/x)^(5)+27*(1/x)^4))^(-1/3))^(-1)+1))+4*(1/x)+4*(1/x)/sqrt(2*(2*(1/x)/3*(((1/x)*2^(5/3)*(32*(1/x)^3+27*(1/x)^2+3*sqrt(3)*sqrt(64*(1/x)^(5)+27*(1/x)^4))^(-1/3))+((1/x)*2^(5/3)*(32*(1/x)^3+27*(1/x)^2+3*sqrt(3)*sqrt(64*(1/x)^(5)+27*(1/x)^4))^(-1/3))^(-1)+1)))))/2)^3)*3/(1/x)/2-1};
       %\addplot+[mark=non, stack plots=y]{(-x)^(1/3)};

       % z1 = x*2^(5/3)*(32*x^3+27*x^2+3*sqrt(3)*sqrt(64*x^(5)+27*x^4))^(-1/3)

       \draw[->] (axis cs:0,-1) -- (axis cs:0,1) node[above] {\(f(V)\)};
       \draw[->] (axis cs:-0.05,0) -- (axis cs:0.5,0) node[below] {\(V\)};
       \draw[-] (axis cs:0.1,0.1) -- (axis cs:0.1,-0.1) node[below] {\(0.1\)};
       \draw[-] (axis cs:0.2,0.1) -- (axis cs:0.2,-0.1) node[below] {\(0.2\)};
       \draw[-] (axis cs:0.3,0.1) -- (axis cs:0.3,-0.1) node[below] {\(0.3\)};
       \draw[-] (axis cs:0.4,0.1) -- (axis cs:0.4,-0.1) node[below] {\(0.4\)};
       \draw[-]  (axis cs:0.02,0.4) -- (axis cs:-0.01,0.4) node[left] {\(0.4\)};
              \draw[-]  (axis cs:0.02,0.8) -- (axis cs:-0.01,0.8) node[left] {\(0.8\)};
                     \draw[-]  (axis cs:0.02,-0.4) -- (axis cs:-0.01,-0.4) node[left] {\(-0.4\)};
       \draw[-]  (axis cs:0.02,-0.8) -- (axis cs:-0.01,-0.8) node[left] {\(-0.8\)};

    \end{axis}
  \end{tikzpicture}
   \caption{The graph of $f(V)$ for $V$ near zero, showing the existence of a root $V_*\approx 0.11$.}
 \end{figure}
 
\begin{lemma}\label{l:largeVsmallG}
    When $V>10^{6}$, the function $f(V)$ is strictly less than $0$.
\end{lemma}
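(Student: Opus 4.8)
The plan is to sidestep the radical formula~\eqref{e:omegaFormula} in this regime and instead use only the crude interval localization of $\tilde\omega_+$ that is already available from the intermediate value argument just before the lemma. Write $G=V^{-1}$, so $V>10^{6}$ is the same as $0<G<10^{-6}$. Recall from that discussion that $\tilde\omega_+$ is the root of $P(k)=k^4-2Gk^2+2Gk+G^2$ closest to the origin; since $P(0)=G^2>0$ while $P(-\sqrt G)=-2G^{3/2}<0$, there is a real root in $(-\sqrt G,0)$, and since $P(k)\to+\infty$ as $k\to-\infty$ the second (and, by~\eqref{e:quadraticNegativeDiscriminant}, only other) real root lies in $(-\infty,-\sqrt G)$. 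Hence the closer-to-origin root satisfies $\tilde\omega_+\in(-\sqrt G,0)$, i.e. $0<|\tilde\omega_+|<\sqrt G=V^{-1/2}$.

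\textbf{Steps.} First, establish the localization $|\tilde\omega_+|<V^{-1/2}$ as above (this is essentially a restatement of facts already proved in the text). Second, insert it into the definition $f(V)=\tfrac32 V\,|\tilde\omega_+|^3-1$ and use $|\tilde\omega_+|^3<G^{3/2}=V^{-3/2}$ to get
\[
f(V)=\tfrac32 V\,|\tilde\omega_+|^3-1<\tfrac32 V\cdot V^{-3/2}-1=\frac{3}{2\sqrt V}-1<\frac{3}{2\sqrt{10^{6}}}-1=\frac{3}{2000}-1<0,
\]
which is the assertion. (The same estimate in fact yields $f(V)<0$ for every $V>\tfrac94$; the generous constant $10^{6}$ only has to dominate the right endpoint of the interval-arithmetic window later used for intermediate $V$.)

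\textbf{Main obstacle.} There is essentially no analytic difficulty here; the only point that must be made explicit is that the critical frequency nearest the origin scales like $|\tilde\omega_+|\sim G/2=O(V^{-1})$ rather than $O(V^{-1/3})$ — which is precisely why the cubic power in $f(V)$ overwhelms the single factor of $V$ — but for the inequality itself the soft bound $|\tilde\omega_+|<\sqrt G$ suffices and no sharp asymptotics are needed. If one instead wishes to follow the asymptotic route indicated in the text, one would expand~\eqref{e:omegaFormula} using $y_0=2^{-1/3}G^{2/3}+\tfrac23 G+O(G^{4/3})$, obtained from the resolvent cubic written as $2y^3-4Gy^2-G^2=0$, to conclude $\tilde\omega_+=-\tfrac{G}{2}+O(G^2)$ and hence $f(V)=-1+O(G^2)$; this also gives $f(V)<0$ once $G$ is small, but requires more bookkeeping than the interval localization above.
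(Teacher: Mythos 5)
Your proof is correct, and it takes a genuinely different and more elementary route than the paper's. The paper proves the lemma by asymptotically expanding the radical formula \eqref{e:omegaFormula}: it propagates two-sided bounds through $z_0$, $z_1$, $y_0$ and the discriminant to localize $\tilde\omega_+$ in an interval of width $O(G^{1/3})$ and then evaluates $f$. You instead use only the sign pattern $P(0)=G^2>0$, $P(-\sqrt G)=-2G^{3/2}<0$, $P(k)\to+\infty$ as $k\to-\infty$ (already recorded in the paper) together with the fact that $P$ has exactly two real roots (from the negative discriminant of \eqref{e:quadraticNegativeDiscriminant}) to place the near root in $(-\sqrt G,0)$; the bound $\tfrac32 V|\tilde\omega_+|^3<\tfrac32 V\cdot V^{-3/2}=\tfrac32 V^{-1/2}$ then finishes the argument, and in fact gives $f(V)<0$ for all $V>9/4$, which would even allow shrinking the interval-arithmetic window used for intermediate $V$. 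What your approach gives up is any sharp description of $\tilde\omega_+$; what it buys is brevity and robustness. I will add that your side remark on the true scaling is right and the paper's displayed localization is not: from \eqref{e:quarticG} the near root balances $2Gk+G^2$, so $\tilde\omega_+=-\tfrac{G}{2}+O(G^2)$, whereas the interval $\left(-\tfrac{2^{1/3}}{\sqrt3}G^{1/3},\,-\tfrac{1}{\sqrt3}G^{1/3}\right)$ asserted at the end of the paper's proof is of the much larger order $G^{1/3}$ and contains neither real root of $P$ (the far root is $\approx-(2G)^{1/3}$); the leading $G^{1/3}$ contributions of $-\sqrt{2y_0}$ and of the square root of the discriminant in \eqref{e:omegaFormula} cancel for $\tilde\omega_+$, and the paper's bookkeeping does not capture this. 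The paper's final sign conclusion survives because any bound of the form $|\tilde\omega_+|\le cG^{1/3}$ with $c^3<2/3$ suffices, but your argument bypasses this delicate cancellation entirely.
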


\begin{proof}
The case $V>10^6$ corresponds to $0<G<10^{-6}$.
In this case, we have for the intermediate quantity $z_0$ from~\eqref{e:intermediateDefinitionsQuartic1}
\begin{equation*}
    54G^2<z_0<64G^2, \quad  2^{-\frac 1 3}G^{\frac 1 3}<z_1<\frac{2^{\frac{4}{3}}}{3}G^{\frac 1 3},
\end{equation*}
so that
\begin{align*}
    \frac{2^{\frac 4 3}}{3}G^{\frac 2 3}<&y_0<\frac{2G}{4}+\frac{2^{\frac{4}{3}}}{3}G^{\frac 2 3}+\frac{2^{\frac{7}{3}}}{9}G^{\frac{4}{3}},\\
    \frac{8}{3}G-\frac{2^{\frac{7}{3}}}{3}G^{\frac 2 3}-\frac{2^{\frac{10}{3}}}{9}G^{\frac{4}{3}}&<-2y_0+4G<\frac{8}{3}G-2^{\frac 2 3}G^{\frac 2 3}-\frac{2^{\frac 5 3}}{3}G^{\frac 4 3}.
\end{align*}

Since $0<G<10^{-6}$, terms with smaller exponents on $G$ will be larger.
By adjusting the coefficients slightly, any remaining terms can be absorbed by the dominant term.
In particular, 
\begin{equation*}
    \frac{2^{\frac 4 3}}{3}G^{\frac 2 3}<y_0<\frac{2^{\frac{5}{3}}}{3}G^{\frac 2 3},\quad
    -2G^{\frac 2 3}<-2y_0+4G<-2^{\frac 1 3} G^{\frac 2 3}.
\end{equation*}
Plugging these bounds into~\eqref{e:omegaFormula} and again absorbing lower-order terms into the constants,
\begin{equation*}
    -\frac{2^{\frac{1}{3}}}{\sqrt 3}G^{\frac{1}{3}}<\tilde{\omega}_+<-\frac{1}{\sqrt{3}}G^{\frac{1}{3}}. 
\end{equation*}
As a consequence, when $0<G<10^{-6}$, the function $f(V)$ is strictly negative.
\end{proof}

\begin{lemma}\label{l:smallVlargeG}
    When $0<V<10^{-6}$, the function $f(V)$ is strictly greater than $0$.
\end{lemma}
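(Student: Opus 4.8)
The plan is to sidestep the delicate asymptotics one would need to extract from the radical formula~\eqref{e:omegaFormula}, and instead pin down $\tilde\omega_+$ directly as a root of the quartic $P$ in~\eqref{e:quarticG}. With $G=V^{-1}>10^6$, the inequality $f(V)>0$ is equivalent to $|\tilde\omega_+|^3>\tfrac{2}{3}V^{-1}=\tfrac{2G}{3}$, i.e., since $\tilde\omega_+<0$, to $\tilde\omega_+<-(2G/3)^{1/3}$.

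First I would record the sign pattern of $P$: since the quadratic~\eqref{e:quadraticNegativeDiscriminant} has no real roots, $P(k)=(k-\tilde\omega_-)(k-\tilde\omega_+)\,q(k)$ with $q>0$ on $\mathbb{R}$ and $\tilde\omega_-<\tilde\omega_+<0$, so $P<0$ exactly on the open interval $(\tilde\omega_-,\tilde\omega_+)$; moreover $P(-\sqrt G)=-2G^{3/2}<0$ puts $-\sqrt G$ strictly between $\tilde\omega_-$ and $\tilde\omega_+$. Next I would set $m:=(2G/3)^{1/3}$, so that $G=\tfrac32 m^3$, and substitute into~\eqref{e:quarticG} to obtain the clean identity
\begin{equation*}
P(-m)=m^4-2Gm^2-2Gm+G^2=m^4\Bigl(\tfrac94 m^2-3m-2\Bigr).
\end{equation*}
For $G>10^6$ one has $m=(2G/3)^{1/3}>87$, well past the positive root $\tfrac23(1+\sqrt3)\approx1.82$ of $\tfrac94 m^2-3m-2$, so $P(-m)>0$. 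Since $m<\sqrt G$ (as $G>\tfrac49$), the point $-m$ satisfies $-m>-\sqrt G>\tilde\omega_-$; being a point where $P>0$ it lies outside the closed interval $[\tilde\omega_-,\tilde\omega_+]$, and being to the right of $\tilde\omega_-$ it must therefore lie to the right of $\tilde\omega_+$, i.e. $-m>\tilde\omega_+$. Hence $\tilde\omega_+<-(2G/3)^{1/3}$, i.e. $|\tilde\omega_+|^3>\tfrac{2G}{3}$, and therefore $f(V)=\tfrac32 V|\tilde\omega_+|^3-1>\tfrac32 V\cdot\tfrac{2G}{3}-1=0$.

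The only computations involved are elementary — that $\tfrac94 m^2-3m-2>0$ for $m>\tfrac23(1+\sqrt3)$, and that $(2\cdot10^6/3)^{1/3}>87$ — so the argument is short. I expect the real obstacle to be resisting the temptation to mimic the proof of Lemma~\ref{l:largeVsmallG} by plugging $G\to\infty$ expansions of $z_0,z_1,y_0$ into~\eqref{e:omegaFormula}: there the discriminant $-2y_0+4G+\tfrac{4G}{\sqrt{2y_0}}$ is the sum of an $O(1)$ term $-2y_0+4G$ and an $O(\sqrt G)$ term, so one would need to control $z_1-1$ to relative precision $O(G^{-1/2})$ merely to confirm the discriminant stays positive. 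Evaluating $P$ at $-(2G/3)^{1/3}$ avoids all of this; indeed the same argument gives $f(V)>0$ for every $G>\tfrac49(1+\sqrt3)^3\approx9.1$, hence for all $0<V<10^{-6}$ with room to spare.
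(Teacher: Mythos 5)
Your proof is correct, and it takes a genuinely different and more elementary route than the paper's. The paper proves this lemma by tracking the $G\to\infty$ asymptotics of the radical solution: it bounds the intermediate quantities $z_0,z_1,y_0$ from~\eqref{e:intermediateDefinitionsQuartic1} via Taylor/Lagrange estimates and feeds those bounds into~\eqref{e:omegaFormula} to conclude $-\sqrt{G}<\tilde\omega_+<-\tfrac{99}{100}\sqrt G+G^{1/4}$, whence $f(V)>0$. You instead reformulate $f(V)>0$ as the location statement $\tilde\omega_+<-(2G/3)^{1/3}$ and verify it by evaluating the quartic~\eqref{e:quarticG} at the single test point $-m$ with $m=(2G/3)^{1/3}$: the identity $P(-m)=m^4\bigl(\tfrac94m^2-3m-2\bigr)$, together with the sign pattern of $P$ (negative exactly on $(\tilde\omega_-,\tilde\omega_+)$, which follows from the paper's own observation that~\eqref{e:quadraticNegativeDiscriminant} has no real roots, plus $P(-\sqrt G)<0$), settles everything. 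This buys two things. First, it avoids the radicals and the error-propagation entirely, and all the steps are one-line algebra. Second, it is sharper: your argument gives $f(V)>0$ for all $G>\tfrac49(1+\sqrt3)^3$, i.e. for all $V<9/\bigl(4(1+\sqrt3)^3\bigr)\approx 0.1103356$, which is precisely the interval $[0.110335,0.110336]$ the paper later locates numerically for $V_*$ — in other words, your test-point identity implicitly produces the closed form $V_*=9/\bigl(4(1+\sqrt3)^3\bigr)$ (the critical case being $\tfrac94m^2-3m-2=0$), something the paper only obtains by interval arithmetic. The paper's asymptotic approach, by contrast, yields quantitative two-sided bounds on $\tilde\omega_+$ itself, which is more information than the one-sided bound you need but is not required for this lemma.
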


\begin{proof}
The case $0<V<10^{-6}$ corresponds to $G>10^6$.
Let $a=32G^3$ and $b=27G^2+3\sqrt{3}\sqrt{27G^4+64G^5}$. 
The intermediate quantity $z_1$ from~\eqref{e:intermediateDefinitionsQuartic1} is equal to \begin{equation*}
       z_1 = \frac{a^{\frac{1}{3}}}{(a+b)^{\frac{1}{3}}}=\left(1+\frac{b}{a}\right)^{-\frac{1}{3}}.
    \end{equation*}
    For $G>10^6$, we know that $0<\frac{b}{a}<\min \{\frac{1}{500}, 2G^{-\frac{1}{2}} \}$.
    Applying the Lagrange error bound to the function $(1+x)^{-\frac{1}{3}}$ on the interval $\left[0,\frac{1}{500}\right]$, we conclude that for $G>10^6$,
    \begin{equation*} 
        1-\frac{b}{3a}<z_1<1-\frac{b}{3a}+\frac{4}{9}\left(\frac{b}{a}\right)^2.
    \end{equation*}

    To analyze $y_0$ and $-2y_0+4G$, we  expand the function $x+x^{-1}$ in Taylor series around $x=1$ and apply the Lagrange error bound on the interval $\left[1-\frac{b}{3a},1\right]$.
    Then, 
    \begin{equation*}
        2<z_1+z_1^{-1}<2+\frac{b^2}{9a^2}+ \frac{1}{27}\left(1- \frac{b}{3a}\right)^{-3}\left(\frac{b}{a}\right)^3.
    \end{equation*}
    Absorbing the last term into the second for simplicity since $\frac{b}{a}$ is small, we can take
    \begin{equation*}
        2<z_1+z_1^{-1}<2+\frac{b^2}{8a^2}
    \end{equation*}
    so that
    \begin{equation*}
        2G<y_0<2G+\frac{1}{12},\quad
        -\frac{3}{4}<-2y_0+4G<0.
    \end{equation*}

Plugging these into the formula~\eqref{e:omegaFormula} and using the assumption that $G>10^6$ to absorb the small constants by changing the coefficient on powers of $G$, we get
    \begin{equation*}
        -\sqrt{G}<\tilde{\omega}_+<-\frac{99}{100}\sqrt{G}+G^{\frac{1}{4}}.
    \end{equation*}
    Therefore, when $G>10^{6}$,  the function $f(V)$ is strictly positive.
\end{proof}

\begin{lemma}
    On the interval $\left[10^{-6},10^{6}\right]$, the function $f(V)$ has exactly one real root $V_*$ of multiplicity one that lies in the interval $[0.110335, 0.110336]$.
\end{lemma}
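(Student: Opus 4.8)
The plan is a rigorous computer-assisted argument. Since the companion Lemmas~\ref{l:largeVsmallG} and~\ref{l:smallVlargeG} already handle $V>10^{6}$ and $0<V<10^{-6}$, it remains only to study $f$ on the compact interval $[10^{-6},10^{6}]$, on which $f$ is given explicitly in terms of $V$ by the closed forms~\eqref{e:omegaFormula} and~\eqref{e:intermediateDefinitionsQuartic1}. First I would record that $f$ extends to a real-analytic function of $V$ on a neighbourhood of $[10^{-6},10^{6}]$: for $G=V^{-1}\in(0,10^{6})$ one has $z_0>0$, $z_1>0$, and $y_0>0$ (as noted in the text), while $-2y_0+4G+\tfrac{4G}{\sqrt{2y_0}}>0$ under the outer radical in~\eqref{e:omegaFormula}, since by the discriminant analysis after~\eqref{e:quarticG} the quartic $P$ has exactly two negative real roots, namely the roots of~\eqref{e:quadraticPositiveDiscriminant}. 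Consequently $\tilde{\omega}_+$, hence $f(V)=-\tfrac{3}{2}V\tilde{\omega}_+^{3}-1$, is a well-defined smooth function, and its derivative $f'$ can be obtained in closed form, or by automatic differentiation, via the chain rule through~\eqref{e:omegaFormula} and~\eqref{e:intermediateDefinitionsQuartic1}. One should also note that the branch of radicals in~\eqref{e:omegaFormula} consistently picks out the root nearer the origin throughout the interval, which follows from $P(0)>0$ and $P(-\sqrt G)<0$.

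The core step is to show $f$ is strictly monotone on $[10^{-6},10^{6}]$. I would partition this interval into finitely many closed subintervals $I_1,\dots,I_N$ (logarithmically spaced, then refined adaptively) and, using interval arithmetic with outward rounding, compute for each $I_j$ an enclosure of $f'(I_j)$; to combat the dependency problem produced by the nested cube-roots-inside-square-roots I would use centered (mean-value) forms, $f'(I_j)\subseteq f'(m_j)+f''(I_j)\,(I_j-m_j)$ with $m_j$ the midpoint of $I_j$, and rewrite the intermediate quantities to reduce repeated occurrences of $G$. Verifying that every such enclosure lies in $(-\infty,0)$ shows $f'<0$ on all of $[10^{-6},10^{6}]$, so $f$ is strictly decreasing there; in particular $f$ has at most one zero and any zero is simple. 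For existence and localization, evaluate $f$ on thin intervals about $V=0.110335$ and $V=0.110336$ to obtain $f(0.110335)>0$ and $f(0.110336)<0$; the intermediate value theorem then produces a zero $V_*\in[0.110335,0.110336]$, and strict monotonicity makes it the unique zero and simple.

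If certifying $f'<0$ over the entire range proves too costly because the overestimation swamps the bound on wide subintervals, I would instead fall back to bounding $f$ itself: compute an enclosure of $f(I_j)$ on each piece of the partition, verify $0\notin f(I_j)$ for every $I_j$ lying outside a narrow window $[V_0,V_1]\ni 0.1103$, and carry out the $f'<0$ check only on the (finitely many, short) subintervals covering that window. Strict monotonicity on $[V_0,V_1]$ together with the absence of zeros outside it yields exactly one zero on $[10^{-6},10^{6}]$, simple by $f'\neq0$, and the endpoint evaluations again pin it into $[0.110335,0.110336]$.

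The main obstacle is computational rather than conceptual. The closed form for $\tilde{\omega}_+$ nests a cube root inside a square root inside another square root, so a naive interval evaluation of $f$ or $f'$ overestimates badly, particularly on wide subintervals and near the ends of the range, where $G$ spans twelve orders of magnitude and several intermediate terms nearly cancel; controlling the overestimation demands adaptive subdivision, centered forms, algebraic simplification of the intermediate expressions, and a certified implementation of interval arithmetic (exact rational arithmetic, or verified outward-rounded floating point as in packages such as INTLAB or Arb). A further point of care is that near $V=0.110335$ and $V=0.110336$ the relevant enclosures of $f$ are tiny intervals close to $0$, so the subdivision must be made fine enough there to resolve the sign of $f$ and the strict negativity of $f'$.
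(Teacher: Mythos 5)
Your proposal is correct and is essentially the same computer-assisted argument as the paper's: reduce to the compact interval, use rigorous interval arithmetic to certify that $f'$ is bounded away from zero (hence uniqueness and simplicity), and localize the root in $[0.110335,0.110336]$. The only difference is cosmetic — the paper invokes the interval Newton operator (via the \texttt{IntervalRootFinding} package), whose convergence theorem packages together the existence, uniqueness, and enclosure steps that you obtain separately from monotonicity plus the intermediate value theorem.
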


\begin{proof}[Description of proof]
By the intermediate value theorem and the two preceding lemmas, $f(V)$ must have at least one real root in the interval $[10^{-6},10^6]$.
Since we have reduced to searching in a finite interval, we can use interval arithmetic to find all the roots of $f$ and compute narrow intervals in which they must lie by using an interval version of Newton's method.

The interval arithmetic form of Newton's method produces a nested sequence of closed intervals in which the root(s) of $f$ can lie.
At each step, the full range of possible endpoints of Newton steps starting at the midpoint of the interval is constructed.
By taking the intersection of this set with the interval, the procedure avoids divergence.
When $f'(V)$ is bounded away from $0$ on some interval $[a,b]$, the sequence of intervals produced  is guaranteed to converge to the unique root of $f(V)$ on $[a,b]$, allowing an arbitrarily good approximation (see, for example,~\cite[Theorem 8.1]{IntervalArithmetic}).

Our numerical computation was done  using the \verb|IntervalRootFinding| package from the \textit{JuliaIntervals} project~\cite{IntervalArithmetic} in the Julia language~\cite{bezanson2017julia}, searching for roots on the slightly larger interval $\left[10^{-7},10^7\right]$.
As part of the computation, the fact that $f'(V)$ is bounded away from zero on $\left[10^{-7},10^7\right]$ is verified, so the root is shown to be the single root of $f(V)$ on this interval.
We thus find that there is a unique root of $f(V)$ in the interval $\left[10^{-6},10^6\right]$ and that it lies in the interval $[0.110335, 0.110336]$.
\end{proof}

Since $f$ is continuous and has exactly one positive root, for $V<V_*$ the condition~\eqref{e:focusingConditionNondim} is met for both critical frequencies $\tilde{\omega}^\pm$ and thus the approximating stationary cubic NLS equation is focusing in both cases, giving rise to two families of solitary waves with velocities near $\tilde{c}^\pm$.
For $V>V_*$, the condition is met for only one of the critical frequencies. 
Hence,  our construction only finds one group of solitary waves, corresponding to favorable vorticity.
This completes the proof of Proposition~\ref{p:RangeOfV}.

\bibliographystyle{plain} 
\bibliography{refs}
\end{document}